\newtheorem{assumption}{Assumption}
\newtheorem{example}{Example}
\newtheorem{corollary}{Corollary}
\newtheorem{proposition}{Proposition}
\newtheorem{theorem}{Theorem}
\newtheorem{remark}{Remark}
\newtheorem{lemma}{Lemma}
\newcommand{\bee}{\begin{equation}\begin{aligned}}
	\newcommand{\ee}{\end{aligned}\end{equation}}
\newcommand{\beee}{\begin{eqnarray}}
	\newcommand{\eee}{\end{eqnarray}}
\newcommand{\indep}{\perp\!\!\!\perp}
\newcommand*\bigcdot{\mathpalette\bigcdot@{.5}}
\newcommand{\norm}[1]{\left\lVert #1 \right\rVert}
\newcommand{\argmax}{\operatornamewithlimits{argmax}}
\newcommand{\proofend}{\hfill$\blacksquare$}
\newcommand*{\addFileDependency}[1]{
  \typeout{(#1)}
  \@addtofilelist{#1}
  \IfFileExists{#1}{}{\typeout{No file #1.}}
}
\begin{document}
\title{{\textbf{\Large{}{}{}{}Adversarial Estimation of Assortment Probabilities under Independence Structure}}}
\author{Alexandre Belloni\thanks{Fuqua School of Business, Duke University. Email: abn5@duke.edu} \thanks{Corresponding author.}, $\ $ Yan Chen\thanks{Fuqua School of Business, Duke University. Email: yc555@duke.edu} $\ $ and $\ $ Matthew Harding\thanks{Department of Economics,
University of California, Irvine. Email: matthew.harding@uci.edu}}
\maketitle

\vspace{-1.0cm}
\onehalfspacing
\begin{abstract}
We consider the problem of estimating assortment probabilities, which is common in operations management applications, including product bundling, advertising, etc. Existing approaches typically model each assortment as a category and apply multinomial models to estimate the choice probabilities; while computationally convenient, these methods do not exploit independence structures in the joint distribution and may therefore be statistically inefficient when the total number of items is large. Using the representation from \citet{bahadur1959representation}, we relate the sparsity of the generalized correlation coefficients to the independence structure of the binary components. We formulate the problem as estimating a high-dimensional vector of generalized correlation coefficients, together with low or moderate-dimensional nuisance parameters corresponding to the marginal probabilities. We develop a regularized adversarial estimator that attains the optimal rate under standard regularity conditions while remaining computationally feasible. The framework naturally extends to settings with covariates. We apply the proposed estimators to causal inference with multiple binary treatments and show substantial finite-sample improvements over non-adaptive methods. Numerical studies corroborate the theoretical results. 
\end{abstract}
\noindent%
{\it Keywords:} High-dimensional parameter of interest; moderate/low-dimensional nuisance parameter; localized regularized adversarial estimators; many binary components. 
\vfill

\onehalfspacing
% Text of your paper here
\section{Introduction}\label{sec-intro}
Many empirical applications involve estimating assortment probabilities, such as product bundle purchases \citep[e.g.][]{chung2003general,gabel2022product,blanchet2016markov,talluri2004revenue}, clinical trials \citep[e.g.][]{gueorguieva2001correlated,agresti2000strategies}, advertising \citep[e.g.][]{aouad2021display,li2025online} and causal inference \citep[e.g.][]{imbens2015causal,arkhangelsky2021double,fitzmaurice2008longitudinal}. In many applications, the underlying assortment distribution exhibits (partial) independence across products or treatments. For example, in retail demand models, purchase decisions are sometimes approximately independent across product categories \citep[e.g.][]{donnelly2021counterfactual}. In marketing, only a limited set of interaction effects is typically modeled across media channels or competing ads, implying independence across groups of exposures \citep[e.g.][]{zantedeschi2017measuring, danaher2008effect}. In clinical trials, certain combinations of drugs or therapies are designed to act independently \citep[e.g.][]{pomeroy2022drug, plana2022independent}. To estimate assortment probabilities for $M$ items, most existing methods do not exploit (in)dependence structures among products or treatments and can be inefficient when $M$ is large. Methods that exploit such structure typically rely on strong modeling assumptions, such as factorization models \citep[e.g.][]{gopalan2014bayesian}, low-dimensional latent feature representation \citep[e.g.][]{ruiz2020shopper}, shared confounders \citep[e.g.][]{ranganath2018multiple} or independence of pre-specified categories \citep[e.g.][]{donnelly2021counterfactual}.

In contrast, our approach exploits sparsity in the dependence structure to construct estimators with improved finite-sample performance and stronger theoretical guarantees. We only assume that the underlying dependence structure is sparse, without requiring knowledge of its support. Without additional structural assumptions, the distribution of an $M$-dimensional binary vector involves $2^M - 1$ parameters \citep{bahadur1959representation}, consisting of $M$ marginal probabilities and $2^M - M - 1$ \emph{generalized correlation coefficients}. Independence among some components could reduce the parameter number substantially. So in our setup, sparsity naturally arises from the intrinsic sparse generalized correlation structure across binary components.

A natural starting point is the maximum likelihood estimator (MLE). However, the log-likelihood function is highly non-concave in the marginal probabilities, making optimization computationally infeasible, and the MLE tends to overfit when $2^M$ is comparable to or exceeds $N$. Moreover, the MLE fails to adapt to the underlying sparse generalized correlation structure among the binary components. Notably, this non-concavity concerns only the $M$ marginal probabilities, for which consistent estimators are readily available (e.g., sample averages). Conditional on these marginal probabilities, the remaining $2^M - M - 1$ generalized correlation coefficients yield a concave log-likelihood. This naturally motivates a two-stage plug-in approach: first estimate the marginal probabilities, then plug them in to estimate the generalized correlation coefficients via a convex program with an $\ell_1$ penalty to exploit sparsity \citep[e.g.][]{tibshirani1996regression,tibshirani1997lasso,zou2006adaptive,belloni2014high}. We treat the $M$ marginal probabilities as nuisance parameters and the $2^M - M - 1$ generalized correlation coefficients as the parameters of interest. This formulation leads to a high-dimensional estimand in the presence of low- to moderate-dimensional nuisance parameters. In contrast, the debiased machine learning literature \citep[e.g.,][]{chernozhukov2017double} typically considers low-dimensional target parameters with high-dimensional nuisance components. We show that a plug-in approach induces misspecification in the design matrix and fails to achieve rate optimality. While from a statistical performance we want to adapt to the intrinsic dimension, from a computational perspective, the dimension of the problem is $p \asymp 2^M$. 

To address these challenges, we propose two \textit{regularized adversarial estimators}. The first, an \textit{adversarial estimator}, retains the $\ell_1$ penalty and maximizes the worst-case log-likelihood over the marginal probabilities, achieving faster convergence than the plug-in approach. However, it is computationally intractable due to the inner minimization over the marginals. We therefore introduce a novel \textit{first-order estimator}, which approximates the log-likelihood via a first-order expansion in the marginal probabilities, yielding an objective that is concave in the marginals. We construct the adversarial set for the marginal probabilities as a hyper-rectangle, so that the inner minimization is attained at one of its vertices (requiring only $p$ evaluations). Since the objective remains concave in the generalized correlation coefficients, both the inner minimization and outer maximization are computationally tractable. This estimator matches the performance of the adversarial estimator and is rate-optimal under mild conditions, achieving the same rate as in the oracle case where the marginal probabilities are known. We summarize our contributions as follows: 

\paragraph{Contributions}
First, we propose a novel method for estimating assortment probabilities that adapts to the independence structure among $M$ items using the Bahadur representation \citep{bahadur1959representation} without explicit structural assumptions. Second, we develop an adversarial estimation approach for high-dimensional estimand in the presence of low-dimensional nuisance parameters. When the support of the generalized correlation coefficients contains at most $s$ nonzero terms, without assuming their locations, we show that the oracle estimator achieves a rate of $\sqrt{sM/N}$ (the standard Lasso rate) when the marginal probabilities are known. When the marginals are unknown, the plug-in estimator attains a rate of $M\sqrt{s/N}$, while the first-order estimator achieves $\sqrt{sM/N}+\sqrt{M\log M/N}$. Consequently, the first-order estimator is rate-optimal when $s\gtrsim\log M$. Third, we extend the framework to settings with low-dimensional covariates, which require estimating multiple nuisance functions. We establish a similar convergence rate for the \textit{localized first-order estimator}, which outperforms the \textit{localized plug-in estimator}, as supported by our simulation studies. Finally, we apply our framework to causal inference with many binary treatments, where estimating generalized propensity scores \citep[e.g.,][]{rosenbaum1983central,imbens2000role} is challenging. Exploiting independence structures, our estimators are consistent and efficient, with substantial finite-sample gains when the number of treatment combinations is large. Simulations show that our approach outperforms the multinomial logit model \citep[e.g.][]{imai2004causal,li2019propensity,yang2016propensity} commonly used for multi-level treatments.

\paragraph{Related Literature.} 
Some existing works also study the estimation of joint probabilities of binary components or assortments based on \citet{bahadur1959representation}. For example, \citet{sinha2010multivariate} fits a Bahadur representation with higher-order terms set to zero, while \citet{yuan2021community} approximates the joint distribution of within-community connectivities using a truncated Bahadur representation. Unlike these works, we consider the full Bahadur representation without assuming that higher-order terms vanish or that the support of the nonzero terms is known; instead, we impose only a sparsity assumption.
The additional structure of assortments or binary components has also motivated alternative modeling approaches \citep[e.g.][]{banerjee2008model,urschel2017learning,rachkovskij2012similarity,rachkovskij2001representation,rachkovskij2013building,cox1972analysis,rao2004efficiency,gyllenberg1997classification,freund1991unsupervised,ichimura1998maximum}. Specifically, our formulation complements the literature on the conditional independence for undirected graphical models \citep{loh2012structure,morrison2017beyond,van2014new,fan2017high}, including classical Ising model \citep{cipra1987introduction,newell1953theory}. The Ising model, restricted to pairwise dependencies, may still exhibit nonzero higher-order correlations, precluding sparsity in our representation. As discussed in Remark~\ref{rmk:ass1:comparison:ising:graphic}, conditional independence in binary graphical models and sparsity of the Bahadur generalized correlation coefficients \citep{bahadur1959representation} are distinct; Examples~\ref{ex1:sparsity} and~\ref{ex2:sparsity} further show that neither implies the other. Our localized estimator relates to a rich literature on the local maximum likelihood estimation \citep{fan1998local, newey1994kernel, staniswalis1989local, stone1977consistent, tibshirani1987local, lewbel2007local, hansen2008uniform}.  
Our method is also related to recent developments on adversarial approach, including regularized M-estimation \citep{hirshberg2021augmented}, adversarial Riesz representation \citep{chernozhukov2021adversarial}, distributionally robust optimization \citep{blanchet2019quantifying,blanchet2019robust}.

\section{Problem Formulation and Estimators}\label{Sec:Setup}
We have $N$ i.i.d. observations $\{(X_i,Y_i)\}_{i=1}^N$ where $X_i\in\mathcal{X}$ for some compact set $\mathcal{X}\subset\mathbb{R}^d$ and $Y_i \in\{0,1\}^M$. We begin with characterizing $\mathbb{P}(Y_i = y \mid X_i = x)$ by extending the unconditional representation of $\mathbb{P}(Y_i = y)$ by \citet{bahadur1959representation}. For a binary vector $y \in \{0,1\}^M$ and marginal probabilities $\alpha=(\alpha_1,\ldots,\alpha_M)\in (0,1)^M$, we define the $j$-th standardized component as 
\begin{equation}\label{eq:z}
\displaystyle z_j(y,\alpha):=(y_{j}-\alpha_j)/\sqrt{\alpha_j(1-\alpha_j)}.
\end{equation}
For $j\in[M]:=\{1,2,\ldots,M\}$, we denote the $j$-th conditional marginal probability as $\alpha_{0j}(x) = \mathbb{P}(Y_{ij}=1 \mid X_i=x)$, which is the $j$-th component of vector $\alpha_0(x)\in(0,1)^M$. Note $\mathbb{E}[ z_j(Y_i,\alpha_0(x))\mid X_i=x]=0$ and $\mathrm{Var}[ z_j(Y_i,\alpha_0(x))\mid X_i=x]=1$. A $k$-tuple $ \ell = \{j_1,\ldots,j_k\} \subseteq [M]$ such that $k=|\ell|\geq 2$ represents an assortment. Since there are in total of $p=2^M-M-1$ possible assortments, we index each component $l \in [p]$ by its corresponding assortment $\ell = \ell(l)$. Given covariate $x\in\mathcal X$, the \textit{generalized correlation coefficients} are $r_0(x):=\left(r_{01}(x),\ldots,r_{0p}(x)\right)$, where for any $l\in [p]$, 
\begin{equation}\label{def:r0}
r_{0l}(x):= \mathbb{E}\left[ \prod_{j\in\ell(l)}z_j(Y_{i},\alpha_0(X_i)) \bigg| X_i=x\right].
\end{equation}
So each entry $r_{0l}(x)$ corresponds to an assortment $\ell(l)$ such that $|\ell(l)|\geq2$.  \citet{bahadur1959representation} characterizes the joint distribution of a multivariate binary vector in the absence of covariates, and its extension to the conditional distribution of all binary components given covariates is straightforward, following Propositions 1 and 2 in \citet{bahadur1959representation}:
\begin{theorem}\label{Thm:Bahadur}
$\mathbb{P}(Y_i = y \mid X_i=x )= f(y,\alpha_0(x),r_0(x))\prod_{j=1}^M \alpha_{0j}(x)^{y_j}(1-\alpha_{0j}(x))^{1-y_j}$ for any $y \in \{0,1\}^M$ and $x \in \mathcal{X}$, where $\displaystyle f(y,\alpha_0(x),r_0(x)):= 1 + \sum_{\ell(l)\subseteq [M] : \left|\ell(l)\right|\geq2} r_{0l}(x)\prod_{j\in\ell(l)} z_{j}(y,\alpha_0(x))$. 
\end{theorem}
Theorem \ref{Thm:Bahadur} characterizes the dependence between components through the generalized correlation coefficients $r_0(x)=(r_{0l}(x))_{l\in[p]}$ within function $f$. By defining a $p$-dimensional vector $W(\alpha,y)$ where for any $l\in[p]$, its $l$-th component is equal to $W_{l}(\alpha,y):=\prod_{j\in \ell(l)}z_j(y,\alpha)$, we can write the function $f$ as an affine function of the generalized correlation coefficients: $f(y,\alpha_0(x),r_0(x))= 1+W(\alpha_0(x),y)'r_0(x)$.
If all the binary components are independent, $r_{0}(x)=0$ and $f(y,\alpha_0(x),0)=1$, which recovers the conditional likelihood of $y$ given $x$ as $\prod_{j=1}^M \alpha_{0j}(x)^{y_j}(1-\alpha_{0j}(x))^{1-y_j}$ under full independence. According to \eqref{def:r0}, for any $l\in[p]$, if within the assortment $\ell(l)$ there is at least one binary component independent from the others conditioning on the covariates, then $r_{0l}(x)=0$. This connects independence structure with zero value of a  generalized correlation coefficient. Thus in applications of interest where independence holds among some of the binary components, we can expect the vector $r_0(x) \in \mathbb{R}^p$ to be sparse. Theorem~\ref{Thm:Bahadur} implies that estimating the assortment probabilities requires estimating $\alpha_0(x)$ and $r_0(x)$. 

For the remainder of this section, we first focus on the \textit{no-covariate case}, i.e. the unconditional probabilities $\mathbb{P}(Y_i=y)$. We begin with the \textit{maximum likelihood estimator} (MLE) as a natural baseline and discuss its limitations. Building on the MLE, we introduce a \textit{plug-in estimator} that improves upon it both statistically and computationally. However, the plug-in estimator is not rate-optimal. To address this, we consider two \textit{regularized adversarial estimators}. The first, termed the \textit{adversarial estimator}, is a natural extension of the plug-in approach but is computationally infeasible. We therefore propose the \textit{first-order estimator} as a second regularized adversarial estimator, which is computationally feasible and rate-optimal in common parameter regimes under mild conditions. Building on the results established for the no-covariate case, we then extend the proposed estimators to the \textit{with-covariate case}, i.e. the conditional probabilities $\mathbb{P}(Y_i=y\mid X_i=x)$. In this case, we obtain localized counterparts of the plug-in, adversarial, and first-order estimators. Theoretical properties of these estimators parallel those in the no-covariate case, while the inclusion of covariates introduces additional technical challenges.

\paragraph{Maximum Likelihood Estimator} We first consider MLE for $(\alpha_0,r_0)$:
$$(\hat \alpha^{mle}, \hat r^{mle}) \in \argmax_{(\alpha,r)\in \mathbb{K}} \sum_{i=1}^N\left\{\log( 1+W(\alpha,Y_i)'r )+ \sum_{j=1}^M Y_{ij} \log(\alpha_j)+ (1-Y_{ij}) \log(1-\alpha_j)\right\}$$
where $\mathbb{K}= \left\{ (\alpha,r) :  1+W(\alpha,y)'r \geq 0, \forall y\in\{0,1\}^M\right\}$ includes $(\alpha,r)$ leading to non-negative probability estimates for all possible assortments. However, the MLE faces practical limitations: First, the objective is non-concave in $\alpha$ due to the standardized outcomes \eqref{eq:z} in the formulation of vector $W(\alpha,Y_i)$. Second, the number of parameters grows exponentially with $M$, leading to overfitting even for moderate values of $M$. Third, the MLE fails to exploit the underlying independence structure, which could otherwise improve estimation efficiency.

\paragraph{Plug-in Estimator} Although the objective function for MLE is non-concave in $\alpha$, it is well-behaved in $r$. Fixing $\alpha$, the objective function of MLE becomes a concave maximization problem in $r$. Since independence implies sparsity in the coefficient vector $r_0$, the $\ell_1$-regularized estimator is particularly suitable. Moreover, consistent estimators for $\alpha_0$ (e.g., sample averages) are readily available. A straightforward approach is therefore to first estimate $\alpha_0$ by $\hat{\alpha}=(\hat{\alpha}_1,\ldots,\hat{\alpha}_M)$, plug $\hat{\alpha}$ into the objective of MLE, and apply an $\ell_1$ penalty to $r$. The resulting estimator is referred to as the plug-in estimator $\hat r^{PI}$ for the no-covariate case: 
\begin{equation}\label{eq:PMLE-no-covariates}
     \hat r^{PI}\! \in \!\arg\max_{r}  \displaystyle \frac{1}{N}\sum_{i=1}^N  \log( 1+ W(\hat \alpha,Y_i)'r ) - \lambda \|r\|_{1,w},
\end{equation}
where $\|r\|_{1,w}:= \sum_{j=1}^p w_j |r_j|$ is a weighted $\ell_1$-norm and $\alpha$ are the nuisance parameters. Additionally, the weights are defined as $w_j :=\{\frac{1}{N}\sum_{i=1}^NW_j^2(\hat{\alpha},Y_i)/(1+W(\hat{\alpha},Y_i)'\hat{r}^{(0)})^2\}^{1/2}$ for any $j\in[p]$, where $\hat{r}^{(0)}$ denotes an initial estimator for $r_0$. Theoretically, the ideal implementation would set $w_j =\{\frac{1}{N}\sum_{i=1}^NW_j^2(\alpha_0,Y_i)/(1+W(\alpha_0,Y_i)'r_0)^2\}^{1/2}$, which balances the heterogeneity across components. However, since $\alpha_0$ and $r_0$ are unknown, we instead use their estimates $\hat{\alpha}$ and $\hat{r}^{(0)}$. Although the weights depend on unknown quantities, stable estimates can be obtained via an iterative procedure with mild over-penalization. Note that $\mathrm{dim}(\alpha_0)=M$ and $\mathrm{dim}(r_0)=2^M-M-1=p$, so $\mathrm{dim}(r_0)\gg\mathrm{dim}(\alpha_0)$ as $M$ grows. When $p\gg M$, we can obtain estimators for $\alpha_0$ with a faster rate of convergence than for $r_0$. Specifically, for the rest of the paper, we proceed with the consistent estimator 
\begin{equation}\label{eq:SAA}
    \hat\alpha_j = \frac{1}{N}\sum_{i=1}^N Y_{ij}
\end{equation}
for $\alpha_{0j}$, $\forall j\in [M]$, to be used in all no-covariate case estimators. Since $\alpha_{0j}\in(0,1)$, for sufficiently large $N$, $\hat{\alpha}_j(1-\hat{\alpha}_j)>0$ with high probability, so $z_j(y,\hat{\alpha}_j)$ is well defined. In practice if $\hat{\alpha}_j(1-\hat{\alpha}_j)$ becomes numerically close to zero, one can apply a mild truncation or add a small constant to the denominator to avoid division by zero. 
\begin{remark}\label{remark:non-negativity}
In the estimation of $r_0$ for the plug-in estimator, we do not impose the constraint on $\mathbb{K}$, because we are using $\hat\alpha$, and it is possible that $(\hat\alpha,r_0) \not\in \mathbb{K}$. By construction of the objective function (\ref{eq:PMLE-no-covariates}), since $r=0$ is always feasible, by optimality of $\hat{r}^{PI}$ we have $1+W(\hat\alpha,Y_i)'\hat r^{PI}> 0$ for every $Y_i$ in the data, so the probability estimates is strictly positive for any outcome vector in the data. We also show in the proofs that under mild conditions, $1+W(\hat\alpha,Y_i)'r_0 > 0$ for any $Y_i$ in the data. Similar arguments hold for the estimators $\hat{r}^A, \hat{r}^{FO}$ defined later. This result is sufficient in many applications, because practitioners typically only need probability estimates for assortments that are actually observed in the data. For example, in A/B testing or recommendation systems, a firm may only evaluate the probability of a limited set of product or advertisement assortments that are presented to users, rather than all possible combinations of items. Similarly, in retail assortment analytics, firms typically analyze purchase patterns among product bundles that have appeared in historical transactions, without requiring probability estimates for bundles that have never been offered. Related discussion on possible extensions is included in Section~\ref{sec:discussion}. 
\proofend
\end{remark}
Although $\hat{r}^{PI}$ is computationally tractable, plugging in $\hat{\alpha}$ induces mis-specification in the score function related to \eqref{eq:PMLE-no-covariates}. The resulting discrepancy is dominated by the $\ell_1$ estimation error $\|\hat{\alpha}-\alpha_0\|_1 = \mathrm{O}_{\mathrm{p}}(M/\sqrt{N})$, introducing additional sparsity-dependent factors and yielding a slower convergence rate than the typical lasso rate. Detailed discussions are provided in Section~\ref{sec:theoretical}. 

\paragraph{Regularized Adversarial Estimators} Faced with the slower rate of convergence for the plug-in estimator, we propose an adversarial estimator $\hat{r}^A$, defined as 
\begin{equation}\label{eq:PMLE-no-covariates-non-linear}
    \displaystyle \hat r^{A}\in \argmax_{r}\min_{\alpha \in \mathcal{A}(\hat\alpha)}\frac{1}{N}\sum_{i=1}^N \log(1+ W(\alpha,Y_i)'r ) - \lambda \|r\|_{1,w}
\end{equation}
where $\mathcal{A}(\hat{\alpha})$ is an adversarial set such that $\alpha_0 \in \mathcal{A}(\hat{\alpha})$ with high probability. From a statistical perspective, the minimization of  $\alpha \in \mathcal{A}(\hat\alpha)$ acts as a regularization allowing us to better control the mis-specification for not knowing $\alpha_0$. This allows us to use a smaller penalty parameter $\lambda$, mitigating the bias induced by the overly large $\lambda$ required by the plug-in estimator due to overfitting on $\hat{\alpha}$. Since the minimum of concave functions is still concave, the objective function \eqref{eq:PMLE-no-covariates-non-linear} is concave in $r$. However, the inner problem of \eqref{eq:PMLE-no-covariates-non-linear} requires the minimization over $\alpha$ through the highly non-linear and non-concave mapping $W(\alpha,\cdot)$. This computational challenge motivates us to further propose the first-order estimator $\hat{r}^{FO}$, defined as
\begin{equation}\label{eq:PMLE-no-covariates-bilinear}   
    \hat r^{FO}\in\argmax_{r}\min_{\alpha \in \mathcal{A}(\hat\alpha)}\frac{1}{N}\sum_{i=1}^N\log\left(1+W^{FO}(\alpha,Y_i)'r\right)-\lambda \|r\|_{1,w},
\end{equation}
where $W^{FO}(\alpha,Y_i):=W(\hat\alpha,Y_i)+\nabla_{\alpha} W(\hat\alpha,Y_i)(\alpha-\hat\alpha)$.
Estimator (\ref{eq:PMLE-no-covariates-bilinear}) also adopts an adversarial approach but uses a first-order approximation of $W(\alpha,Y_i)$ with respect to $\alpha$ around $\hat{\alpha}$. The estimator has a bilinear form and is concave in $\alpha$ for any fixed $r$. Hence, the minimization of this concave function is attained at the extreme points of $\mathcal{A}(\hat{\alpha})$. For appropriately structured sets $\mathcal{A}(\hat{\alpha})$, such as a hyper-rectangle $\mathcal{A}(\hat{\alpha}) = \prod_{j=1}^M [\hat{\alpha}_j - c_j, \hat{\alpha}_j + c_j]$, where $c_j$ denotes the confidence interval half-width and $\hat{\alpha}_j$ is defined by~\eqref{eq:SAA}, the problem remains computationally feasible. As a result, the discrepancy in the score function becomes negligible since it includes only the second-order bias in the marginal probabilities, in contrast to the plug-in estimator $\hat{r}^{PI}$. As shown in Theorem~\ref{thm:main:FO}, the $\lambda$ for $\hat{r}^{FO}$ essentially bounds the $\ell_\infty$-norm of the score as if the marginal probabilities were known.

\paragraph{Incorporating Covariates} The insights and estimators from the no-covariate case extend naturally to the with-covariates case. However, the nuisance parameters associated with the marginal probabilities are now functions, which complicates the construction of the adversarial set, as discussed later. Given initial estimates $\hat\alpha(\cdot)=\left(\hat\alpha_1(\cdot),\ldots,\hat\alpha_M(\cdot)\right)$, we propose the following localized versions of the estimators \eqref{eq:PMLE-no-covariates}, \eqref{eq:PMLE-no-covariates-non-linear}, \eqref{eq:PMLE-no-covariates-bilinear} given any covariate $x$: 
$$\hat r^{PI}(\cdot,x) \in \arg \max_{a, b}  \displaystyle  \frac{1}{N}\sum_{i=1}^N  K_h\left( \mbox{$X_i-x$}\right)  \log( 1+W(\hat\alpha (X_i),Y_i)'r_{a,b}(X_i,x)) - \lambda_x\|r_{a,b}\|_{w_x,h,1}.$$
$$\hat r^{A}(\cdot,x) \in \arg \max_{a, b} \min_{\alpha \in \mathcal{A}_x(\hat\alpha)} \displaystyle  \frac{1}{N}\sum_{i=1}^N  K_h\left( \mbox{$X_i-x$}\right)  \log( 1+W(\alpha(X_i),Y_i)'r_{a,b}(X_i,x) ) - \lambda_x \|r_{a,b}\|_{w_x,h,1}$$
$$\begin{array}{l}
\displaystyle \hat r^{FO}(\cdot,x) \in \arg \max_{a, b} \min_{\alpha \in \mathcal{A}_x(\hat\alpha)}  \frac{1}{N}\sum_{i=1}^N  K_h\left( \mbox{$X_i-x$}\right)  \log\left( 1+ W(\hat\alpha(X_i),Y_i)'r_{a,b}(X_i,x) \right.\\
\displaystyle \hspace{5.5cm} \left. +(\alpha(X_i)-\hat\alpha(X_i) )'\nabla_\alpha W(\hat\alpha(X_i),Y_i)'r_{a,b}(X_i,x) \right)  - \lambda_x \|r_{a,b}\|_{w_x,h,1},
\end{array}$$
where $K_h(X_i-x):=\frac{1}{h^d}K\left(\frac{X_i-x}{h}\right)$, $K:\mathbb{R}^d\rightarrow \mathbb{R}$ is the kernel function, and $h$ is the bandwidth. For $a\in \mathbb{R}^p$, $b \in \mathbb{R}^{p\times d}$, $r_{a,b}(X_i,x):= a + b(X_i-x)$, $\|r_{a,b}\|_{w_x,h,1} :=\sum_{j=1}^p w_{xj} |a_j| + h\sum_{j=1}^p\sum_{k=1}^d w_{xjk}|b_{jk}|$, and the weights are $w_{xj} :=\left\{\frac{1}{N}\sum_{i=1}^NK_h(X_i-x)W_j(\hat{\alpha}(X_i),Y_i)^2/(1+W(\hat{\alpha}(X_i),Y_i)'\hat{r}^{(0)}(X_i,x))^2\right\}^{1/2}$, $w_{xjk}:=\left\{\frac{1}{N}\sum_{i=1}^NK_h(X_i-x)W_j(\hat{\alpha}(X_i),Y_i)^2[(X_{ik}-x_k)/h]^2/(1+W(\hat{\alpha}(X_i),Y_i)'\hat{r}^{(0)}(X_i,x))^2\right\}^{1/2}$ for any $j\in[p]$, $k\in[d]$, where $\hat{r}^{(0)}(X_i,x)$ denotes an initial estimator for $r_0(X_i)$. The initial estimates for $\alpha_{0j}(x)$ can be obtained from different approaches. In this paper we focus on the localized estimator defined for any $j\in[M]$ as follows:
\begin{equation}\label{def:localized:alpha:estimator}
\left(\tilde{\alpha}_j(x), \tilde{\beta}_j(x)\right)\in \arg\!\min_{(\tilde a,\tilde b)}\frac{1}{N}\!\sum_{i=1}^N\!K_h(X_i-x)\!\left\{Y_{ij} - \tilde a - \tilde b(X_i-x)\right\}^2.
\end{equation} 
We construct the data-dependent adversarial set $\mathcal{A}_x(\hat{\alpha})$ based upon \eqref{def:localized:alpha:estimator}. All three objectives for the estimators in the covariate setting are concave functions in $r_{a,b}$. At covariate $X_i$, the generalized correlation coefficient vector is $r_0(X_i)$, where $r_0(X_i)=r_0(x)+\nabla r_0(x)(X_i-x)+\Delta$ and $\|\Delta\|_{\infty}=\mathrm{O}\left(\|X_i-x\|^2\right)$ under proper regularity conditions, where $\|\cdot\|, \|\cdot\|_\infty$ denote the Euclidean norm and the $\ell_\infty$ norm throughout the paper. Suppose $K(\cdot)$ is supported on $\{u\in\mathbb{R}^d:\|u\|\le1\}$. Then, by the definition of $K_h(\cdot)$, for any $X_i$ such that $K_h(X_i-x)\neq0$, we have $\mathrm{O}\left(\|X_i-x\|^2\right)=\mathrm{O}(h^2)$. Hence, $r_{a,b}(X_i,x)$ effectively approximates $r_0(X_i)$ when $X_i$ is close enough to $x$, where $a$ approximates $r_0(x)$ and $b$ approximates its gradient $\nabla r_0(x)$. The minimization over $\alpha(\cdot) \in \mathcal{A}_x(\hat{\alpha})$ is again attained at an extreme point of the nuisance function set $\mathcal{A}_x(\hat{\alpha})$. However, care must be taken in specifying this set, as it can easily become too large. For instance, since the nuisance functions depend on $X_i$, one might be tempted to minimize over each $\alpha(X_i)$ separately, but this would create an overly adversarial setup that ignores the dependence across covariates $\{X_i\}_{i\in[N]}$. We define the set of functions $\mathcal{A}_x(\hat\alpha)$ such that the function $\alpha_{*x}(X)=\alpha_0(x)+\nabla_x\alpha_0(x)(X-x) \in \mathcal{A}_x(\hat\alpha)$ with high probability. Additionally, the penalty for the gradient estimation term $b$ is scaled by the bandwidth parameter $h$. Intuitively, since the estimation target is $r_0(x)$, the factor $h$ attenuates the effect of the estimation error in $\nabla r_0(x)$ through $b$. Note that $r_{a,b}(X_i,x)$ provides a first-order approximation to $r_0(X_i)$, and higher-order refinements are possible in principle. Such extensions, however, would require additional regularity and smoothness assumptions. Below we include a remark on higher-order approximations:
\begin{remark}\label{rmk:higher:order:approx}
One could alternatively define the localized estimator with higher order terms. For example, for the following version of the plug-in estimator
$$\begin{array}{rl}
\displaystyle\max_{a,b,c}\displaystyle\frac{1}{N} \sum_{i=1}^N  K_h\left( \mbox{$X_i-x$}\right)  \log( 1+W(\hat\alpha (X_i),Y_i)'r_{a,b,c }(X_i,x)  ) - \lambda_x \|r_{a,b,c}\|_{w_x,h,1}, 
\end{array}$$
where $a\in \mathbb{R}^p, b \in \mathbb{R}^{p\times d}, c \in \mathbb{R}^{p\times d\times d}$, $r_{a,b,c}(X_i,x):= a + b(X_i-x) + c[X_i-x,X_i-x]$,
$$\|r_{a,b,c}\|_{w_x,h,1} :=\sum_{j=1}^p w_{xj} |a_j| + h\sum_{j=1}^p\sum_{k=1}^d w_{xjk} |b_{jk}| + h^2\sum_{j=1}^p\sum_{k=1}^d\sum_{m=1}^d w_{xjkm} |c_{jkm}|,$$ 
and $c[X_i-x,X_i-x]$ is a contraction of the $p\times d\times d$ tensor with two copies of $X_i-x$, producing a $p$-dimensional vector. This formulation increases the dimension of the parameter space. It would require higher-order smoothness conditions and needs to account for the sparsity in the second-order derivative of $r_0(x)$. Meanwhile it would also reduce the approximation errors by improving the dependence on the bandwidth $h$. \proofend
\end{remark}

\paragraph{Notations.} Throughout the paper, $\|\cdot\|_1$, $\norm{\cdot}_2$ (or $\norm{\cdot}$), $\norm{\cdot}_\infty$, and $\norm{\cdot}_0$ denote the $\ell_1$-, $\ell_2$-, $\ell_\infty$-, and $\ell_0$-norms, respectively. For a positive integer $n$, $[n]:=\{1,\ldots,n\}$.
For a matrix $A\in\mathbb{R}^{m\times n}$, $\norm{A}_{\infty,\infty}:=\max_{i\in[m],j\in[n]}|A_{ij}|$, $\norm{A}_{k,k}:=(\sum_{i=1}^m\sum_{j=1}^n|A_{ij}|^k)^{1/k}$, $\norm{A}_0$ denotes the number of nonzero entries. $\mathbf{1}\{\ \cdot\ \}$ denotes the indicator. $\propto$ denotes proportionality. For any vector or matrix $V$, $V'$ denotes the transpose of $V$. Given $\delta\in\mathbb{R}^p$ and $T\subseteq[p]$, define $\delta_T$ as $\delta$ restricted to $T$. For random variables $Z$ and $W$, $Z\indep W$ denotes independence. For any finite-dimensional real vector $\nu$, $\mathrm{dim}(\nu)$ denotes its dimension. $\mathcal{N}(\mu,\sigma^2)$ denotes the Gaussian distribution with mean $\mu$ and variance $\sigma^2$. For deterministic sequences ${a_n}$ and ${b_n}$, we write $a_n = \mathrm{O}(b_n)$ if $|a_n| \le C |b_n|$ for some constant $C>0$ and all sufficiently large $n$, and $a_n = \mathrm{o}(b_n)$ if $a_n/b_n \to 0$ as $n \to \infty$. For stochastic sequences, $X_n = \mathrm{O}_{\mathrm{p}}(a_n)$ means $X_n/a_n$ is bounded in probability, and $X_n = \mathrm{o}_{\mathrm{p}}(a_n)$ means $X_n/a_n \xrightarrow{p} 0$, $X_n=\Omega_p(a_n)$ means $X_n/a_n$ is bounded away from zero with non-vanishing probability. We write $a \gtrsim b$ if there exists a universal constant $C_1>0$ such that 
$a \ge C_1 b$ for all sufficiently large $N$.

\section{Theoretical Results}\label{sec:theoretical}
In this section, we present the main theoretical results. We start with assumptions for the with-covariate case. The no-covariate case can be viewed as a special case where the covariate space $\mathcal{X}$ is a singleton. In this case, $\alpha_0(x)=\alpha_0$, $r_0(x)=r_0$, and $\nabla_x r_0(x)=0$, so that the sparsity condition reduces to $\|r_0\|_0\le s$, corresponding to condition (i) of Assumption~\ref{ass:iid-min-den}. We focus on estimating $r_0(x)$ and $\alpha_0(x)$ at a given covariate $x$ in the interior of $\mathcal{X}$. 

\begin{assumption}[Sparsity and regularity conditions]\label{ass:iid-min-den}
(i) $\sup_{x \in \mathcal{X}} \|r_0(x)\|_0+\|\nabla r_0(x)\|_0 \leq s$. (ii) For some positive constant $\bar{L}$, ${ \displaystyle \max_{x\in\mathcal{X}, y\in \{0,1\}^M}} 1+W(\alpha_0(x),y)^\prime r_0(x) \leq \bar L$ (iii) ${ \displaystyle \min_{i\in [N]}} 1+W(\alpha_0(X_i),Y_i)'r_0(X_i) > \xi$ with probability $1-\epsilon$. (iv) $\alpha_{0j}(x)$ is bounded away from $0$ and $1$ (i.e. $\exists\ \eta\in(0,1)$, such that $\eta<\alpha_{0j}(x)<1-\eta$) for all $j\in[M]$, $\forall x\in\mathcal{X}$. 
\end{assumption}
Condition (i) of Assumption~\ref{ass:iid-min-den} bounds the degree of sparsity on $r_0(x) \in \mathbb{R}^p$ and $\nabla r_0(x) \in \mathbb{R}^{p\times d}$. When all binary components are independent from each other (for all values of $x\in\mathcal{X}$), we have $s=0$. The analysis also directly extends to a local measure of sparsity (i.e., $s_x$). For instance, when only pairwise interactions are present and higher-order interactions are absent, we have $\|r_0(x)\|_0+\|\nabla r_0(x)\|_0\leq(d+1)\binom{M}{2}\ll 2^M$.
Conditions (ii) and (iii) impose upper and lower bounds on the deviations from full independence. For example, this condition holds in empirical settings where there exist $q_1, q_2>0$ such that the conditional probabilities are bounded within multiplicative factors $q_1$ and $q_2$ of the independent probabilities, i.e., $q_1\prod_{j=1}^M\alpha_{0j}(x)^{y_j}(1-\alpha_{0j}(x))^{1-y_j}\le \mathbb{P}(Y_i=y\mid X_i=x) \le q_2\prod_{j=1}^M\alpha_{0j}(x)^{y_j}(1-\alpha_{0j}(x))^{1-y_j}$. Assumption \ref{ass:iid-min-den} allows for situations where some combinations of treatments have zero probability (so no $Y_i$ equal to that value exists in the data). 
\begin{remark}\label{rmk:ass1:comparison:ising:graphic}
The sparsity assumption in Assumption~\ref{ass:iid-min-den} differs from that in sparse Ising or binary graphical models, which impose sparsity on canonical interaction parameters. The two notions are fundamentally distinct: Even in a tree-structured Ising model, most pairwise correlations are typically nonzero, and Bahadur generalized correlation coefficients can be dense despite graph sparsity. Conversely, sparsity in the Bahadur generalized correlation coefficients does not imply a sparse conditional independence graph either. In particular, a distribution with only a single nonzero generalized correlation coefficient can still induce dense graphical conditional dependencies.
\proofend
\end{remark}
Examples~\ref{ex1:sparsity} and~\ref{ex2:sparsity} below illustrate our argument. In both examples, $Y_i=(Y_{i1},Y_{i2},Y_{i3},Y_{i4})\in\{0,1\}^4$ denotes the purchase decisions of the $i$-th customer for an assortment of four products. Suppose $\alpha_{0j}=\frac{1}{2}$ for each $j\in[4]$, and $Z_{ij}:=z_{j}(Y_{ij},\alpha_{0j})=2Y_{ij}-1\in\{-1,+1\}$. We use $r_{0,\ell}$ to denote the generalized correlation coefficient associated with the coordinate corresponding to bundle $\ell\subseteq[4]$.
\begin{example}\label{ex1:sparsity}
 Suppose $\mathbb{P}(Z_i=(z_1,z_2,z_3,z_4))\propto\exp\{\theta(z_1z_2+z_2z_3+z_3z_4)\}$ with $\theta\neq0$. This corresponds to a pairwise markov random field of the four-node Ising chain: ``$1-2-3-4$'' with only three edges $(1,2), (2,3), (3,4)$, and each node depends only on its neighbors, so $Z_{i1}\indep(Z_{i3},Z_{i4})|Z_{i2}$, $Z_{i4}\indep (Z_{i1},Z_{i2})|Z_{i3}$, etc. 
This is a sparse conditional independence graph. However, all the pairwise generalized correlation coefficients are nonzero, since for some $\rho\neq0$, $r_{0,12}=r_{0,23}=r_{0,34}=\rho$, $r_{0,13}=r_{0,24}=\rho^2$, $r_{0,14}=\rho^3$. Additionally, $r_{0,1234}=\rho^2$. So graphical model sparsity doesn't imply the sparsity assumption in Assumption~\ref{ass:iid-min-den}.  \proofend
\end{example}
\begin{example}\label{ex2:sparsity}
Suppose $\mathbb{P}(Z_i=(z_1,z_2,z_3,z_4))\propto(1+\gamma z_1z_2z_3z_4)/16$ where $0<|\gamma|<1$. Then $r_{0,jk}=0$ for any pair $\{j,k\}$, $r_{0,jkl}=0$ for any triple $\{j,k,l\}$, but $r_{0,1234}=\gamma\neq0$. So $r_0$ is sparse with the only nonzero element $r_{0,1234}$. Since $\mathbb{P}(Z_{i1}=z_1,Z_{i2}=z_2|Z_{i3}=z_3,Z_{i4}=z_4)\propto 1+\gamma z_1z_2z_3z_4$, it can be checked by direct calculation that $\mathbb{P}(Z_{i1}=z_1,Z_{i2}=z_2|Z_{i3}=z_3,Z_{i4}=z_4)\neq\mathbb{P}(Z_{i1}=z_1|Z_{i3}=z_3,Z_{i4}=z_4)\mathbb{P}(Z_{i2}=z_2|Z_{i3}=z_3,Z_{i4}=z_4)$ unless $\gamma=0$. Thus $\mathbb{E}[Z_{i1}Z_{i2}|Z_{i3},Z_{i4}]\neq0$. By symmetry, $\mathbb{E}[Z_{ik}Z_{ij}|Z_{il},Z_{im}]\neq0$ for all distinct $(k,j,l,m)$. So the binary graphical model has all the edges and is not sparse. \proofend
\end{example}
Next, Assumption~\ref{ass:moment-assumptions} imposes moment conditions and conditions on $M,N,h,d,s$:
\begin{assumption}[Moment conditions]\label{ass:moment-assumptions}
Let $\displaystyle\widetilde W(\alpha_0(x),Y)\!\!:=\!\!\frac{[\! W(\alpha_0(x),Y) ;\!\nabla_\alpha W(\alpha_0(x),Y)'r_0(x) ]}{1+W(\alpha_0(x),Y)'r_0(x)}$, $\bar \psi_2 := {\displaystyle \min_{x\in \mathcal{X}, l\in [p+M]}} \mathbb{E}[ \widetilde W_{l}^2(\alpha_0(x),Y)\mid X=x]^{1/2},$ $\bar K_m = {\displaystyle\max_{x\in \mathcal{X}}\max_{l\in [p+M]}}\mathbb{E}[|\widetilde W_{l}(\alpha_0(X),Y)|^m\mid X=x]^{1/m}$, and $\bar K_{\max} = \mathbb{E}\big[ {\displaystyle\max_{l\in [p+M], i \in [N]}}\widetilde W_{l}^2(\alpha_0(X_i),Y_i)\big]^{1/2}$. $\sqrt{\frac{M^3}{Nh^d}}\leq\delta_N$, $sdh^2\leq\delta_N\xi$, $Nh^{d+4}\geq\frac{\log(NMd)}{d^2}$, $d^2h<1$, $M^3\leq N\delta_N^{2+d/2}\left(\frac{\xi}{sd}\right)^{d/2}$, $\delta_N\leq\min\left\{\frac{1}{2},\frac{s}{d^3\xi},\frac{d^{d/(d+4)}M^{3/2}}{N^{\frac{2}{d+4}}[\log(NMd)]^{\frac{d}{2d+8}}}\right\}$, $\delta_N\rightarrow0$, $h\rightarrow0$, $Nh^d\rightarrow\infty$.
\end{assumption}
In the no-covariate case Assumption~\ref{ass:moment-assumptions} implies $\sqrt{M^3/N}\leq\delta_N$. In the with-covariate case, by direct calculation, Assumption \ref{ass:moment-assumptions} implies that $h$ satisfies $\left(\frac{M^3}{N\delta_N^2}\right)^{1/d}\leq h\leq\sqrt{\frac{\delta_N\xi}{sd}}$, this interval is nonempty since we assume $M^3\leq N\delta_N^{2+d/2}\left(\frac{\xi}{sd}\right)^{d/2}$.  Assumption \ref{ass:smoothness-assumptions} in the below imposes smoothness conditions:
\begin{assumption}[Smoothness conditions]\label{ass:smoothness-assumptions}
With probability $1-\delta_\alpha$, conditions (1) - (6) hold as follows: (1) $\alpha_0\in\mathcal{A}(\hat{\alpha})$, (2) ${\displaystyle\max_{j\in [p], \tilde \alpha \in \mathcal{A}(\hat\alpha)}}|v'\nabla_\alpha W_j(\tilde \alpha,Y_i)|  +|v'\nabla_\alpha W(\tilde \alpha,Y_i)'r_0| \leq M^{1/2}K_{\alpha,1}\|v\|$, \\
(3) $|u'\nabla_{\alpha}W(\alpha_0,Y_i)'r_0|\leq\bar{K}_w\|u\|_1$, $|u'\nabla_{\alpha}W(\alpha_0,Y_i)'v|\leq \bar{K}_w\norm{u}_1\norm{v}_{1}$, \\
(4) $\displaystyle\max_{j \in [p], \tilde \alpha \in \mathcal{A}(\hat\alpha)}|u'\nabla_\alpha^2 W(\tilde\alpha,Y_i)'r_0v|+|u'\nabla_\alpha^2 W_j(\tilde\alpha,Y_i)v|\leq K_{\alpha,2}\|u\|\|v\|$, (5) ${\displaystyle \max_{\tilde\alpha \in \mathcal{A}(\hat\alpha)}}\|\tilde \alpha-\alpha_0\|\frac{M^{1/2}K_{\alpha,1}}{\xi}<\frac{1}{4}$, ${\displaystyle\max_{\tilde\alpha \in \mathcal{A}(\hat\alpha)}}\|\tilde\alpha-\alpha_0\|^2 \frac{K_{\alpha,2}}{\xi}\leq\frac{1}{16}$, (6) $|W(\alpha_0,Y_i)'v|\leq\bar{K}_w\|v\|_1$, $u'(\nabla_{\alpha}W(\alpha_0,Y_i)'r_0)(\nabla_{\alpha}W(\alpha_0,Y_i)'r_0)'u\leq K_{\alpha,2}\|u\|^2$.

\noindent With probability $1-\delta_\alpha$, for the with-covariate case, over all $i\in[N]$ where $\|X_i-x\|\leq h$, we have:
(i) {$\alpha_{*x}(X):=\alpha_0(x)+\nabla_x\alpha_0(x)(X-x) \in \mathcal{A}_x(\hat\alpha)$},\\
(ii) ${\displaystyle\max_{j\in [p], \tilde \alpha \in \mathcal{A}_x(\hat\alpha)}}|v'\nabla_\alpha W_j(\tilde \alpha(X_i),Y_i)|  +|v'\nabla_\alpha W(\tilde \alpha(X_i),Y_i)'r_0(X_i)| \leq M^{1/2}K_{\alpha,1}\|v\|$, \\
(iii)$|u'\nabla_{\alpha}W(\alpha_0(X_i),Y_i)'r_0(X_i)|\leq\bar{K}_w\|u\|_1$, $|u'\nabla_{\alpha}W(\alpha_0(X_i),Y_i)'v|\leq \bar{K}_w\norm{u}_1\norm{v}_{1}$,\\
(iv)$\displaystyle\max_{j \in [p], \tilde \alpha \in \mathcal{A}_x(\hat\alpha)}|u'\nabla_\alpha^2 W(\tilde\alpha(X_i),Y_i)'r_0(X_i)v|+|u'\nabla_\alpha^2 W_j(\tilde\alpha(X_i),Y_i)v|\leq K_{\alpha,2}\|u\|\|v\|$,\\
(v)${\displaystyle \max_{\tilde\alpha \in \mathcal{A}_x(\hat\alpha),\|\tilde x-x\|\leq h}}\|\tilde \alpha(\tilde x)-\alpha_0(\tilde x)\|\frac{M^{1/2}K_{\alpha,1}}{\xi}<1/4$,  ${\displaystyle \max_{\tilde\alpha \in \mathcal{A}_x(\hat\alpha),\|\tilde x-x\|\leq h}}\|\tilde \alpha(\tilde x)-\alpha_0(\tilde{x})\|^2 \frac{K_{\alpha,2}}{\xi}\leq1/16$,\\
(vi) $|W(\alpha_0(X_i),Y_i)'v|\leq\bar{K}_w\|v\|_1$, $u'[\nabla_{\alpha}W(\alpha_0(X_i),Y_i)'r_0(X_i)][\nabla_{\alpha}W(\alpha_0(X_i),Y_i)'r_0(X_i)]'u\leq K_{\alpha,2}\|u\|^2$, ${\displaystyle\max_{j\in [p], k\in[M]}}|v'\nabla_{x}^2 r_{0j}(x)v|+ |v'\nabla^2\alpha_{0k}(x)v|\leq \bar{K}_{x,2}\|v\|^2$.
\end{assumption}
The adversarial sets $\mathcal{A}(\hat{\alpha})$, $\mathcal{A}_x(\hat{\alpha})$ constructed in Section~\ref{sec:marginals} make conditions (1), (i) hold with high probability. Without loss of generality, we next focus on discussing the conditions (ii) -- (vi) for the with-covariate case above. Given any $i\in[N],j\in[p],k\in[M]$, $\nabla_{\alpha_k}W_j(\alpha(X_i),Y_i)=\mathbf{1}\{k\in\ell(j)\}W_j(\alpha(X_i),Y_i)\frac{\partial_{\alpha_k}z_k(Y_i,\alpha(X_i))}{z_k(Y_i,\alpha(X_i))}$, where $\frac{\partial_{\alpha_k}z_k(Y_i,\alpha(X_i))}{z_k(Y_i,\alpha(X_i))}=\frac{1-2Y_{ik}}{2\alpha_{k}(X_i)(1-\alpha_{k}(X_i))}$. Additionally, note that $\nabla_{\alpha}^2W_j(\tilde{\alpha}(X_i),Y_i)=W_j(\tilde{\alpha}(X_i),Y_i)[g_{ij}g_{ij}'+D_{ij}]$, where $g_{ij}\in\mathbb{R}^M$ and $g_{ijk}=\frac{\mathbf{1}\{k\in\ell(j)\}(1-2Y_{ik})}{2\tilde{\alpha}_k(X_i)(1-\tilde{\alpha}_k(X_i))}$, $D_{ij}\in\mathbb{R}^{M\times M}$ is a diagonal matrix with $k$-th diagonal entry $-\frac{\mathbf{1}\{k\in\ell(j)\}(1-2Y_{ik})(1-2\tilde{\alpha}_k(X_i))}{2\tilde{\alpha}_k(X_i)^2(1-\tilde{\alpha}_k(X_i))^2}$. Particularly, when $\alpha_{0k}(X_i)$ are bounded away from both $0$ and $1$, since $\tilde{\alpha}\in\mathcal{A}_x(\hat{\alpha})$ are close to $\alpha_0(\cdot)$ with high probability by the construction of $\mathcal{A}_x(\hat{\alpha})$, $\tilde{\alpha}\in\mathcal{A}_x(\hat{\alpha})$ are also bounded away from both $0$ and $1$ with high probability, then condition (vi) implies that conditions (ii) -- (iv) hold. The theoretical analysis later implies ${\displaystyle \max_{\|\tilde x-x\|\leq h, \tilde\alpha \in \mathcal{A}_x(\hat\alpha)}}M^{1/2}\|\tilde \alpha(\tilde x)-\alpha_0(\tilde x)\|$ goes to zero as $N\rightarrow\infty$. So (v) holds when $N$ is sufficiently large. The last condition also includes a general smoothness assumption on $r_0(\cdot)$ and $\alpha_0(\cdot)$. 

Next, we impose the restricted eigenvalue condition, a standard assumption in high-dimensional regularized estimation that ensures sufficient curvature on a restricted set induced by the regularization, even when the design matrix may be rank-deficient globally. For a constant $c>1$, let $\bar c:=2\frac{c+1}{c-1}$. The restricted eigenvalues for both no-covariate and with-covariate cases are
$$\begin{array}{rl}
\kappa_{\bar{c}}^2 &\displaystyle := \min_{\|v_{T^c}\|_{w,1} \leq \bar{c}\|v_T\|_{w,1}} \frac{1}{N}\sum_{i=1}^N \frac{\{W(\alpha_0,Y_i)'v\}^2}{\{1+W(\alpha_0,Y_i)'r_0\}^2\|v_T\|_{w,2}^2},\\ 
\kappa_{x,\bar c}^2 &\displaystyle := \min_{\|v_{T_x^c}\|_{w_x,h,1} \leq \bar{c}\|v_{T_x}\|_{w_x,h,1}} \frac{1}{N}\sum_{i=1}^N K_h(X_i-x) \frac{\{W(\alpha_0(X_i),Y_i)'(v_a+v_b(X_i-x))\}^2}{ \{1+W(\alpha_0(X_i),Y_i)'r_0(X_i)\}^2\|(v_a,v_b)_{T_x}\|_{w_x,h,2}^2},
\end{array}$$
where $T$ is the support of $r_0$ in the no-covariate case and $T_x$ is the support of $(r_0(x),\nabla r_0(x))$ in the covariate-case. In the definition for $\kappa_{\bar{c}}^2$, $v\in\mathbb{R}^p$, $\|v_T\|_{w,k}= (\sum_{j\in T} w_j^k|v_j|^k)^{1/k}$. For $v=(v_a',v_b')'$ in the definition for $\kappa_{x,\bar{c}}^2$, $v_a\in\mathbb{R}^p$, $v_b\in\mathbb{R}^{p\times d}$. $\|(v_a,v_b)_{T_x}\|_{w_x,h,k}=\left(\sum_{j\in T_x}w_{xj}^k|v_{aj}|^k+h^k\sum_{(j,l)\in T_x}w_{xjl}^k|v_{b,jl}|^k\right)^{1/k}$. The restricted eigenvalue condition is a standard assumption for Lasso-type estimators in high-dimensional statistics \citep{bickel2009simultaneous,buhlmann2011statistics}. However, new technical challenges arise in our analysis compared to the classical Lasso setting. For example, we need to account for the non-separability between marginal probabilities and the binary components, as manifested in the structure of the design matrix. 

\subsection{Rates of Convergence without Covariates}\label{Sec:Rates:no-covariates}
We now provide the main theoretical results for the no-covariate case. First, we establish the convergence rate of the plug-in estimator $\hat{r}^{PI}$. We first define $\displaystyle\beta_N := \max_{j\in[p]}\frac{1}{w_j}\frac{1}{N}\sum_{i=1}^N \frac{|W_j(\alpha_0,Y_i)|}{1+W(\alpha_0,Y_i)'r_0}$. 
\begin{proposition}[Plug-in estimator]\label{prop:plug-in:no-covariate}
Suppose that Assumptions \ref{ass:iid-min-den}, \ref{ass:moment-assumptions}, \ref{ass:smoothness-assumptions} hold. Suppose for some constant $C'$, we set $\lambda$ such that for $\delta>1/2^M$, with probability at least $1-\delta$, 
$$\frac{\lambda}{c}\geq\frac{C'\Phi^{-1}(1-\delta/2p)}{\sqrt{N}}\max_{j\in[p]} \frac{1}{w_j}\left\{\frac{1}{N}\sum_{i=1}^N \frac{W_j^2(\alpha_0,Y_i)}{(1+W(\alpha_0,Y_i)'r_0)^2}\right\}^{1/2}\!\!\!\!+\frac{C'K_{\alpha,1}(1+\beta_N)\sqrt{M}\|\hat\alpha-\alpha_0\|}{\xi},$$
and $sM^{1/2}K_{\alpha,1}\|\hat\alpha-\alpha_0\|/ \{\xi\kappa_{\bar c}^2\min_{j\in[p]}w_j^2\}\leq \delta_N$, where $\delta_N$ is defined in Assumption~\ref{ass:moment-assumptions}. Suppose $\kappa_{\bar{c}}^2$ is bounded away from zero and $\bar K_2 + K_{\alpha,1} + K_{\alpha,2}/\xi^2 \leq C^\prime$ for some constant $C'$. Then $$\displaystyle\sqrt{\frac{1}{N}\sum_{i=1}^N \frac{\{W(\alpha_0,Y_i)'(\hat r^{PI} - r_0 )\}^2}{(1+W(\alpha_0,Y_i)'r_0)^2}}\leq C\sqrt{\frac{s(M+\log(1/\delta))}{N}} + C\sqrt{\frac{sM^2}{N}}$$ 
holds with probability $1-C\delta-C\delta_\alpha-C\epsilon$ for $N$ sufficiently large, where constant $C$ is independent of $N,s,M$.
\end{proposition}
The choice of penalty parameters $\lambda$ for Proposition~\ref{prop:plug-in:no-covariate} and later in Theorem~\ref{thm:main:FO} follows from the results of self-normalized moderate deviation theory. Example~\ref{ex:plug-in} in Appendix~\ref{appendix:no-covariate} of the ``electronic companions'' demonstrates the sharpness of the rate established for $\hat{r}^{PI}$ in Proposition~\ref{prop:plug-in:no-covariate}.
The rate of convergence for the plug-in estimator immediately implies the convergence rate for the \textit{oracle (plug-in) estimator} with true marginal probabilities $\alpha_0$:
\begin{proposition}[Oracle (plug-in) estimator]\label{prop:PI:infeasible}
Suppose $\hat{\alpha}=\alpha_0$, then under the conditions of Proposition \ref{prop:plug-in:no-covariate}, $\displaystyle\sqrt{\frac{1}{N}\sum_{i=1}^N \frac{\{W(\alpha_0,Y_i)'(\hat r^{PI} - r_0 )\}^2}{(1+W(\alpha_0,Y_i)'r_0)^2}} \leq  C\sqrt{\frac{sM+s\log(1/\delta)}{N}}$ holds with probability $1-C\delta-C\delta_\alpha-C\epsilon$, where $C$ is a constant independent of $N,s,M$.
\end{proposition}
Proposition~\ref{prop:plug-in:no-covariate} implies that the rate of convergence for $\hat{r}^{PI}$ is $\mathrm{O}_{\mathrm{p}}(\sqrt{sM^2/N})=\mathrm{O}_{\mathrm{p}}(\log p\sqrt{s/N})$ if $s\geq1$ and $M^2>\log(1/\delta)$, which is slower than that of $\hat{r}^{FO}$ as shown by Theorem~\ref{thm:main:FO} in the below:
\begin{theorem}[First-order estimator]\label{thm:main:FO}
Suppose Assumptions \ref{ass:iid-min-den}, \ref{ass:moment-assumptions}, \ref{ass:smoothness-assumptions} hold. Suppose for some constant $C'>1$, with probability at least $1-\delta$ we have\\
(i) $\lambda/(2c) \geq C'N^{-1/2} \Phi^{-1}(1-\delta/2p)\max_{j\in[p]} \frac{1}{w_j}\left\{\frac{1}{N}\sum_{i=1}^N \frac{W_j^2(\alpha_0,Y_i)}{(1+W(\alpha_0,Y_i)'r_0)^2}\right\}^{1/2}$, \\
(ii) $\lambda/(2c) \geq C'(1+\beta_N)\frac{K_{\alpha,2}}{\xi}\norm{\hat{\alpha}-\alpha_0}^2$,\\
(iii) $s\|\hat\alpha-\alpha_0\|^2 K_{\alpha,2} \bar K_3 / \{\xi\kappa_{\bar c}^2 {\displaystyle \min_{j\in[p]}} w_j^2\}\leq \delta_N$. \\
Then for $N$ sufficiently large, with probability $1- C\delta - C\delta_\alpha-C\epsilon$, $$\displaystyle\sqrt{\frac{1}{N}\sum_{i=1}^N \frac{\{W(\alpha_0,Y_i)'(\hat r^{FO} - r_0 )\}^2}{(1+W(\alpha_0,Y_i)'r_0)^2}}\leq C\lambda\sqrt{s}\frac{(1+1/c)}{\kappa_{\bar{c}}}+C\widetilde{\mathcal{R}}^{1/2}(\hat\alpha),$$ 
where $\widetilde{\mathcal{R}}(\hat\alpha) \leq C\bar{K}_2\sqrt{\frac{\log (M/\delta)}{N}}{\displaystyle\max_{\alpha \in \mathcal{A}(\hat\alpha)}}\|\alpha-\alpha_0\|_1  + C\frac{K_{\alpha,2}}{\xi^2}{\displaystyle\max_{\alpha \in \mathcal{A}(\hat\alpha)}}\|\alpha - \alpha_0\|^2$ and $C$ is independent of $s,N,M$.
\end{theorem}
The self-normalized moderate deviation theory essentially requires $\lambda$ to dominate the maximum norm of the score. The maximum norm of the score for the plug-in estimator is $\left\|\frac{1}{N}\sum_{i=1}^N\frac{W(\hat{\alpha},Y_i)}{1+W(\hat\alpha,Y_i)'r_0}\right\|_\infty=\left\|\frac{1}{N}\sum_{i=1}^N\frac{W(\alpha_0,Y_i)}{1+W(\alpha_0,Y_i)'r_0}\right\|_\infty+\mathrm{O}_p(\|\hat{\alpha}-\alpha_0\|_1)$, where $\left\|\frac{1}{N}\sum_{i=1}^N\frac{W(\alpha_0,Y_i)}{1+W(\alpha_0,Y_i)'r_0}\right\|_\infty=\mathrm{O}_p(\sqrt{\frac{\log(p)}{N}})=\mathrm{O}_p(\sqrt{\frac{M}{N}})$ and $\mathrm{O}_p(\|\hat{\alpha}-\alpha_0\|_1)=\mathrm{O}_p(\frac{M}{\sqrt{N}})$, we need to choose $\lambda\asymp\mathrm{O}(M/\sqrt{N})$ for $\hat{r}^{PI}$. For the first-order estimator, $\left\|\frac{1}{N}\sum_{i=1}^N\frac{W^{FO}(\alpha_0,Y_i)}{1+W^{FO}(\alpha_0,Y_i)'r_0}\right\|_\infty=\left\|\frac{1}{N}\sum_{i=1}^N\frac{W(\alpha_0,Y_i)}{1+W(\alpha_0,Y_i)'r_0}\right\|_\infty+\mathrm{O}_p(\|\hat{\alpha}-\alpha_0\|^2)$, where $W^{FO}(\alpha_0,Y_i)=W(\hat\alpha,Y_i)+\nabla_{\alpha} W(\hat\alpha,Y_i)(\alpha_0-\hat\alpha)$, which is a first-order expansion of $W(\alpha,Y_i)$ around $\hat{\alpha}$, so it introduces only a second-order bias in the score. This bias is smaller than the bias induced by directly plugging $\hat{\alpha}$ into the score. The penalty choice for $\hat{r}^{FO}$ is then $\lambda\asymp\mathrm{O}(\sqrt{M/N})$. By incorporating adversarial selection of $\alpha$, the first-order estimator regularizes the dependence of the score on $\hat{\alpha}$ and mitigates overfitting. As a result, a smaller penalty level $\lambda$ suffices compared with the plug-in estimator, leading to a lower estimation error. Moreover, the formulation of $\hat{r}^{FO}$ circumvents the non-concavity of the maximum likelihood objective and other computational challenges inherent in adversarial estimation of the marginal probabilities. 

The following corollary specializes Theorem~\ref{thm:main:FO} to a representative case, providing a precise convergence rate under the specific choice of the set $\mathcal{A}(\hat{\alpha})$ defined in~\eqref{def:A:no-covariate}, which admits efficient computation. The details of the construction of $\mathcal{A}(\hat\alpha)$ are in Section \ref{sec:marginals}.
\begin{corollary}\label{cor:FO}
    Under the conditions of Theorem \ref{thm:main:FO}, suppose $\kappa_{\bar{c}}$ is bounded away from zero and $\bar K_2 + K_{\alpha,1} + K_{\alpha,2}/\xi^2 \leq C^\prime$ for some constant $C'$, and $\mathcal{A}(\hat{\alpha})$ is constructed as~\eqref{def:A:no-covariate}. Then $$\displaystyle\sqrt{\frac{1}{N}\sum_{i=1}^N \frac{\{W(\alpha_0,Y_i)'(\hat r^{FO} - r_0 )\}^2}{(1+W(\alpha_0,Y_i)'r_0)^2}} \leq  C\sqrt{\frac{s(M+\log(1/\delta))}{N}} + C\sqrt{\frac{M\log(M/\delta)}{N}}$$ 
    holds with probability $1-C\delta-C\delta_\alpha-C\epsilon$, where $\delta\leq\delta_{\alpha}$, and $C$ is independent of $s,N,M$. \proofend
\end{corollary}
Corollary \ref{cor:FO} shows that the impact of estimated marginal probabilities on the first-order estimator is captured by $\widetilde{\mathcal{R}}(\hat\alpha)=\mathrm{O}_{\mathrm{p}}\left(\frac{M\log M}{N}\right)$ defined as in Theorem~\ref{thm:main:FO}. In many regimes of interest (e.g., $s \gg \log M$), this term is expected to be small as the adversarial estimation is over a nuisance parameter with dimension $M \ll p$. Note that the $\log M$ factor arises from the $\ell_\infty$ construction of $\mathcal{A}(\hat{\alpha})$ in~\eqref{def:A:no-covariate}, which requires controlling the maximum deviation across the $M$ marginal probability estimators. This factor could be avoided by instead defining $\mathcal{A}(\hat{\alpha})$ using an $\ell_2$ bound on the marginal estimation error (i.e., Euclidean balls). However, from a computational standpoint, representing the adversarial set as a hyper-rectangle is considerably more tractable, because the minimum of the inner optimization problem in~\eqref{eq:PMLE-no-covariates-bilinear} is attained at one of its finitely many extreme points. Since $M \asymp \log p$, Corollary~\ref{cor:FO} implies that the first-order estimator attains a convergence rate of order $\sqrt{sM/N}+\sqrt{M\log(M)/N}$. The rate for the oracle estimator with $\alpha_0$ known is of order $\sqrt{sM/N}$. So the first-order estimator is rate optimal when $s\gtrsim\log(M)$. Table \ref{table:comparison} compares all the aforementioned estimators when $s\gtrsim\log M$, showing the trade off between rates of convergence and computational complexity. 
\begin{table}[htp!]
\centering
\caption{Comparison of estimators when $s\gtrsim\log(M)$.}
\begin{tabular}{p{2.0cm}p{2.0cm}p{2.cm}p{2.8cm}p{4.8cm}}
\toprule
Estimator & Order of $\lambda$ & Rate & Optimize over $r$ & Evaluation over $\alpha$ \\
\midrule
MLE & $0$ & $\sqrt{p/N}$ & concave & non-concave maximization \\
Plug-in ($\hat{\alpha}$) & $\log p/\sqrt{N}$ & $\sqrt{s/N}\log p$ & concave & direct \\
Adversarial & $\sqrt{\log p/N}$ & $\sqrt{s\log p/N}$ & concave & non-convex minimization \\
First-order & $\sqrt{\log p/N}$ & $\sqrt{s\log p/N}$ & concave & tractable minimization \\
Oracle ($\alpha_0$) & $\sqrt{\log p/N}$ & $\sqrt{s\log p/N}$ & concave & direct \\
\bottomrule
\end{tabular}
\begin{tablenotes}
\small
\item[a] All estimators maximize a concave function in $r$, but differ in how the nuisance parameters $\alpha$ affect the penalty choice and evaluation over $\alpha$.
\end{tablenotes}
\label{table:comparison}
\end{table}

\subsection{Rates of Convergence with Covariates}\label{Sec:Rates:covariates}
The intuition from the no-covariate case extends to the with-covariate case. We therefore focus on the localized first-order estimator $\hat{r}^{FO}(\cdot,x)$. The analysis builds upon the no-covariate case but must additionally control approximation errors from the local approximation of $r_0(X)$ and $\alpha_0(X)$. Constructing the adversarial set pointwise in $X_i$ would introduce non-negligible bias. To address these challenges, we focus on a class of local linear functions for estimating the marginal probabilities. The next result provides rates of convergence for the generalized correlation coefficients in the with-covariate case. Define $\beta_{N,x}:={\displaystyle\max_{j\in[p],k\in[d]}}\left\{\frac{1}{w_{xj}}\frac{1}{N}\sum_{i=1}^N\frac{K_h(X_i-x)|W_j(\alpha_0(X_i),Y_i)|}{1+W(\alpha_0(X_i),Y_i)^\prime r_0(X_i)},\frac{1}{w_{xjk}}\frac{1}{N}\sum_{i=1}^N\frac{K_h(X_i-x)|W_j(\alpha_0(X_i),Y_i)(X_{ik}-x_k)/h|}{1+W(\alpha_0(X_i),Y_i)^\prime r_0(X_i)}\right\}$.
\begin{theorem}\label{thm:main:covariate}
Suppose Assumptions \ref{ass:iid-min-den}, \ref{ass:moment-assumptions}, \ref{ass:smoothness-assumptions} hold. Moreover suppose that for some $C'>1$, the conditions below hold with probability at least $1-\delta$, where $\delta\geq\max\{\exp(-C'\{\sqrt{Nh^d\log M}+Nh^d\}),\frac{\log(M)}{\delta_NNh^d},\frac{1}{2^M}\}$,
$$\begin{array}{rl}
\lambda_x/(3c) &\displaystyle \geq \frac{C'\Phi^{-1}(1-\delta/2p(1+d))}{(Nh^d)^{1/2}}{\displaystyle \max_{j\in [p]}} \frac{1}{w_{xj}} \sqrt{\frac{1}{N}\sum_{i=1}^N\frac{K_h(X_i-x)W_j^2(\alpha_0(X_i),Y_i)}{(1+W(\alpha_0(X_i),Y_i)'r_0(X_i))^2}},\\
\lambda_x/(3c) &\displaystyle \geq \frac{C'\Phi^{-1}(1-\delta/2p(1+d))}{(Nh^d)^{1/2} }{\displaystyle \max_{j\in [p], k\in[d]}} \frac{1}{w_{xjk}} \sqrt{\frac{1}{N}\sum_{i=1}^N\frac{K_h(X_i-x)W_j(\alpha_0(X_i),Y_i)^2\big(\frac{X_{ik}-x_k}{h}\big)^2}{(1+W(\alpha_0(X_i),Y_i)'r_0(X_i))^2}},\\
\lambda_x/(3c) &\displaystyle \geq C'(1+\beta_{N,x})K_{\alpha,2}\max_{\tilde \alpha\in\mathcal{A}_x(\hat\alpha), \|X_i-x\|\leq h}\norm{\tilde{\alpha}(X_i)-\alpha_0(X_i)}^2/\xi+C'h^2,\\
\delta_N &\displaystyle \geq \max_{\tilde \alpha\in\mathcal{A}_x(\hat\alpha), \|X_i-x\|\leq h}\|\tilde\alpha(X_i)-\alpha_0(X_i)\|^2s/ \{\xi\kappa_{x,\bar c}^2 \min_{j\in[p], k\in[d]} \{w_{xjk}^2,w_{xj}^2\}\},\\
\delta_N&\displaystyle \geq \max_{\tilde \alpha\in\mathcal{A}_x(\hat\alpha), \|X_i-x\|\leq h}\|\tilde\alpha(X_i)-\alpha_0(X_i)\|/ \{\xi\min_{j\in[p], k\in[d]}\{w_{xjk},w_{xj}\}\}.
\end{array}$$ 
Then with probability $1-C\delta$ we have
{$$ \sqrt{\frac{1}{N}\sum_{i=1}^N \frac{K_h(X_i-x)\{W(\alpha_0(X_i),Y_i)'(\hat r^{FO}(X_i,x) - r_{*x}(X_i) )\}^2}{(1+W(\alpha_0(X_i),Y_i)'r_0(X_i))^2}} \leq C\frac{\lambda_x\sqrt{s}}{\kappa_{x,\bar{c}}} +  \widetilde{\mathcal{R}}^{1/2}(\hat\alpha),$$}
where with probability $1-C\delta-C\delta_\alpha-\frac{C(\bar{K}_{\max}/\bar{K}_2)^2}{Nh^d}-C\epsilon$,
$$\begin{array}{rl}
\widetilde{\mathcal{R}}(\hat\alpha) &\displaystyle\leq C\max_{\|X_i-x\|\leq h,\alpha \in \mathcal{A}_x(\hat\alpha)}\|\alpha(X_i)-\alpha_0(X_i)\|_1 \sqrt{\frac{\log(M/\delta)}{Nh^d}}+ Csdh^2\sqrt{\frac{\log(1/\delta)}{Nh^d}}+Cs^2d^2h^4\\
&\displaystyle\quad + C\max_{\|X_i-x\|\leq h,\alpha \in \mathcal{A}_x(\hat\alpha)} \|\alpha(X_i)-\alpha_0(X_i)\|^2+Csdh^2\left(\frac{Md}{\sqrt{Nh^{d}}}+Mdh^2\right),
\end{array}$$
and $C$ is a universal constant independent of $s,M,N,h,d$. 
\end{theorem}
Assumption \ref{ass:local-linear} in the below is standard in the literature of local-linear regression \citep[e.g.][]{ruppert1994multivariate,fan1993local,lu1996multivariate, fan2003nonlinear}:
\begin{assumption}\label{ass:local-linear}
(i)$\int uu^TK(u)du=\mu_2\mathbf{I}_{d}$, where $\mu_2\neq0$ is a scalar and $\mathbf{I}_{d}$ is the $d\times d$ identity matrix. (ii) $X_i$'s have common density $\rho$ with compact support $\mathcal{X}\subseteq\mathbb{R}^d$ such that $\rho(\cdot)\geq\rho_0>0$ on its support (where $\rho_0$ is a constant) and $\rho$ is continuously differentiable for any $x$ in the interior of $\mathcal X$. (iii) For any $j\in[M]$, $\alpha_j(x)$ is continuously differentiable, and its second-order derivatives are continuous. (vi)  $K(\cdot): \mathbb{R}^d\mapsto\mathbb{R}$ is supported on $\{u:\|u\|\leq1\}$. For any $\norm{z}_2\leq1$, $K(-z)=K(z)$ and $\underline{\kappa}\leq K(z)\leq\mathcal{K}$, where $\underline{\kappa}$, $\mathcal{K}$ are positive absolute constants.
\end{assumption}
Assumption~\ref{ass:local-linear} is satisfied by many commonly used kernels, including the uniform (box) kernel $K(u)=\mathbf{1}\{\|u\|\leq1\}$ and the quadratic-positive kernel $K(u)=\mu_0(1+\mu_1\|u\|^2)\mathbf{1}\{\|u\|\leq1\}$. Similar properties hold for other compact-support kernels commonly used in nonparametric estimation. 
\begin{corollary}\label{cor:FO:covariate}
Suppose the conditions of Theorem \ref{thm:main:covariate} and  Assumption \ref{ass:local-linear} hold. Suppose $\kappa_{x,\bar{c}}$ is bounded away from zero,  $\bar{K}_2 + K_{\alpha,1} + K_{\alpha,2}/\xi^2 \leq C'$ for a universal constant $C'$. Let the adversarial set $\mathcal{A}_x(\hat{\alpha})$ be constructed as in Section \ref{sec:marginals}. Define $\displaystyle\Delta_{x,i}^{FO}:=\left\{\frac{W(\alpha_0(X_i),Y_i)'(\hat r^{FO}(X_i,x) - r_{*x}(X_i))}{1+W(\alpha_0(X_i),Y_i)'r_0(X_i)}\right\}^2$. Then with probability $1-C\delta-C\delta_\alpha-C\epsilon-C(\bar{K}_{\max}/\bar{K}_2)^2/(Nh^d)$, where $\delta\leq\delta_{\alpha}$, we have
\begin{equation}\label{eq:cor:cov:bound-1}
\begin{array}{rl}
&\displaystyle\quad\sqrt{\frac{1}{N}\sum_{i=1}^N K_h(X_i-x)\Delta_{x,i}^{FO}}\\
&\displaystyle\leq C\sqrt{\frac{sM+s\log(1/\delta)}{Nh^d}}+C(s+\sqrt{M})dh^2,
\end{array}
\end{equation}
where $C$ is a universal constant independent of $s,N,M,h,d$. Henceforth, the optimal bandwidth is $h^*=\min\left\{\sqrt{\frac{\delta_N\xi}{sd}},\max\left\{\left(\frac{M^3}{N\delta_N^2}\right)^{1/d},4^{-\frac{2}{d+4}}\left(\frac{sM+s\log(1/\delta)}{N(s+\sqrt{M})^2}\right)^{\frac{1}{d+4}}\right\}\right\}$.
Particularly, when $h=h^*$, with probability $1-\delta$, we have 
\begin{equation}\label{eq:bound:optimal-choice-h}
\sqrt{\frac{1}{N}\sum_{i=1}^N K_h(X_i-x)\Delta_{x,i}^{FO}}\leq\bar{C}\min\left\{B_{1},B_2\right\},
\end{equation}
where $B_1:=\delta_N\sqrt{\frac{s(M+\log(1/\delta))}{M^3}}+(\sqrt{M}+s)d\left(\frac{M^3}{N\delta_N^2}\right)^{\frac{2}{d}}$,
$B_2:=\sqrt{\frac{s(M+\log(1/\delta))}{N}}\left(\frac{sd}{\delta_N\xi}\right)^{d/4}\!\!+\frac{(\sqrt{M}+s)\delta_N\xi}{s}$, and $\bar{C}$ is a constant independent of $s,N,M,h,d$.\proofend
\end{corollary}

\section{Construction of the Adversarial Sets}\label{sec:marginals}
 This section details the construction of the adversarial sets for the no-covariate and with-covariate cases. The key idea is to construct $\mathcal{A}(\hat{\alpha})$ and $\mathcal{A}_x(\hat{\alpha})$ as hyper-rectangles, so that the minimum of the inner optimization problem with respect to $\alpha$ in the first order estimators is attained at one of their finitely many extreme points, thereby enabling efficient computation. Recall that for the no-covariate case, we consider the sample average estimators \eqref{eq:SAA} as the initial estimator for the marginal probabilities. We construct the adversarial set in the no-covariate case as
\begin{equation}\label{def:A:no-covariate}
\mathcal{A}(\hat\alpha) :=\prod_{j=1}^M [\hat\alpha_j - cv_{\delta_\alpha} \hat s_j, \hat\alpha_j + cv_{\delta_\alpha} \hat s_j],  
\end{equation}
where $\hat s_j= \sqrt{\hat\alpha_j(1-\hat\alpha_j)}$ and $cv_{\delta_\alpha} := (1-\delta_\alpha)\mbox{-quantile of} \max_{j\in[M]} \left| \frac{1}{N}\sum_{i=1}^N \xi_i \frac{(Y_{ij} - \hat \alpha_j)}{\sqrt{\hat\alpha_j(1-\hat\alpha_j)}}\right|$ conditioning on $\{Y_i\}_{i\in[N]}$ for $\delta_\alpha \in (0,1)$, and $\{\xi_i\}_{i\in[N]}$ are i.i.d. $\mathcal{N}(0,1)$ random variables. For the with-covariate case, recall that we use local linear regression estimators for the marginal probabilities. Specifically, we compute $(\tilde{\alpha}_{j}(x), \tilde\beta_j(x)) \in \arg\min_{(\alpha_j,\beta_j)}\sum_{i=1}^N K_h(X_i-x)(Y_{ij} - \alpha_j - \beta_j(X_i-x) )^2$ for any $j\in[M]$. Since $\alpha_0(x)\in[\eta,1-\eta]$ by Assumption~\ref{ass:iid-min-den}, we truncate $\tilde{\alpha}_{j}(x)$ and set
\begin{equation}\label{eq:truncate-local-linear}
\hat{\alpha}_j(x):=\min\{\max\{\eta,  \tilde{\alpha}_j(x)\} , 1-\eta\},\ \ \forall j\in[M].
\end{equation}
We construct $\mathcal{A}_x(\hat\alpha)$ as a set of linear functions to contain the function $\alpha_{*x}(X_i):=\alpha_0(x)+\nabla_x \alpha_0(x)(X_i-x)$ with high probability. Specifically,
$$\mathcal{A}_x(\hat\alpha) :=\prod_{j=1}^M \left\{ \!\alpha_j+\beta_j(X-x) :\!\!\! \begin{array}{l}\alpha_j \in [\hat \alpha_j(x) - cv_x \hat s_{j,0}(x), \hat\alpha_j(x) + cv_x \hat s_{j,0}(x)],  \\ 
\beta_{jk} \in [\tilde \beta_{jk}(x) - cv_x \hat s_{j,k}(x), \tilde \beta_{jk}(x) + cv_x \hat s_{j,k}(x)], k \in [d] 
\end{array}\!\!\right\},$$
and conditional on $\{Y_i,X_i\}_{i=1\in[N]}$, $cv_x := (1-\delta_\alpha)\mbox{-quantile of } {\displaystyle\max_{ j \in  [M], \ell \in\{0\}\cup[d]}}\displaystyle\left|\sum_{i=1}^N \xi_i \frac{\hat \epsilon_{ij} \hat{\zeta}_{i\ell}}{\hat s_{j,\ell}(x)}\right|$, where $\{\xi_i\}_{i\in[N]}$ are i.i.d. $\mathcal{N}(0,1)$ random variables, $\hat\epsilon_{ij}:=Y_{ij} - \hat{\alpha}_j(x)-\tilde\beta_j(x)(X_i-x)$, $\hat{\zeta}_{i} := (Z'W_xZ)^{-1}Z_i K_h(X_i-x)$  with $Z_i=(1, (X_i-x)')'$ and $W_{x}$ is a diagonal $N\times N$ matrix with its $i$-th diagonal entry equal to $W_{x,ii}:=K_h(X_i-x)$, $\hat s_{j,\ell}(x) := \sqrt{\sum_{i=1}^N \hat \epsilon_{ij}^2 \hat{\zeta}_{i\ell}^2}$, and $\hat{\zeta}_{i\ell}$ is the $\ell$-th component of vector $\hat{\zeta}_i$, $\forall \ell\in\{0\}\cup[d]$.

\section{Application to Estimating the Average Treatment Effects under Multiple Binary Treatments}\label{section:causal}
In this section, we apply our theoretical results to the estimation of average treatment effects in the presence of $M$ binary treatments. We consider a standard potential outcome framework \citep{imbens2015causal}. For a treatment vector $t \in \{0,1\}^M$, the $j$th treatment is treated if and only if $t_j = 1$. We observe the triplets $Z_i=(O_i,T_i,X_i)$ across $i\in[N]$, where $O_i\in\mathbb{R}$ denotes the outcome of interest, $T_i \in \{0,1\}^M$ denotes the treatment assortments, and $X_i \in \mathbb{R}^d$ is the covariate. We are interested in estimating the \textit{average treatment effect} (ATE) given two treatment levels $t$ and $t'$, namely $\tau(t,t') = \mathbb{E}[ O(t) - O(t')]$, where $O(t)$ denotes the potential outcome of receiving the treatment $t\in \{0,1\}^M$. We consider the cross-fitted \textit{augmented inverse probability weighted} (AIPW) estimator \citep[e.g.][]{robins1994estimation,robins1995semiparametric,hahn1998role,scharfstein1999adjusting,kennedy2020towards,newey2018cross}. 
Specifically, we split the data into $K$ disjoint folds of size $N/K$ and we use $\mathcal{I}_k$ to denote the $k$-th fold, for $k\in[K]$. It is not required that all the folds have the same size, but we proceed with this merely for simplicity, without loss of generality. For any $t\in\{0,1\}^M$, we let $\hat{\mu}_{(t)}^{-k}(X)$ and $\hat e_t^{-k}(X)$ denote the estimators for $\mu_{(t)}(X)=\mathbb{E}[O|T=t,X]$ and $e_t(X)=\mathbb{P}(T=t|X)$ without using observations from fold $k$. Then we estimate the average treatment effect using
$$\hat \tau(t,t')=\!\frac{1}{K}\!\!\sum_{k=1}^K\frac{1}{|\mathcal{I}_k|}\!\sum_{i\in\mathcal{I}_k}\!\left(\hat\mu_{(t)}^{-k}(X_i)\!-\!\hat \mu_{(t')}^{-k}(X_i)\!+\!\frac{1\{T_i=t\}}{\hat e_t^{-k}(X_i)}\!\!\left(O_i - \hat\mu_{(t)}^{-k}(X_i)\right) - \frac{1\{T_i=t'\}}{\hat e_{t'}^{-k}(X_i)}\!\!\left(O_i - \hat\mu_{(t')}^{-k}(X_i)\right)\right).$$
The generalized propensity scores are estimated based on (\ref{def:localized:alpha:estimator}) and the first-order estimator $\hat{r}^{FO}(\cdot,x)$ at the given covariate of interest $x$:
\begin{equation}\label{eq:est:GPS}
\hat e_t(x)= \Big(1+W(\hat\alpha(x),t)'\hat r^{FO}(x,x)\Big)\prod_{j=1}^M\hat\alpha_j(x)^{t_j}(1-\hat\alpha_j(x))^{1-t_j},
\end{equation}
We assume that the sample contains observations with treatment $t$ as well as observations receiving treatment $t'$. We impose the following regularity conditions:
\begin{assumption}\label{ass:causal:extra}
Suppose (i) $\{O_i(t)\}_{t\in\{0,1\}^M} \indep T_i \mid X_i$; (ii)  there is constant $c \in (0,1)$ such that  $c\prod_{j=1}^M \alpha_{0j}^{t_j}(x)(1-\alpha_{0j}(x))^{1-t_j} < e_t(x) < 1- c, \forall x \in \mathcal{X}$, for any $t\in\{0,1\}^M$. (iii) For any $t\in\{0,1\}^M$, $\sup_{x\in\mathcal{X}}|\hat\mu_{(t)}(x)|\leq\bar{\mu},\max_{i\in[N]}|O_i(t)|\leq\bar{\mu}$ for some absolute constant $\bar{\mu}>0$, $\mathbb{E}[\{\hat \mu_{(t)}(X_i) - \mu_{(t)}(X_i)\}^2]\leq\epsilon_N^2N^{-1+2\gamma}$, $\sqrt{\frac{1}{N}\sum_{i=1}^N \{\hat \mu_{(t)}(X_i) - \mu_{(t)}(X_i) \}^2} \leq \epsilon_N N ^{-1/2+\gamma}$ with probability $1-\epsilon_N$ 
for some $0<\gamma\leq1/4$, where $\epsilon_N\to0$.
\end{assumption}

Assumption \ref{ass:causal:extra} imposes the standard unconfoundness assumption and a version of the overlapping condition. Since we allow $M$ to go to infinity, the traditional overlapping condition for generalized propensity scores (i.e. $\eta<e_t(x)<1-\eta$ for $\forall x\in\mathcal{X}, t\in\{0,1\}^M$, where $\eta\in(0,1)$ is a constant) doesn't hold. Instead, we impose condition (ii) of Assumption \ref{ass:causal:extra}, which allows for some assortment of treatments to be common but no assortment is exceedingly rare relative to the case where treatments are assigned independently. Condition (iii) on the conditional mean estimation ensures sufficient smoothness. 
\begin{theorem}\label{thm:causal-inference:AIPW}
Suppose Assumptions \ref{ass:iid-min-den}, \ref{ass:moment-assumptions}, \ref{ass:smoothness-assumptions}, \ref{ass:local-linear}, \ref{ass:causal:extra} hold. Suppose $\kappa_{x,\bar{c}}$ is bounded away from zero, $\bar{K}_2 + K_{\alpha,1} + K_{\alpha,2}/\xi^2\leq C'$ and $\mathcal{A}_x(\hat{\alpha})$ is constructed as before. Additionally, suppose that $0<\gamma\leq\frac{1}{4}$, $\psi_NN^\gamma<1$, $\psi_NN^\gamma\to0$, and for some constant $C$,
\begin{equation}\label{consistency:ATE:condition}
\begin{array}{rl}   
    &\displaystyle\quad\sqrt{s}\left(\sqrt{\frac{sM}{Nh^d}}\!+(s+\sqrt{M})dh^2\right)\\
    &\displaystyle\leq\frac{C\psi_N}{\sup_{x\in\mathcal{X}}\!\prod_{j=1}^M\!\alpha_{0j}(x)^{t_j}(1-\alpha_{0j}(x))^{1-t_j}},
\end{array}
\end{equation} 
\begin{equation}\label{consistency:ATE:2}
Mdh^2+\frac{M}{\sqrt{Nh^d}}\leq C\psi_N,
\end{equation}
where $M=\mathrm{o}(\min\{\log(1/\psi_N),\log(1/\epsilon_N)\})$, then we have $\sqrt{N}\hat \Sigma^{-1/2} (\hat \tau(t,t') - \tau(t,t')) \to\mathcal{N}(0,1)$, where $\hat{\Sigma}$ is the semiparametric efficiency variance bound. \proofend
\end{theorem}
Theorem \ref{thm:causal-inference:AIPW} shows that independence among some treatments induces sparsity in $r_0(x)$ and substantially improves estimation efficiency when $s\ll 2^M$. 
Remark~\ref{rmk:causal:1} and Remark~\ref{rmk:causal:2} show the compatibility of the conditions in Theorem~\ref{thm:causal-inference:AIPW} and provide two examples of $(N,M,d,s)$ satisfying them.
\begin{remark}\label{rmk:causal:1}
According to condition (iii) of Assumption \ref{ass:causal:extra} and the conditions of Theorem \ref{thm:causal-inference:AIPW}, we establish that the cross-term, given by the product of the outcome and propensity score estimation errors, is $\mathrm{o}_{\mathrm{P}}(N^{-1/2})$. We allow the rate of convergence for the propensity score estimator to be slower than the classical $\mathrm{o}_{\mathrm{P}}(N^{-1/4})$ to adapt to the constraints with the covariate dimension $d$ for the kernel estimator. As a result, we require the rate of convergence for the outcome estimator to be faster than $\mathrm{o}_{\mathrm{P}}(N^{-1/4})$ in condition (iii) of Assumption \ref{ass:causal:extra}. 
This assumption holds in some general setup. For example, suppose $\hat{\mu}_{(t)}(\cdot)$ is a parametric outcome model that can be estimated at $\sqrt{N}$ rate consistently, then if $\epsilon_N^2N^{2\gamma}\geq\mathbb{E}[N\{\hat \mu_{(t)}(X_i) - \mu_{(t)}(X_i)\}^2]$, and it is sufficient to let $\epsilon_NN^{\gamma}\rightarrow\infty$. Further, by leveraging moderate deviation theory in this case, under proper moment conditions, with probability $1-\delta$, $\max_{t\in\{0,1\}^M}\sqrt{\frac{1}{N}\sum_{i=1}^N\{\hat{\mu}_{(t)}(X_i)-\mu_{(t)}(X_i)\}^2}\leq \frac{c'\log(p/\delta)}{\sqrt{N}}$, where $p=2^M$, $c'$ is some absolute constant depending on the corresponding moment conditions. Hence if additionally we have $M\leq\epsilon_NN^{\gamma},\ \mbox{where} \ 0<\gamma\leq\frac{1}{4}$, condition (iii) of Assumption \ref{ass:causal:extra} holds.
\proofend 
\end{remark}
\begin{remark}\label{rmk:causal:2}
For Assumption~\ref{ass:moment-assumptions}, all conditions of Theorem~\ref{thm:causal-inference:AIPW} and the sufficient conditions implied by Remark~\ref{rmk:causal:1} for (iii) of Assumption~\ref{ass:causal:extra} (namely $\epsilon_NN^{\gamma}\rightarrow\infty$, $M\leq\epsilon_NN^{\gamma}$) to hold, 
by direct calculation, we need $d<\frac{2}{\gamma}-4$.
Particularly, if $\gamma=\frac{1}{4}$, then the classical $\mathrm{o}_{\mathrm{P}}(N^{-1/4})$-consistency condition is implied, and we can only let $d\leq3$ when $\gamma=1/4$. As $\gamma$ decreases we allow higher values of covariate dimension. Recall that we assume the covariate dimension $d$ is low so that the kernel method is effective. Particularly, two possible choices are to both take $h\asymp\left(\frac{\log(NMd)}{Nd^2}\right)^{1/(d+4)}$, and set $s=\log M, M=\mathrm{o}\left(N^{2/(d+4)}\psi_N[\log(NMd)]^{-\frac{2}{d+4}}d^{-\frac{d}{d+4}}\right)$ or $s=\sqrt{M}, M=\mathrm{o}\big(N^{\frac{8}{5(d+4)}}\psi_N^{\frac{4}{5}}[d\log(NMd)]^{-\frac{8}{5(d+4)}}\big)$, then the conditions in Theorem \ref{thm:causal-inference:AIPW} and Assumption~\ref{ass:moment-assumptions} are all satisfied. \proofend
\end{remark}
Our result is related to the doubly robust kernel estimators. The condition in standard semiparametric settings requires $\mathrm{o}_{\mathrm{P}}(N^{-1/4})$ consistency (e.g. \citep{chernozhukov2017double}). For non-parametric methods, \citet{kennedy2017non} estimates continuous treatment effect and derives $\sqrt{N}$-local rate of convergence with a kernel smoothing approach, by requiring that $h=\mathrm{O}(N^{-1/5})$, and the outcome estimation and propensity score estimation are both required to be of order $\mathrm{o}_{\mathrm{P}}(N^{-1/5})$. \citet{rothe2019properties} proposes a semiparametric two-step doubly robust estimators by requiring $\mathrm{o}_{\mathrm{P}}(N^{-1/6})$ consistency for both outcome and propensity score estimation, as long as their product is of order $\mathrm{o}_{\mathrm{P}}(N^{-1/2})$. 
Our estimator allows a slower rate of convergence for estimating the generalized propensity scores by imposing additional sparsity assumptions with a prudent choice of $N,M,d,h$. 

\section{Numerical Simulations}\label{sec:simulations}
For the no-covariate case, we compare $\hat{r}^{PI}$ (\ref{eq:PMLE-no-covariates}) and $\hat{r}^{FO}$ (\ref{eq:PMLE-no-covariates-bilinear}) by running simulations with $s=2,M=4,N=500$ and $s=5, M=4, N=500$. We focus on two types of penalty weighting by specifying different values for $\{w_k\}_{k\in[2^M-M-1]}$: (I) $w_{k}=\{\frac{1}{N}\sum_{i=1}^NW_{k}(\hat{\alpha},Y_i)^2\}^{1/2}$; (II) $w_{k}=\{\frac{1}{N}\sum_{i=1}^N W_{k}^2(\hat{\alpha},Y_i)/(1+W_i^\prime r_0)^2\}^{1/2}$, where in type (\textrm{II}), we use the true correlation coefficient $r_0$. We denote the estimators corresponding to these two weightings choices as $w_{\textrm{I}}$ and $w_{\textrm{II}}$ respectively. PI (oracle) refers to plug-in estimator with true marginal probabilities $\alpha_0$. For any $y\in\{0,1\}^M$, let $p_y$ be the true probability of vector $y$, and $\hat{p}_y=
(1+W(\hat{\alpha},y)^\prime \hat{r})\prod_{j=1}^M\hat{\alpha}_j^{y_j}(1-\hat{\alpha}_j)^{1-y_j}$. We compute three metrics, \textit{root mean squared error} (RMSE) ($\mathbb{E}[\norm{\hat{r}-r_0}_2^2]^{1/2}$), \textit{maximum absolute probability estimation error} ($\mathbb{E}[\max_{y\in\{0,1\}^M}|\hat{p}_y-p_y|]$), and \textit{mean expected probability estimation error} ($\mathbb{E}[\sum_{y\in\{0,1\}^M}p_y|\hat{p}_y-p_y|]$) for the estimation of all $2^M$ probabilities. When $\hat{p}_y \notin [0,1]$, we truncate it to $[0,1]$. The expectations in the metrics are computed by averaging over $200$ independent replications under random environments. $\lambda_{cv}$ refers to the penalty chosen via cross-validation, and $\lambda_{\mathrm{theory}}$ denotes the theoretically derived penalty parameter, chosen at the scale prescribed by Theorem~\ref{thm:main:FO} for the first-order estimators and Proposition~\ref{prop:plug-in:no-covariate} for the plug-in estimators. Specifically, we choose 
$\lambda_{\mathrm{theory}}^{\mathrm{PI}}=\frac{\Phi^{-1}(1-\delta/2p)}{\sqrt{N}}+\frac{M}{\sqrt{N}}$, $\lambda_{\mathrm{theory}}^{\mathrm{FO}}=\frac{\Phi^{-1}(1-\delta/2p)}{\sqrt{N}}+\frac{M}{N}$ and $\lambda_{\mathrm{theory}}^{\mathrm{PI}_{\mathrm{oracle}}}=\frac{\Phi^{-1}(1-\delta/2p)}{\sqrt{N}}$. For the with-covariate case, we also focus on two types of penalty weighting $\{w_k\}_{k \in [2^M - M - 1]}$: type (I) $w_{k}=\sqrt{\sum_{i=1}^N\{\frac{K_h(X_i-x)}{N}W_{k}(\hat{\alpha}(X_i),Y_i)^2}\}$ and $w_{k,1}=\sqrt{\sum_{i=1}^N\frac{K_h(X_i-x)}{N}W_{k}(\hat{\alpha}(X_i),Y_i)^2(X_{i}-x)^2}$; type (II) $w_{k}=\sqrt{\sum_{i=1}^N\frac{K_h(X_i-x)W_{k}(\hat{\alpha}(X_i),Y_i)^2}{N[1+W(\hat{\alpha}(X_i),Y_i)^\prime r_0(X_i)]^2}}$ and 
$w_{k,1}=\sqrt{\sum_{i=1}^N\frac{K_h(X_i-x)W_{k}(\hat{\alpha}(X_i),Y_i)^2(X_{i}-x)^2}{N[1+W(\hat{\alpha}(X_i),Y_i)^\prime r_0(X_i)]^2}}$, where in type (\textrm{II}), we use the true correlation coefficient $r_0(X_i)$.
PI (oracle) refers to plug-in estimator with true marginal probabilities $\alpha_0(X_i)$. Similar to the no-covariate case, $\lambda_{\mathrm{theory}}$'s are specified as
$\lambda_{\mathrm{theory}}^{PI} = \frac{\Phi^{-1}(1 - \delta / 2p(d + 1))}{\sqrt{Nh^d}} + \frac{M}{\sqrt{Nh^d}} + dMh^2$,
$\lambda_{\mathrm{theory}}^{FO} = \frac{\Phi^{-1}(1 - \delta / 2p(d + 1))}{\sqrt{Nh^d}} + \frac{M}{Nh^d} + Md^2h^4$, and
$\lambda_{\mathrm{theory}}^{PI_{\mathrm{oracle}}} = \frac{\Phi^{-1}(1 - \delta / 2p(d + 1))}{\sqrt{Nh^d}}$. For the no-covariate case, when $s=2,M=4,N=500$, the marginal probability vector is set as $(0.613,0.491,0.653,0.510)$, $r_0 = (0,0,0,0,0, 0,0,-0.339,0,0,0.249)$. For $s=5, M=4, N=500$, the marginal probability vector is set as $(0.767,0.360,0.389,0.535)$, and $r_0=(0,-0.009,0,0,0.160,-0.011,0,0,0.031,0,-0.066)$. For the with-covariate case, We sample $N$ i.i.d. $X_i\sim\mathrm{Uniform}[0,1]$. For any $j\in[M]$, set $\theta=(0.1,0.2,0.3,0.4)$ and $\alpha_{0j}(X_i) = \frac{1}{1+e^{\theta_jX_i}}$ as the $j$-th marginal probability for $\alpha_0(X_i)$. We first choose $s$ entries of $r_0(\cdot)$ as its support. 
For each $i\in[N]$, $r_{0j}(X_i)=\{(-1)^j0.03jX_i\}$ if $j$ is in the support. 
Here we provide the histogram of $1+W_i^\prime r_0$ and $1+W(\alpha_0(X_i),Y_i)^\prime r_0(X_i)$ over $1000$ replications for all numerical setups here. The values span a range not concentrated around $1$, indicating that the underlying probability distributions deviate substantially from the case of full independence, which highlights the generality of our setup. Table \ref{tab:unconditional} displays the results for $s=2,M=4,N=500$ and $s=5,M=4,N=500$ for the no-covariate cases. The detailed simulation results are included in the electronic companion. Table~\ref{tab:conditional} there reports the simulation results for $(s, M, N, x) = (2, 4, 100, 0.5)$ and $(5, 4, 100, 0.5)$ for the with-covariate cases. Each numerical result is based on 200 replications. The results indicate that the first-order estimators achieve lower RMSE than the plug-in estimator and provide more accurate probability estimates.
\begin{table}[htp!]
\centering
\begin{threeparttable}
\caption{Simulation results for the no-covariate case.}
\label{tab:unconditional}
\begin{tabular}{lcccccc}
\toprule
& \multicolumn{2}{c}{RMSE}
& \multicolumn{2}{c}{$\mathbb{E}\!\left[\max_{y\in\{0,1\}^M}\left|\hat{p}_y-p_y\right|\right]$}
& \multicolumn{2}{c}{$\mathbb{E}\!\left[\sum_{y\in\{0,1\}^M} p_y\left|\hat{p}_y-p_y\right|\right]$} \\
\cmidrule(lr){2-3}\cmidrule(lr){4-5}\cmidrule(lr){6-7}
Estimator & $\lambda_{\mathrm{cv}}$ & $\lambda_{\mathrm{theory}}$
& $\lambda_{\mathrm{cv}}$ & $\lambda_{\mathrm{theory}}$
& $\lambda_{\mathrm{cv}}$ & $\lambda_{\mathrm{theory}}$ \\
\midrule
\multicolumn{7}{l}{\textit{Panel A.} $s=2$, $M=4$, $N=500$} \\
PI ($w_{\mathrm{I}}$)              & 0.1203 & 0.3502 & 0.0138 & 0.0255 & 0.0089 & 0.0140 \\
PI ($w_{\mathrm{II}}$)             & 0.1304 & 0.3399 & 0.0171 & 0.0254 & 0.0061 & 0.0098 \\
FO ($w_{\mathrm{I}}$)              & 0.0913 & 0.1991 & 0.0136 & 0.0217 & 0.0070 & 0.0128 \\
FO ($w_{\mathrm{II}}$)             & 0.0902 & 0.2073 & 0.0141 & 0.0237 & 0.0057 & 0.0094 \\
PI (oracle $w_{\mathrm{I}}$)       & 0.0872 & 0.1079 & 0.0064 & 0.0205 & 0.0040 & 0.0045 \\
PI (oracle $w_{\mathrm{II}}$)      & 0.0858 & 0.1394 & 0.0063 & 0.0201 & 0.0037 & 0.0052 \\
\addlinespace
\multicolumn{7}{l}{\textit{Panel B.} $s=5$, $M=4$, $N=500$} \\
PI ($w_{\mathrm{I}}$)              & 0.0996 & 0.2809 & 0.0202 & 0.0320 & 0.0082 & 0.0143 \\
PI ($w_{\mathrm{II}}$)             & 0.0987 & 0.2840 & 0.0203 & 0.0292 & 0.0083 & 0.0125 \\
FO ($w_{\mathrm{I}}$)              & 0.0277 & 0.1551 & 0.0128 & 0.0271 & 0.0061 & 0.0113 \\
FO ($w_{\mathrm{II}}$)             & 0.0280 & 0.1548 & 0.0140 & 0.0247 & 0.0062 & 0.0116 \\
PI (oracle $w_{\mathrm{I}}$)       & 0.0182 & 0.0982 & 0.0102 & 0.0216 & 0.0054 & 0.0106 \\
PI (oracle $w_{\mathrm{II}}$)      & 0.0187 & 0.0995 & 0.0106 & 0.0212 & 0.0053 & 0.0102 \\
\bottomrule
\end{tabular}
\begin{tablenotes}
\footnotesize
\item[a] ``PI'' denotes the plug-in estimator and ``FO'' denotes the first-order estimator.
\item[b] $w_{\mathrm{I}}$ and $w_{\mathrm{II}}$ correspond to weighting schemes (I) and (II), respectively, for the no-covariate setting.
\item[c] All estimation errors are rounded to four decimal places.
\end{tablenotes}
\end{threeparttable}
\end{table}

\begin{table}[htp!]
\centering
\begin{threeparttable}
\caption{Simulation results for the with-covariate case.}
\label{tab:conditional}
\begin{tabular}{lcccccc}
\toprule
& \multicolumn{2}{c}{RMSE}
& \multicolumn{2}{c}{$\mathbb{E}\!\left[\max_{y\in\{0,1\}^M}\left|\hat{p}_y-p_y\right|\right]$}
& \multicolumn{2}{c}{$\mathbb{E}\!\left[\sum_{y\in\{0,1\}^M} p_y\left|\hat{p}_y-p_y\right|\right]$} \\
\cmidrule(lr){2-3}\cmidrule(lr){4-5}\cmidrule(lr){6-7}
Estimator & $\lambda_{\mathrm{cv}}$ & $\lambda_{\mathrm{theory}}$
& $\lambda_{\mathrm{cv}}$ & $\lambda_{\mathrm{theory}}$
& $\lambda_{\mathrm{cv}}$ & $\lambda_{\mathrm{theory}}$ \\
\midrule
\multicolumn{7}{l}{\textit{Panel A.} $s=2$, $M=4$, $N=100$, $x=0.5$, $h=0.1634$} \\
Plug-in ($w_{\mathrm{I}}$)         & 0.0915 & 0.9042 & 0.0498 & 0.0524 & 0.0194 & 0.0195 \\
Plug-in ($w_{\mathrm{II}}$)        & 0.0913 & 0.9058 & 0.0498 & 0.0524 & 0.0194 & 0.0195 \\
FO ($w_{\mathrm{I}}$)              & 0.0913 & 0.1681 & 0.0498 & 0.0498 & 0.0194 & 0.0194 \\
FO ($w_{\mathrm{II}}$)             & 0.0913 & 0.1682 & 0.0498 & 0.0498 & 0.0194 & 0.0194 \\
PI (oracle $w_{\mathrm{I}}$)       & 0.0912 & 0.1657 & 0.0070 & 0.0308 & 0.0056 & 0.0126 \\
PI (oracle $w_{\mathrm{II}}$)      & 0.0912 & 0.1657 & 0.0071 & 0.0308 & 0.0056 & 0.0126 \\
\addlinespace
\multicolumn{7}{l}{\textit{Panel B.} $s=5$, $M=4$, $N=100$, $x=0.5$, $h=0.1634$} \\
Plug-in ($w_{\mathrm{I}}$)         & 0.2552 & 0.2758 & 0.0894 & 0.0895 & 0.0327 & 0.0333 \\
Plug-in ($w_{\mathrm{II}}$)        & 0.2553 & 0.2749 & 0.0893 & 0.0895 & 0.0328 & 0.0332 \\
FO ($w_{\mathrm{I}}$)              & 0.2531 & 0.2539 & 0.0893 & 0.0892 & 0.0326 & 0.0332 \\
FO ($w_{\mathrm{II}}$)             & 0.2532 & 0.2538 & 0.0892 & 0.0890 & 0.0326 & 0.0332 \\
PI (oracle $w_{\mathrm{I}}$)       & 0.2432 & 0.2522 & 0.0327 & 0.0794 & 0.0134 & 0.0330 \\
PI (oracle $w_{\mathrm{II}}$)      & 0.2410 & 0.2429 & 0.0324 & 0.0773 & 0.0133 & 0.0332 \\
\bottomrule
\end{tabular}
\begin{tablenotes}
\footnotesize
\item[a] ``Plug-in'' and ``FO'' denote the plug-in and first-order estimators, respectively.
\item[b] $w_{\mathrm{I}}$ and $w_{\mathrm{II}}$ denote the choices of $w_k$ and $w_{k,1}$ under methods (I) and (II), respectively, for the with-covariate setting.
\item[c] All estimation errors are rounded to four decimal places.
\end{tablenotes}
\end{threeparttable}
\end{table}

\begin{figure}[htp!]
    \centering
    \captionsetup{font=small}
    \captionsetup[subfigure]{font=small}

    % Row 1
    \begin{subfigure}{0.3\linewidth}
        \centering
        \includegraphics[width=\linewidth]{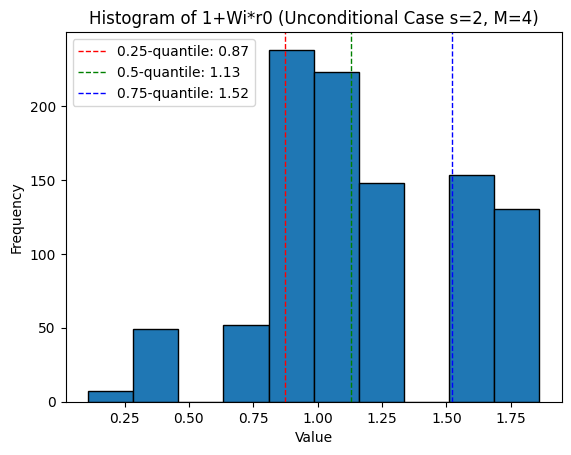}
        \caption{$M=4, s=2$ (no covariate)}
    \end{subfigure}
    \hspace{0.02\linewidth}
    \begin{subfigure}{0.3\linewidth}
        \centering
        \includegraphics[width=\linewidth]{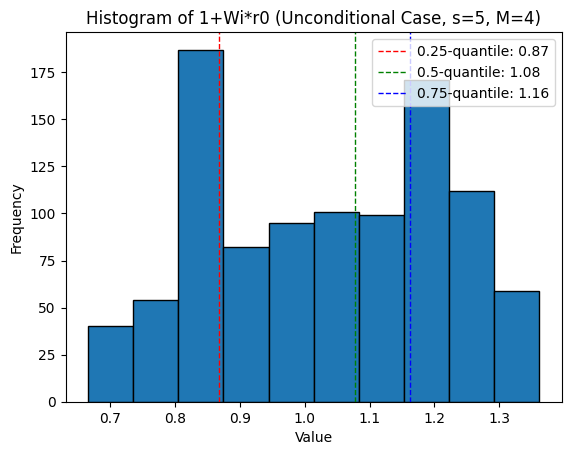}
        \caption{$M=4, s=5$ (no covariate)}
    \end{subfigure}

    \vspace{0.2cm}

    % Row 2
    \begin{subfigure}{0.3\linewidth}
        \centering
        \includegraphics[width=\linewidth]{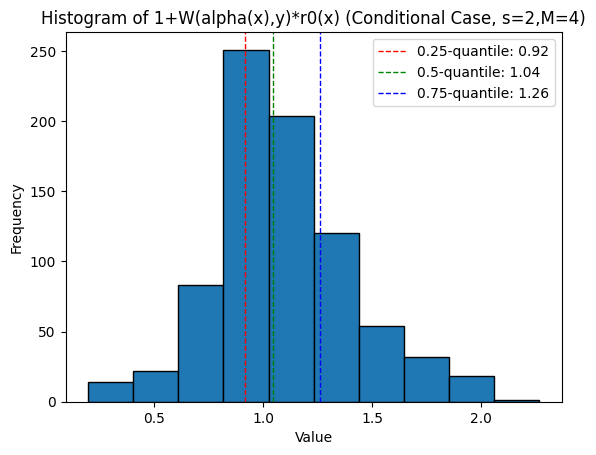}
        \caption{$M=4, s=2$ (with covariate)}
    \end{subfigure}
    \hspace{0.02\linewidth}
    \begin{subfigure}{0.3\linewidth}
        \centering
        \includegraphics[width=\linewidth]{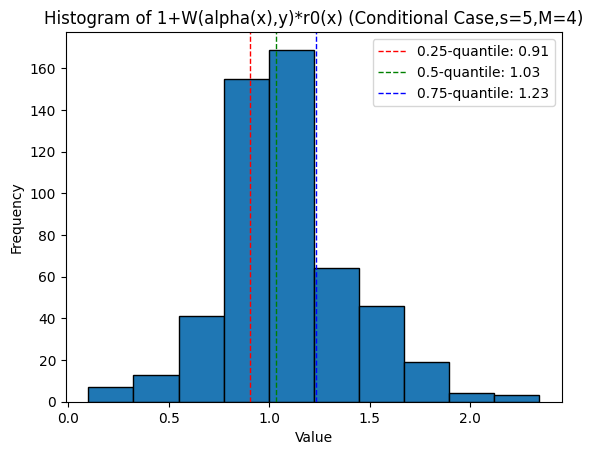}
        \caption{$M=4, s=5$ (with covariate)}
    \end{subfigure}
    \caption{Histograms of $1+W_i'r_0$ (top row, no covariates) and $1+W(\alpha_0(X_i),Y_i)'r_0(X_i)$ (bottom row, with covariates), under sparsity levels $s \in \{2,5\}$ with $M=4$.}
    \label{fig:histogram}
\end{figure}

\subsection{Numerical Results for Estimating Average Treatment Effects}\label{appendix:ATE:numerical}
We numerically illustrate average treatment effect estimation with $M=4$, yielding $16$ (vector) treatment levels (binary tuples). Setting $(0,0,0,0)$ as the (vector) control level, we estimate average treatment effects for the remaining $15$ treatment levels versus $(0,0,0,0)$. The conditional outcome models are set as $O_i=\beta_t+\mu_t X_i+\epsilon_i$, where $\epsilon_i\indep X_i$, $\epsilon_i\overset{i.i.d.}{\sim}\mathcal{N}(0,1)$, $\beta_t=0.1H(t)$, $\mu_t=0.5+0.1H(t)$, where $H(t)=\sum_{j=1}^Mt_j2^{j-1}$. We generate $X_i$ uniformly from $\{0.1,0.2,0.3,0.4,0.5,0.6,0.7,0.8,0.9\}$.
Thus the average treatment effect of level $t$ is equal to $(\beta_t-\beta_0)+(\mu_t-\mu_0)\mathbb{E}[X_i]=0.15H(t)$, and the heterogeneous treatment effect of treatment level $t$ versus control $(0,0,0,0)$ is equal to $\tau(X_i)=(\beta_t-\beta_0)+(\mu_t-\mu_0)X_i$. The propensity scores are defined through Bahadur's representation: for any $t\in\{0,1\}^M$, $e_t(x)=(1+W(\alpha_0(x),t)'r_0(x))\prod_{j=1}^M\alpha_{0j}(x)^{t_j}(1-\alpha_{0j}(x))^{1-t_j}$, where $\alpha_{0}(x)$, $r_0(x)$ are computed in the same way as the with-covariate case simulation before. So we obtain the semiparametric efficiency variance for treatment level $t$ as $V^*(t)=\mathrm{Var}[\tau(X_i)]+\mathbb{E}\left[\frac{\sigma_0(X_i)^2}{e_{0}(X_i)}\right]+\mathbb{E}\left[\frac{\sigma_t(X_i)^2}{e_t(X_i)}\right]$, where $\mathrm{Var}[\tau(X_i)]=(\mu_t-\mu_0)^2\mathrm{Var}(X_i)=0.01/15H(t)^2$,
$\sigma_0(X_i)^2=\mathrm{Var}[O_i|X_i=x,T_i=(0,0,0,0)]=1$, and $\sigma_t(x)^2=\mathrm{Var}[O_i|X_i=x,T_i=t]=1$, thus $V^*(t)=0.01/15H(t)^2+\mathbb{E}[1/e_0(X_i)]+\mathbb{E}[1/e_t(X_i)]$ for any $t\in\{0,1\}^M\setminus\{(0,0,0,0)\}$. We compare the performance of the AIPW estimator using four generalized propensity score estimators. The first is the multinomial (MNL) estimator, which fits a multinomial logit model of $T_i$ on $X_i$ and is widely used in multi-level treatment settings. The remaining three estimators are computed as $\hat{e}_t(x)=(1+W(\hat{\alpha}(x),t)'\hat{r}(x,x))\prod_{j=1}^M\hat{\alpha}_j(x)^{t_j}(1-\hat{\alpha}_j(x))^{1-t_j}$, where we employ the Nadaraya-Watson (NW estimator), plug-in ($w_{\textrm{I}}$), and first-order ($w_{\textrm{I}}$) estimators for $\hat{r}(x,x)$ to estimate the generalized correlation coefficients $r_0(x)$. The kernel bandwidth is chosen as $h \propto (\log(p)/N)^{1/5}$, ranging from $0.3$ to $0.4$ depending on the sample size $N$. Specifically, we use $\displaystyle\hat{r}_{l}^{NW}(x):= \frac{\sum_{i=1}^NK_h(X_i-x)\prod_{j\in\ell(l)}z_{j}(T_{i},\hat{\alpha}(X_i))}{\sum_{i=1}^NK_h(X_i-x)}$ as the $l$-th component of the NW estimator $\hat{r}^{NW}(x)$ to approximate \eqref{def:r0}. We checked the probability estimates before plugging them into the AIPW estimator and found that they are all positive, so this is not an issue in our simulations. In practice, if they fall outside the feasible range, one can truncate them before plugging them into the AIPW estimator. We replicate the simulations 200 times and calculate the coverage ratios of the semiparametric efficient confidence intervals for the true ATEs at each treatment level. Simulations are conducted for $s=0,2,5,10$ with total sample size $N=200,300,400$ in each case. The effective sample size for each treatment combination is equal to $N/16$ on average. Figure \ref{fig:coverage} presents the coverage ratios of true ATE for all estimators across combinations of $s$ and $N$, where the vertical axis corresponds to treatment levels and the horizontal axis to estimators. 

Several key observations can be made from the numerical results:
First, the coverage ratios of the MNL model and the NW estimator remain stable across different values of $s$ and treatment levels. However, their coverage ratios increase only slightly as $N$ grows. Second, for the plug-in and first-order estimators, the coverage ratio generally increases as $s$ decreases and $N$ grows. In contrast, such pattern is not observed for the MNL model and the NW estimator, as both do not account for the underlying independence structure of the binary treatments. Third, the NW estimator performs significantly worse than both the plug-in and first-order estimators. 
\begin{figure}
    \centering
    \includegraphics[width=\linewidth]{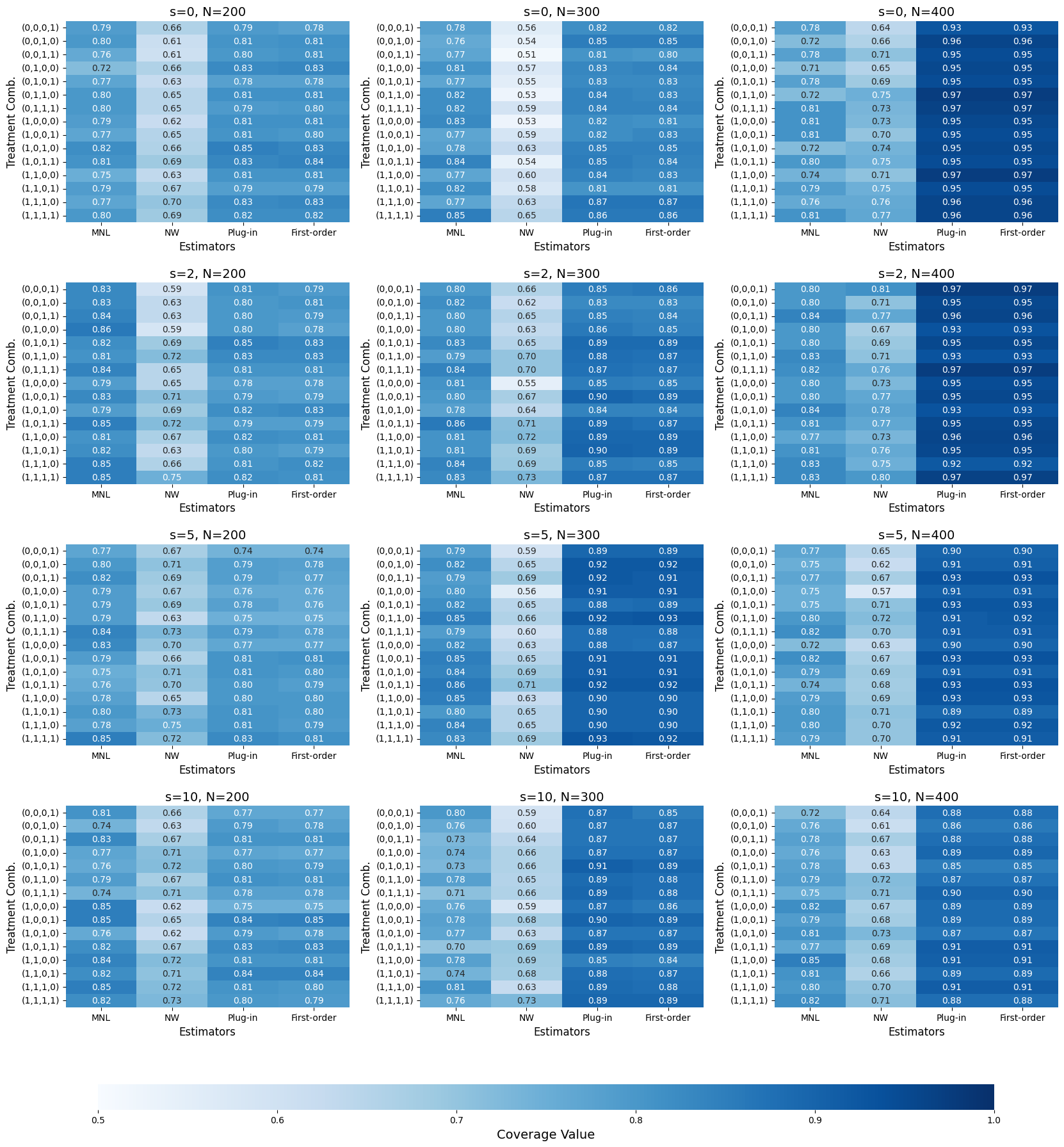}
    \caption{Coverage ratios of true ATEs under different propensity score estimators, across sparsity levels $s$ and sample sizes $N$, with $M=4$ (16 treatment combinations; $(0,0,0,0)$ as control).}
    \label{fig:coverage}
\end{figure}

\section{Discussion}\label{sec:discussion}
We develop a method for estimating assortment probabilities. Existing approaches often ignore independence structures in the joint distribution and can therefore be statistically inefficient in high dimensions. Our approach is adaptive to the independence structure by leveraging sparsity in the generalized correlation coefficients \citep{bahadur1959representation}, yielding improved finite-sample performance both theoretically and numerically. In our framework, the marginal distributions are treated as nuisance parameters, while the high-dimensional generalized correlation coefficients are the target of interest. 
Our adversarial approach also advances the study of high-dimensional estimand with low-dimensional nuisance parameters. This perspective contrasts with the de-biased machine learning literature \citep[e.g.,][]{chernozhukov2017double}, which typically studies low-dimensional targets with high-dimensional nuisance parameters instead.

We close the paper by a discussion on possible extensions. First, for the covariate-case, we establish a pointwise rate of convergence result for $r_0(x)$. So a natural extension is to establish uniform convergence results over all $x\in\mathcal{X}$. Second, it would also be interesting to incorporate prior information about independence directly into the generalized correlation coefficients and to develop localized sparsity constraints, such as sparsity levels $s_x$ that vary across covariate strata. We expect our method to remain valid provided that the covariate dimension is low. It is also interesting to incorporate high-dimensional covariates, which would require new modeling strategies and estimators. Further, our approach offers a fundamentally different way to model independence from the graphical conditional independence. \citet{yuan2021community} shows in simulations that their community detection method based on a truncated Bahadur representation outperforms approaches that assume conditional independence. This suggests that our method may also be useful in similar settings, such as community detection, to capture dependence among edges. Computationally, the adversarial set of the first-order estimator has at least $\mathrm{O}(p)$ extreme points with $p\asymp2^M$, manageable via parallelization for moderate $M$. For very large $M$, an approximate dual reformulation of the inner minimization may improve tractability, which we leave for future work. Lastly, recall from Remark~\ref{remark:non-negativity} that since $(\hat \alpha, r_0)$ may not lie in $\mathbb{K}$, we do not impose this constraint in the optimization. For the plug-in estimator, enforcing $(\hat{\alpha},r)\in\mathbb{K}$ requires adding $2^M$ linear constraints. For the first-order estimator, this requirement would be even more restrictive, since it would require $(\alpha,r)\in\mathbb{K}$ for every $\alpha\in\mathcal{A}(\hat{\alpha})$, which can be computationally prohibitive. Moreover, this introduces a tradeoff: the feasible set may become overly restrictive, and since it is possible that $(\hat \alpha, r_0)\notin\mathbb{K}$, imposing such constraints can lead to large estimation bias. The discussion in Remark~\ref{remark:non-negativity} indicates that our result is sufficient for a broad range of practical applications. For the causal inference application, $\min_{i\in[N]}1+W(\hat{\alpha}(X_i),t)'\hat{r}^{FO}(X_i,X_i)>0$ with high probability, so the AIPW estimator is generally not impacted either. In the rare cases where positivity of the probability estimates fails, a simple truncation step can be applied. There could exist alternative projection-based approaches that could enforce this constraint, but analyzing such methods would require a different type of analysis. As discussed above, the current approach is already sufficiently general for many practically relevant settings, so we leave such extensions to future work.

\bibliographystyle{abbrvnat}
\bibliography{bibliography}

@article{hirshberg2021augmented,  
  title={Augmented minimax linear estimation},
  author={Hirshberg, David A and Wager, Stefan},
  journal={The Annals of Statistics},
  volume={49},
  number={6},
  pages={3206--3227},
  year={2021},
  publisher={Institute of Mathematical Statistics}
}

@article{ranganath2018multiple,
  title={Multiple causal inference with latent confounding},
  author={Ranganath, Rajesh and Perotte, Adler},
  journal={arXiv preprint arXiv:1805.08273},
  year={2018}
}

@article{ruiz2020shopper,
  title={Shopper},
  author={Ruiz, Francisco JR and Athey, Susan and Blei, David M},
  journal={The Annals of Applied Statistics},
  volume={14},
  number={1},
  pages={1--27},
  year={2020},
  publisher={JSTOR}
}

@article{zantedeschi2017measuring,
  title={Measuring multichannel advertising response},
  author={Zantedeschi, Daniel and Feit, Eleanor McDonnell and Bradlow, Eric T},
  journal={Management Science},
  volume={63},
  number={8},
  pages={2706--2728},
  year={2017},
  publisher={Informs}
}

@article{li2019propensity,
  author  = {Li, Fan and Li, Fan},
  title   = {Propensity score weighting for causal inference with multiple treatments},
  journal = {The Annals of Applied Statistics},
  volume  = {13},
  number  = {4},
  pages   = {2389--2415},
  year    = {2019},
  doi     = {10.1214/19-AOAS1282}
}

@article{yang2016propensity,
  title={Propensity score matching and subclassification in observational studies with multi-level treatments},
  author={Yang, Shu and Imbens, Guido W and Cui, Zhanglin and Faries, Douglas E and Kadziola, Zbigniew},
  journal={Biometrics},
  volume={72},
  number={4},
  pages={1055--1065},
  year={2016},
  publisher={Wiley Online Library}
}

@article{imai2004causal,
  title={Causal inference with general treatment regimes: Generalizing the propensity score},
  author={Imai, Kosuke and Van Dyk, David A},
  journal={Journal of the American Statistical Association},
  volume={99},
  number={467},
  pages={854--866},
  year={2004},
  publisher={Taylor \& Francis}
}

@article{donnelly2021counterfactual,
  title={Counterfactual inference for consumer choice across many product categories},
  author={Donnelly, Robert and Ruiz, Francisco JR and Blei, David and Athey, Susan},
  journal={Quantitative Marketing and Economics},
  volume={19},
  number={3},
  pages={369--407},
  year={2021},
  publisher={Springer}
}

@article{plana2022independent,
  title={Independent drug action in combination therapy: implications for precision oncology},
  author={Plana, Deborah and Palmer, Adam C and Sorger, Peter K},
  journal={Cancer discovery},
  volume={12},
  number={3},
  pages={606--624},
  year={2022},
  publisher={American Association for Cancer Research}
}

@article{pomeroy2022drug,
  title={Drug independence and the curability of cancer by combination chemotherapy},
  author={Pomeroy, Amy E and Schmidt, Emmett V and Sorger, Peter K and Palmer, Adam C},
  journal={Trends in cancer},
  volume={8},
  number={11},
  pages={915--929},
  year={2022},
  publisher={Elsevier}
}

@article{danaher2008effect,
  title={The effect of competitive advertising interference on sales for packaged goods},
  author={Danaher, Peter J and Bonfrer, Andr{\'e} and Dhar, Sanjay},
  journal={Journal of Marketing Research},
  volume={45},
  number={2},
  pages={211--225},
  year={2008},
  publisher={SAGE Publications Sage CA: Los Angeles, CA}
}

@article{talluri2004revenue,
  title={Revenue management under a general discrete choice model of consumer behavior},
  author={Talluri, Kalyan and Van Ryzin, Garrett},
  journal={Management science},
  volume={50},
  number={1},
  pages={15--33},
  year={2004},
  publisher={INFORMS}
}

@article{gueorguieva2001correlated,
  title={A correlated probit model for joint modeling of clustered binary and continuous responses},
  author={Gueorguieva, Ralitza V and Agresti, Alan},
  journal={Journal of the American Statistical Association},
  volume={96},
  number={455},
  pages={1102--1112},
  year={2001},
  publisher={Taylor \& Francis}
}

@article{aouad2021display,
  title={Display optimization for vertically differentiated locations under multinomial logit preferences},
  author={Aouad, Ali and Segev, Danny},
  journal={Management Science},
  volume={67},
  number={6},
  pages={3519--3550},
  year={2021},
  publisher={INFORMS}
}

@article{einmahl2005uniform,
  title={Uniform in bandwidth consistency of kernel-type function estimators},
  author={Einmahl, Uwe and Mason, David M},
  year={2005}
}

@article{sinha2010multivariate,
  title={Multivariate logistic regression with incomplete covariate and auxiliary information},
  author={Sinha, Sanjoy K and Laird, Nan M and Fitzmaurice, Garrett M},
  journal={Journal of multivariate analysis},
  volume={101},
  number={10},
  pages={2389--2397},
  year={2010},
  publisher={Elsevier}
}

@article{yuan2021community,
  title={Community detection with dependent connectivity},
  author={Yuan, Yubai and Qu, Annie},
  journal={The Annals of Statistics},
  volume={49},
  number={4},
  pages={2378--2428},
  year={2021},
  publisher={JSTOR}
}

@article{li2025online,
  title={Online advertisement allocation under customer choices and algorithmic fairness},
  author={Li, Xiaolong and Rong, Ying and Zhang, Renyu and Zheng, Huan},
  journal={Management Science},
  volume={71},
  number={1},
  pages={825--843},
  year={2025},
  publisher={INFORMS}
}

@article{loh2012structure,
  title={Structure estimation for discrete graphical models: Generalized covariance matrices and their inverses},
  author={Loh, Po-Ling and Wainwright, Martin J},
  journal={Advances in Neural Information Processing Systems},
  volume={25},
  year={2012}
}

@article{cipra1987introduction,
  title={An introduction to the Ising model},
  author={Cipra, Barry A},
  journal={The American Mathematical Monthly},
  volume={94},
  number={10},
  pages={937--959},
  year={1987},
  publisher={Taylor \& Francis}
}

@article{newell1953theory,
  title={On the theory of the Ising model of ferromagnetism},
  author={Newell, Gordon F and Montroll, Elliott W},
  journal={Reviews of Modern Physics},
  volume={25},
  number={2},
  pages={353},
  year={1953},
  publisher={APS}
}

@article{morrison2017beyond,
  title={Beyond normality: Learning sparse probabilistic graphical models in the non-Gaussian setting},
  author={Morrison, Rebecca and Baptista, Ricardo and Marzouk, Youssef},
  journal={Advances in neural information processing systems},
  volume={30},
  year={2017}
}

@article{van2014new,
  title={A new method for constructing networks from binary data},
  author={Van Borkulo, Claudia D and Borsboom, Denny and Epskamp, Sacha and Blanken, Tessa F and Boschloo, Lynn and Schoevers, Robert A and Waldorp, Lourens J},
  journal={Scientific reports},
  volume={4},
  number={1},
  pages={5918},
  year={2014},
  publisher={Nature Publishing Group UK London}
}

@article{fan2017high,
  title={High dimensional semiparametric latent graphical model for mixed data},
  author={Fan, Jianqing and Liu, Han and Ning, Yang and Zou, Hui},
  journal={Journal of the Royal Statistical Society Series B: Statistical Methodology},
  volume={79},
  number={2},
  pages={405--421},
  year={2017},
  publisher={Oxford University Press}
}

@article{zou2006adaptive,
  title={The adaptive lasso and its oracle properties},
  author={Zou, Hui},
  journal={Journal of the American statistical association},
  volume={101},
  number={476},
  pages={1418--1429},
  year={2006},
  publisher={Taylor \& Francis}
}

@article{chernozhukov2021adversarial,
  title={Adversarial Estimation of Riesz Representers and Complexity-Rate Robustness},
  author={Chernozhukov, Victor and Newey, Whitney K and Singh, Rahul and Syrgkanis, Vasilis},
  journal={arXiv preprint arXiv:2101.00009},
  year={2021}
}

@article{borell1975brunn,
  title={The brunn-minkowski inequality in gauss space},
  author={Borell, Christer},
  journal={Inventiones mathematicae},
  volume={30},
  number={2},
  pages={207--216},
  year={1975},
  publisher={Springer-Verlag Berlin/Heidelberg}
}

@inproceedings{urschel2017learning,
  title={Learning determinantal point processes with moments and cycles},
  author={Urschel, John and Brunel, Victor-Emmanuel and Moitra, Ankur and Rigollet, Philippe},
  booktitle={International Conference on Machine Learning},
  pages={3511--3520},
  year={2017},
  organization={PMLR}
}

@book{imbens2015causal,
  title={Causal inference in statistics, social, and biomedical sciences},
  author={Imbens, Guido W and Rubin, Donald B},
  year={2015},
  publisher={Cambridge University Press}
}

@techreport{bahadur1959representation,
  title={A representation of the joint distribution of responses to n dichotomous items},
  author={Bahadur, Raj Raghu},
  year={1959},
  institution={COLUMBIA UNIV NEW YORK TEACHERS COLL}
}

@article{arkhangelsky2021double,
  title={Double-robust two-way-fixed-effects regression for panel data},
  author={Arkhangelsky, Dmitry and Imbens, Guido W and Lei, Lihua and Luo, Xiaoman},
  journal={arXiv preprint arXiv:2107.13737},
  year={2021}
}

@article{imbens2000role,
  title={The role of the propensity score in estimating dose-response functions},
  author={Imbens, Guido W},
  journal={Biometrika},
  volume={87},
  number={3},
  pages={706--710},
  year={2000},
  publisher={Oxford University Press}
}

@article{hansen2008uniform,
  title={Uniform convergence rates for kernel estimation with dependent data},
  author={Hansen, Bruce E},
  journal={Econometric Theory},
  volume={24},
  number={3},
  pages={726--748},
  year={2008},
  publisher={Cambridge University Press}
}

@article{newey1994kernel,
  title={Kernel estimation of partial means and a general variance estimator},
  author={Newey, Whitney K},
  journal={Econometric Theory},
  volume={10},
  number={2},
  pages={1--21},
  year={1994},
  publisher={Cambridge University Press}
}

@article{chernozhukov2017double,
  title={Double/debiased/neyman machine learning of treatment effects},
  author={Chernozhukov, Victor and Chetverikov, Denis and Demirer, Mert and Duflo, Esther and Hansen, Christian and Newey, Whitney},
  journal={American Economic Review},
  volume={107},
  number={5},
  pages={261--265},
  year={2017},
  publisher={American Economic Association 2014 Broadway, Suite 305, Nashville, TN 37203}
}

@article{blanchet2016markov,
  title={A markov chain approximation to choice modeling},
  author={Blanchet, Jose and Gallego, Guillermo and Goyal, Vineet},
  journal={Operations Research},
  volume={64},
  number={4},
  pages={886--905},
  year={2016},
  publisher={INFORMS}
}

@article{cox1972analysis,
  title={The analysis of multivariate binary data},
  author={Cox, David R},
  journal={Applied statistics},
  pages={113--120},
  year={1972},
  publisher={JSTOR}
}

@article{fan1993local,
  title={Local linear regression smoothers and their minimax efficiencies},
  author={Fan, Jianqing},
  journal={The annals of Statistics},
  pages={196--216},
  year={1993},
  publisher={JSTOR}
}

@inproceedings{gopalan2014bayesian,
  title={Bayesian nonparametric poisson factorization for recommendation systems},
  author={Gopalan, Prem and Ruiz, Francisco J and Ranganath, Rajesh and Blei, David},
  booktitle={Artificial Intelligence and Statistics},
  pages={275--283},
  year={2014},
  organization={PMLR}
}

@article{chernozhukov2017central,
  title={Central limit theorems and bootstrap in high dimensions},
  author={Chernozhukov, Victor and Chetverikov, Denis and Kato, Kengo},
  year={2017}, 
  journal={The Annals of Probability}
}

@article{jing2003self,
  title={Self-normalized Cram{\'e}r-type large deviations for independent random variables},
  author={Jing, Bing-Yi and Shao, Qi-Man and Wang, Qiying},
  journal={The Annals of probability},
  volume={31},
  number={4},
  pages={2167--2215},
  year={2003},
  publisher={Institute of Mathematical Statistics}
}

@book{buhlmann2011statistics,
  title={Statistics for high-dimensional data: methods, theory and applications},
  author={B{\"u}hlmann, Peter and Van De Geer, Sara},
  year={2011},
  publisher={Springer Science \& Business Media}
}

@book{fan2003nonlinear,
  title={Nonlinear time series: nonparametric and parametric methods},
  author={Fan, Jianqing and Yao, Qiwei},
  volume={20},
  year={2003},
  publisher={Springer}
}

@article{robins1994estimation,
  title={Estimation of regression coefficients when some regressors are not always observed},
  author={Robins, James M and Rotnitzky, Andrea and Zhao, Lue Ping},
  journal={Journal of the American statistical Association},
  volume={89},
  number={427},
  pages={846--866},
  year={1994},
  publisher={Taylor \& Francis}
}

@article{robins1995semiparametric,
  title={Semiparametric efficiency in multivariate regression models with missing data},
  author={Robins, James M and Rotnitzky, Andrea},
  journal={Journal of the American Statistical Association},
  volume={90},
  number={429},
  pages={122--129},
  year={1995},
  publisher={Taylor \& Francis}
}

@article{scharfstein1999adjusting,
  title={Adjusting for nonignorable drop-out using semiparametric nonresponse models},
  author={Scharfstein, Daniel O and Rotnitzky, Andrea and Robins, James M},
  journal={Journal of the American Statistical Association},
  volume={94},
  number={448},
  pages={1096--1120},
  year={1999},
  publisher={Taylor \& Francis}
}

@article{kennedy2020towards,
  title={Towards optimal doubly robust estimation of heterogeneous causal effects},
  author={Kennedy, Edward H},
  journal={arXiv preprint arXiv:2004.14497},
  year={2020}
}

@article{newey2018cross,
  title={Cross-fitting and fast remainder rates for semiparametric estimation},
  author={Newey, Whitney K and Robins, James R},
  journal={arXiv preprint arXiv:1801.09138},
  year={2018}
}

@book{fitzmaurice2008longitudinal,
  title={Longitudinal data analysis},
  author={Fitzmaurice, Garrett and Davidian, Marie and Verbeke, Geert and Molenberghs, Geert},
  year={2008},
  publisher={CRC press}
}

@article{banerjee2008model,
  title={Model selection through sparse maximum likelihood estimation for multivariate Gaussian or binary data},
  author={Banerjee, Onureena and El Ghaoui, Laurent and d'Aspremont, Alexandre},
  journal={The Journal of Machine Learning Research},
  volume={9},
  pages={485--516},
  year={2008},
  publisher={JMLR. org}
}

@article{rachkovskij2012similarity,
  title={Similarity-based retrieval with structure-sensitive sparse binary distributed representations},
  author={Rachkovskij, Dmitri A and Slipchenko, Serge V},
  journal={Computational Intelligence},
  volume={28},
  number={1},
  pages={106--129},
  year={2012},
  publisher={Wiley Online Library}
}

@article{rachkovskij2001representation,
  title={Representation and processing of structures with binary sparse distributed codes},
  author={Rachkovskij, Dmitri A.},
  journal={IEEE transactions on Knowledge and Data Engineering},
  volume={13},
  number={2},
  pages={261--276},
  year={2001},
  publisher={IEEE}
}

@article{rachkovskij2013building,
  title={Building a world model with structure-sensitive sparse binary distributed representations},
  author={Rachkovskij, Dmitri A and Kussul, Ernst M and Baidyk, Tatiana N},
  journal={Biologically Inspired Cognitive Architectures},
  volume={3},
  pages={64--86},
  year={2013},
  publisher={Elsevier}
}

@article{belloni2014high,
  title={High-dimensional methods and inference on structural and treatment effects},
  author={Belloni, Alexandre and Chernozhukov, Victor and Hansen, Christian},
  journal={Journal of Economic Perspectives},
  volume={28},
  number={2},
  pages={29--50},
  year={2014},
  publisher={American Economic Association 2014 Broadway, Suite 305, Nashville, TN 37203-2418}
}

@article{rosenbaum1983central,
  title={The central role of the propensity score in observational studies for causal effects},
  author={Rosenbaum, Paul R and Rubin, Donald B},
  journal={Biometrika},
  volume={70},
  number={1},
  pages={41--55},
  year={1983},
  publisher={Oxford University Press}
}

@article{hahn1998role,
  title={On the role of the propensity score in efficient semiparametric estimation of average treatment effects},
  author={Hahn, Jinyong},
  journal={Econometrica},
  pages={315--331},
  year={1998},
  publisher={JSTOR}
}

@article{fan1998local,
  title={Local maximum likelihood estimation and inference},
  author={Fan, Jianqing and Farmen, Mark and Gijbels, Irene},
  journal={Journal of the Royal Statistical Society Series B: Statistical Methodology},
  volume={60},
  number={3},
  pages={591--608},
  year={1998},
  publisher={Oxford University Press}
}

@article{staniswalis1989local,
  title={Local bandwidth selection for kernel estimates},
  author={Staniswalis, Joan G},
  journal={Journal of the American Statistical Association},
  volume={84},
  number={405},
  pages={284--288},
  year={1989},
  publisher={Taylor \& Francis}
}

@article{stone1977consistent,
  title={Consistent nonparametric regression},
  author={Stone, Charles J},
  journal={The annals of statistics},
  pages={595--620},
  year={1977},
  publisher={JSTOR}
}

@article{tibshirani1987local,
  title={Local likelihood estimation},
  author={Tibshirani, Robert and Hastie, Trevor},
  journal={Journal of the American Statistical Association},
  volume={82},
  number={398},
  pages={559--567},
  year={1987},
  publisher={Taylor \& Francis}
}

@article{lewbel2007local,
  title={A local generalized method of moments estimator},
  author={Lewbel, Arthur},
  journal={Economics Letters},
  volume={94},
  number={1},
  pages={124--128},
  year={2007},
  publisher={Elsevier}
}

@article{freund1991unsupervised,
  title={Unsupervised learning of distributions on binary vectors using two layer networks},
  author={Freund, Yoav and Haussler, David},
  journal={Advances in neural information processing systems},
  volume={4},
  year={1991}
}

@article{gyllenberg1997classification,
  title={Classification of binary vectors by stochastic complexity},
  author={Gyllenberg, Mats and Koski, Timo and Verlaan, Martin},
  journal={Journal of Multivariate Analysis},
  volume={63},
  number={1},
  pages={47--72},
  year={1997},
  publisher={Elsevier}
}

@article{ichimura1998maximum,
  title={Maximum likelihood estimation of a binary choice model with random coefficients of unknown distribution},
  author={Ichimura, Hidehiko and Thompson, T Scott},
  journal={Journal of Econometrics},
  volume={86},
  number={2},
  pages={269--295},
  year={1998},
  publisher={Elsevier}
}

@article{rao2004efficiency,
  title={Efficiency of generalized estimating equations for binary responses},
  author={Rao Chaganty, N and Joe, Harry},
  journal={Journal of the Royal Statistical Society Series B: Statistical Methodology},
  volume={66},
  number={4},
  pages={851--860},
  year={2004},
  publisher={Oxford University Press}
}

@article{tibshirani1996regression,
  title={Regression shrinkage and selection via the lasso},
  author={Tibshirani, Robert},
  journal={Journal of the Royal Statistical Society Series B: Statistical Methodology},
  volume={58},
  number={1},
  pages={267--288},
  year={1996},
  publisher={Oxford University Press}
}

@article{tibshirani1997lasso,
  title={The lasso method for variable selection in the Cox model},
  author={Tibshirani, Robert},
  journal={Statistics in medicine},
  volume={16},
  number={4},
  pages={385--395},
  year={1997},
  publisher={Wiley Online Library}
}

@article{bickel2009simultaneous,
  title={Simultaneous analysis of Lasso and Dantzig selector},
  author={Bickel, Peter J and Ritov, Ya’acov and Tsybakov, Alexandre B},
  year={2009}
}

@article{ruppert1994multivariate,
  title={Multivariate locally weighted least squares regression},
  author={Ruppert, David and Wand, Matthew P},
  journal={The annals of statistics},
  pages={1346--1370},
  year={1994},
  publisher={JSTOR}
}

@article{agresti2000strategies,
  title={Strategies for comparing treatments on a binary response with multi-centre data},
  author={Agresti, Alan and Hartzel, Jonathan},
  journal={Statistics in medicine},
  volume={19},
  number={8},
  pages={1115--1139},
  year={2000},
  publisher={Wiley Online Library}
}

@article{chung2003general,
  title={A general choice model for bundles with multiple-category products: Application to market segmentation and optimal pricing for bundles},
  author={Chung, Jaihak and Rao, Vithala R},
  journal={Journal of Marketing Research},
  volume={40},
  number={2},
  pages={115--130},
  year={2003},
  publisher={SAGE Publications Sage CA: Los Angeles, CA}
}

@article{gabel2022product,
  title={Product choice with large assortments: A scalable deep-learning model},
  author={Gabel, Sebastian and Timoshenko, Artem},
  journal={Management Science},
  volume={68},
  number={3},
  pages={1808--1827},
  year={2022},
  publisher={INFORMS}
}

@article{chernozhukov2015comparison,
  title={Comparison and anti-concentration bounds for maxima of Gaussian random vectors},
  author={Chernozhukov, Victor and Chetverikov, Denis and Kato, Kengo},
  journal={Probability Theory and Related Fields},
  volume={162},
  pages={47--70},
  year={2015},
  publisher={Springer}
}

@article{lu1996multivariate,
  title={Multivariate locally weighted polynomial fitting and partial derivative estimation},
  author={Lu, Zhan-Qian},
  journal={journal of multivariate analysis},
  volume={59},
  number={2},
  pages={187--205},
  year={1996},
  publisher={Elsevier}
}

@book{boucheron2003concentration,
    author = {Boucheron, Stéphane and Lugosi, Gábor and Massart, Pascal},
    title = {Concentration Inequalities: A Nonasymptotic Theory of Independence},
    publisher = {Oxford University Press},
    year = {2013},
    month = {02},
    isbn = {9780199535255},
    doi = {10.1093/acprof:oso/9780199535255.001.0001},
    url = {https://doi.org/10.1093/acprof:oso/9780199535255.001.0001},
}

@article{blanchet2019robust,
  title={Robust Wasserstein profile inference and applications to machine learning},
  author={Blanchet, Jose and Kang, Yang and Murthy, Karthyek},
  journal={Journal of Applied Probability},
  volume={56},
  number={3},
  pages={830--857},
  year={2019},
  publisher={Cambridge University Press}
}

@article{blanchet2019quantifying,
  title={Quantifying distributional model risk via optimal transport},
  author={Blanchet, Jose and Murthy, Karthyek},
  journal={Mathematics of Operations Research},
  volume={44},
  number={2},
  pages={565--600},
  year={2019},
  publisher={INFORMS}
}

@article{kennedy2017non,
  title={Non-parametric methods for doubly robust estimation of continuous treatment effects},
  author={Kennedy, Edward H and Ma, Zongming and McHugh, Matthew D and Small, Dylan S},
  journal={Journal of the Royal Statistical Society Series B: Statistical Methodology},
  volume={79},
  number={4},
  pages={1229--1245},
  year={2017},
  publisher={Oxford University Press}
}

@article{rothe2019properties,
  title={Properties of doubly robust estimators when nuisance functions are estimated nonparametrically},
  author={Rothe, Christoph and Firpo, Sergio},
  journal={Econometric Theory},
  volume={35},
  number={5},
  pages={1048--1087},
  year={2019},
  publisher={Cambridge University Press}
}

\def\spacingset#1{\renewcommand{\baselinestretch}%
{#1}\small\normalsize} \spacingset{1}
\onehalfspacing
\begin{center}
\textbf{\Large{}{}Appendix for\\ ``Adversarial Estimation of Assortment Probabilities under Independence Structure''}
{\Large{} }{\Large\par}
\par\end{center}

\begin{center}
Alexandre Belloni\footnote{Fuqua School of Business, Duke University. Email: abn5@duke.edu}, Yan Chen\footnote{Fuqua School of Business, Duke University. Email: yc555@duke.edu} and Matthew Harding\footnote{Department of Economics,
University of California, Irvine. Email: matthew.harding@uci.edu}
{\Large{} }{\Large\par}
\par\end{center}

\onehalfspacing
\section{Proof of Theorem \ref{Thm:Bahadur}}
The proof follows similarly to Proposition 1 in  \cite{bahadur1959representation}. We include a proof for completeness.  Fix $X=x$. Since $p(y\mid x)=\mathbb{P}(Y=y\mid X=x)$ has a domain with $2^M$ points and $\sum_{y\in\mathcal{Y}}p(y\mid x) = 1$, there are $2^M-1$ parameters to characterize $p(\cdot\mid x)$. Let $p_{[I\mid x]}(y\mid x) = \prod_{j=1}^M \alpha_j(x)^{y_j}(1-\alpha_j(x))^{1-y_j}$ and note that $p_{[I]}(y,x)>0$ since $0<\alpha_j(x)<1$ for all $j\in[M]$. Consider the vector space $V:=\{ h: \mathcal{Y} \to \mathbb{R} \} $, with inner-product $\langle h,g\rangle = \mathbb{E}_{p_{[I\mid x]}}[f\cdot g] = \sum_{y \in \mathcal{Y}} h(y)g(y)p_{[I]}(y\mid x)$.
Under $p_{[I\mid x]}$ it follows that the set of functions $S=\left\{1; \prod_{k=1}^{|\ell|} z_{\ell_k}(\cdot,\alpha(x)),  \ell \subseteq [M] \right\}$
is an orthonormal basis for $V$. Indeed under $p_{[I\mid x]}$ the components are independence so that unit norm holds $$\mathbb{E}_{p_{[I\mid x]}}\left[ \left(\prod_{k=1}^{|\ell|} z_{\ell_k}(\cdot ,\alpha(x))\right)^2\right]=\mathbb{E}_{p_{[I\mid x]}}\left[ \prod_{k=1}^{|\ell|} z_{\ell_k}^2(\cdot ,\alpha(x))\right]=\prod_{k=1}^{|\ell|} \mathbb{E}_{p_{[I\mid x]}}[z_{\ell_k}^2(\cdot ,\alpha(x))]=1$$ 
and the orthogonality follows from noting that for any different $\ell,\tilde\ell \subseteq M$, letting $\tilde \ell' = \ell \cap  \tilde \ell$ and $\ell' = \ell \cup  \tilde \ell \setminus \ell \cap  \tilde \ell$ we have
{$$\begin{array}{rl} 
&\displaystyle\quad \mathbb{E}_{p_{[I\mid x]}}\left[\prod_{k=1}^{|\ell|} z_{\ell_k}(\cdot ,\alpha(x)) \prod_{k=1}^{|\tilde \ell|} z_{\tilde \ell_k}(\cdot ,\alpha(x))\right] \\ 
&\displaystyle=\mathbb{E}_{p_{[I\mid x]}}\left[\left(\prod_{k=1}^{|\ell'|} z_{\ell_k}(\cdot,\alpha(x))\right) \left(\prod_{k=1}^{|\tilde \ell'|} z_{\tilde \ell_k'}^2(\cdot,\alpha(x))\right)\right]\\
&\displaystyle = \left(\prod_{k=1}^{|\ell'|} \mathbb{E}_{p_{[I\mid x]}}\left[ z_{\ell_k}( \cdot,\alpha(x))\right]\right) \prod_{k=1}^{|\tilde \ell'|}\mathbb{E}_{p_{[I\mid x]}}[  z_{\tilde \ell_k'}^2( \cdot,\alpha(x))]=0
\end{array}$$}
Therefore, since $\frac{p(\cdot\mid x)}{p_{[I\mid x]}(\cdot\mid x)} \in V$, it has a linear representation with the basis $S$, namely $\frac{p(y\mid x)}{p_{[I\mid x]}(y\mid x)} = \sum_{h \in S} \langle \frac{p(\cdot\mid x)}{p_{[I\mid x]}(\cdot\mid x)}, h(\cdot)\rangle h(y)$, where for $h(y)= \prod_{k=1}^{|\ell|}z_{\ell_k}(y,\alpha(x))$ we have
$$\begin{array}{rl} 
\displaystyle\left\langle \frac{p(\cdot\mid x)}{p_{[I\mid x]}(\cdot\mid x)}, h(\cdot)\right\rangle &\displaystyle = \sum_{y\in\mathcal{Y}} \frac{p(y\mid x)}{p_{[I\mid x]}(y\mid x)}\prod_{k=1}^{|\ell|}z_{\ell_k}(y,\alpha(x)) \cdot p_{[I\mid x]}(y\mid x) \\
&\displaystyle = \sum_{y\in\mathcal{Y}} \prod_{k=1}^{|\ell|}z_{\ell_k}(y,\alpha(x)) \cdot p(y\mid x) = r_{0\ell}(x)\\
\end{array}$$
Thus the specific representation of 
$$f(y,\alpha(x),r_0) = \frac{p(\cdot\mid x)}{p_{[I\mid x]}(\cdot\mid x)} = 1 + \sum_{k=2}^M \sum_{\ell \subset [M], |\ell|=k} r_{0\ell}(x) \prod_{m=1}^k z_{\ell_m}(y,\alpha(x))$$  
follows from $\mathbb{E}_{p_{[I\mid x]}}\left[\frac{p(\cdot\mid x)}{p_{[I\mid x]}(\cdot\mid x)}\right]=\sum_{y\in\mathcal{Y}}p(y\mid x) = 1$ and $\mathbb{E}_{p_{[I\mid x]}}\left[\frac{p(\cdot \mid x)}{p_{[I\mid x]}(\cdot\mid x)}z_j(\cdot,\alpha(x))\right]=\mathbb{E}[z_j(Y,\alpha(x))]=0$.

\section{Proofs and Additional Results of Section \ref{Sec:Rates:no-covariates}}\label{sec:proof:no-cov}
\begin{proof}[Proof of Proposition~\ref{prop:plug-in:no-covariate}]
By Lemma \ref{lemma:Score:nocovariate}, with probability $1-\epsilon-\delta_{\alpha}-\delta(1+\mathrm{o}(1))$, we have 
$$\left\|\frac{{\rm diag}^{-1}(w)}{N}\!\!\sum_{i=1}^N\!\!\frac{W(\hat\alpha,Y_i)}{1+W(\hat\alpha,Y_i)'r_0}\right\|_\infty\!\!\!\!\!\leq\left\|\frac{{\rm diag}^{-1}(w)}{N}\!\!\sum_{i=1}^N\!\!\frac{W(\alpha_0,Y_i)}{1+W(\alpha_0,Y_i)'r_0}\right\|_\infty \!\!\!\!+C(1+\beta_N)\|\hat\alpha-\alpha_0\| \frac{M^{1/2}K_{\alpha,1}}{\xi}.$$ 
Let $\mathcal{W}:={\rm diag}(w)$. By H\"older's inequality, we have 
\begin{equation}\label{ineq:no-cov:holder}
\displaystyle v'\hat Uv \geq v'Uv - \|v\|_{w,1}^2\|\mathcal{W}^{-1}(U-\hat U)\mathcal{W}^{-1}\|_{\infty,\infty}.    
\end{equation}
We apply \eqref{ineq:no-cov:holder} for 
$U:=\frac{1}{N}\sum_{i=1}^N\frac{W(\alpha_0,Y_i)W(\alpha_0,Y_i)'}{(1+W(\alpha_0,Y_i)^\prime r_0)^2},\ \hat U :=\frac{1}{N}\sum_{i=1}^N\frac{W(\hat\alpha,Y_i)W(\hat\alpha,Y_i)'}{(1+W(\hat\alpha,Y_i)^\prime r_0)^2}$. Note that with probability $1-\delta_{\alpha}$, $1+W(\hat{\alpha},Y_i)'r_0\geq_{(1)}1+W(\alpha_0,Y_i)'r_0-|(\hat{\alpha}-\alpha_0)'\nabla_{\alpha}W(\tilde{\alpha},Y_i)'r_0|\geq_{(2)}\xi-M^{1/2}K_{\alpha,1}\|\hat{\alpha}-\alpha_0\|\geq_{(3)}\frac{3}{4}\xi>0$, where (1) follows from Taylor expansion and $\tilde{\alpha}$ is on the line segment between $\alpha_0, \hat{\alpha}$. So when $\alpha_0\in\mathcal{A}(\hat{\alpha})$, by the construction of $\mathcal{A}(\hat{\alpha})$, $\tilde\alpha\in\mathcal{A}(\hat{\alpha})$. Hence following from Assumption~\ref{ass:iid-min-den} and  Assumption~\ref{ass:smoothness-assumptions}, $\min_{i\in[N]}1+W(\hat{\alpha},Y_i)'r_0\geq\frac{3}{4}\xi>0$ with probability $1-\epsilon-3\delta_{\alpha}$. Hence with probability $1-C\epsilon-C\delta_{\alpha}-C\delta(1+\mathrm{o}(1))$, we have  
$$\begin{array}{rl}
&\displaystyle\quad\frac{c_0}{N}\sum_{i=1}^N\left\{\frac{W(\alpha_0,Y_i)^\prime(\hat{r}^{PI}-r_0)}{1+W(\alpha_0,Y_i)^\prime r_0}\right\}^2\\ 
&\displaystyle\leq_{(1)}\frac{c_0}{N}\sum_{i=1}^N\left\{\frac{W(\hat\alpha,Y_i)^\prime(\hat{r}^{PI}-r_0)}{1+W(\hat\alpha,Y_i)^\prime r_0}\right\}^2+c_0\|\hat r^{PI} - r_0\|_{w,1}^2 \|\mathcal{W}^{-1}(\hat U - U)\mathcal{W}^{-1} \|_{\infty,\infty}\\
&\displaystyle \leq_{(2)}2\lambda\frac{c+1}{c-1}\norm{\hat{r}_T^{PI}-r_0}_{w,1} + c_0\left(\frac{2c}{c-1}\right)^2\|\hat r_T^{PI} - r_0\|_{w,1}^2 \|\mathcal{W}^{-1}(\hat U - U)\mathcal{W}^{-1} \|_{\infty,\infty}\\
&\displaystyle \leq_{(3)} \frac{2\lambda\sqrt{s}(c+1)/(c-1)}{\kappa_{\bar c}\sqrt{c_0}}\sqrt{\frac{c_0}{N} \sum_{i=1}^N\left\{\frac{W(\alpha_0,Y_i)^\prime(\hat{r}^{PI}-r_0)}{1+W(\alpha_0,Y_i)^\prime r_0}\right\}^2}  \\
&\displaystyle\quad\quad + \frac{4c^2s\|\mathcal{W}^{-1}(\hat U - U)\mathcal{W}^{-1} \|_{\infty,\infty}}{(c-1)^2\kappa_{\bar c}^2}\frac{c_0}{N} \sum_{i=1}^N\left\{\frac{W(\alpha_0,Y_i)^\prime(\hat{r}^{PI}-r_0)}{1+W(\alpha_0,Y_i)^\prime r_0}\right\}^2,
\end{array}$$
where (1) follows by (\ref{ineq:no-cov:holder}). The first term of (2) follows since by Lemma \ref{lemma:fast-rate:estimated-alpha}, for some universal constant $c_0>0$, with probability $1-\epsilon-\delta_{\alpha}-\delta(1+\mathrm{o}(1))$, we have $\frac{c_0}{N}\sum_{i=1}^N\!\left\{\frac{W(\hat\alpha,Y_i)^\prime(\hat{r}^{PI}-r_0)}{(1+W(\hat\alpha,Y_i)^\prime r_0)}\right\}^2\!\!\leq2\lambda\frac{c+1}{c-1}\norm{\hat{r}_T^{PI}-r_0}_{w,1}$. The second term of (2) follows since $\|\hat{r}^{PI}-r_0\|_{w,1}=\|\hat{r}_T^{PI}-r_0\|_{w,1}+\|\hat{r}_{T^c}^{PI}-r_0\|_{w,1}\leq\frac{2c}{c-1}\|\hat{r}_T^{PI}-r_0\|_{w,1}$ according to Lemma~\ref{lem:diff-restricted}. (3) follows from the definition of restricted eigenvalues and $\|\hat{r}_T^{PI}-r_0\|_{w,1}\leq\sqrt{s}\|\hat{r}_T^{PI}-r_0\|_{w,2}$. By Taylor expansion, for each $j\in[p]$, $i\in[N]$ there exists $\tilde \alpha^{ji}$ on the line segment between $\hat \alpha$ and $\alpha_0$ such that
$W_j(\hat\alpha,Y_i) = W_j(\alpha_0,Y_i) + (\hat\alpha-\alpha_0)'\nabla_\alpha W_j(\tilde \alpha^{ji},Y_i)$. Simiarly, for some $\tilde\alpha$ on the line segment between $\hat{\alpha}$ and $\alpha_0$, for any $j\in[p]$, 
$$\begin{array}{rl}
\displaystyle\frac{W_j(\hat\alpha,Y_i)}{1+W(\hat\alpha,Y_i)'r_0}&\displaystyle= \frac{W_j(\alpha_0,Y_i) +   (\hat\alpha-\alpha_0)'\nabla_\alpha W_j(\tilde \alpha^{ji},Y_i)}{1+W(\alpha_0,Y_i)'r_0 + (\hat\alpha-\alpha_0)'\nabla_\alpha W(\tilde \alpha,Y_i)'r_0} \\
&\displaystyle = \frac{W_j(\alpha_0,Y_i) }{1+W(\alpha_0,Y_i)'r_0} \left( 1 -  \frac{(\hat\alpha-\alpha_0)'\nabla_\alpha W(\tilde \alpha,Y_i)'r_0}{1+W(\alpha_0,Y_i)'r_0 + (\hat\alpha-\alpha_0)'\nabla_\alpha W(\tilde \alpha,Y_i)'r_0}\right)\\
&\displaystyle\quad + \frac{(\hat\alpha-\alpha_0)'\nabla_\alpha W_j(\tilde \alpha^{ji},Y_i)}{1+W(\alpha_0,Y_i)'r_0 + (\hat\alpha-\alpha_0)'\nabla_\alpha W(\tilde \alpha,Y_i)'r_0} = \frac{W_j(\alpha_0,Y_i) }{1+W(\alpha_0,Y_i)'r_0} + \widetilde R_j(\hat\alpha,Y_i).
\end{array}$$ 
By the construction of $\mathcal{A}(\hat{\alpha})$ as shown later in Section~\ref{sec:marginals}, $\hat{\alpha}\in\mathcal{A}(\hat{\alpha})$. Particularly, 
$$|(\hat\alpha-\alpha_0)'\nabla_\alpha W(\tilde \alpha,Y_i)'r_0|\leq_{(a)} K_{\alpha,1}\sqrt{M}\|\hat\alpha-\alpha_0\|\leq_{(b)}K_{\alpha,1}c_1\bigg(\sqrt{\frac{M}{N}}+\sqrt{\frac{\log(1/\delta)}{N}}\bigg)<_{(c)}\frac{1}{2}\xi,$$ 
where (a) holds with probability $1-\delta_{\alpha}$ from Assumption~\ref{ass:smoothness-assumptions}, (b) holds with probability $1-\delta$ according to Proposition~\ref{prop:concentration:no covariate}, (c) holds when $N$ is sufficiently large. Since $1+W(\alpha_0,Y_i)'r_0>\xi$ with probability $1-\epsilon$, $|\widetilde R_j(\hat\alpha,Y_i)|\leq \frac{CK_{\alpha,1}}{\xi}\left(1+\frac{|W_j(\alpha_0,Y_i)|}{1+W(\alpha_0,Y_i)'r_0}\right)M^{1/2}\|\hat\alpha-\alpha_0\|$ with probability $1-\delta_\alpha-\delta-\epsilon$ when $N$ is sufficiently large, where $C$ is an absolute constant.
Define $W_i := \frac{W(\alpha_0,Y_i)}{1+W(\alpha_0,Y_i)'r_0}$, $\hat W_i := \frac{W(\hat \alpha,Y_i)}{1+W(\hat \alpha,Y_i)'r_0}$. For any $l\in[p]$, define $\tilde R_{il}:=\tilde{R}_l(\hat{\alpha},Y_i)$. By Assumption \ref{ass:smoothness-assumptions}, with probability $1-\delta_\alpha$, $\left(1+\frac{|W_l(\alpha_0,Y_i)| }{1+W(\alpha_0,Y_i)'r_0}\right)\|\hat\alpha - \alpha_0\|\frac{M^{1/2}K_{\alpha,1}}{\xi} \leq\frac{1}{4}\left(1+\frac{|W_l(\alpha_0,Y_i)|}{1+W(\alpha_0,Y_i)'r_0}\right)$, and $\frac{1}{N}\sum_{i=1}^N |\tilde R_{il}\tilde R_{ik}|\leq\frac{C}{4N}\sum_{i=1}^N (1+|W_{il}|)|\tilde R_{ik}|$. Note $|\hat W_{il}\hat W_{ik} -W_{il}W_{ik}| \leq \left| W_{il}\tilde R_{ik} + \tilde R_{il} W_{ik} + \tilde R_{il}\tilde R_{ik}\right|$. Thus with probability $1-C'\delta_{\alpha}-C'\delta-C'\epsilon$, $\displaystyle\left| U_{lk} - \hat U_{lk} \right| \leq C'\frac{M^{1/2}K_{\alpha,1}}{\xi}\|\hat\alpha - \alpha_0\|\max_{l,k\in[p]}\frac{1}{N}\sum_{i=1}^N  \left(1+| W_{il}|\right)\left(1+| W_{ik}|\right)$ for some constant $C'$ independent of $s,N,M$, so the given conditions further imply that for some constant $\bar{C}$ independent of $s,N,M$, we have $s\|\mathcal{W}^{-1}(\hat U - U)\mathcal{W}^{-1} \|_{\infty,\infty}/\kappa_{\bar{c}}^2\leq \bar{C}sM^{1/2}K_{\alpha,1}\|\hat\alpha-\alpha_0\|/\{\xi\kappa_{\bar c}^2 \min_{j\in[p]} w_j^2\}\leq\delta_N\bar{C}$.
Recall that $\delta_N\rightarrow0$. So for sufficiently large $N$, $4c^2s\|\mathcal{W}^{-1}(\hat U - U)\mathcal{W}^{-1} \|_{\infty,\infty}/\{\kappa_{\bar{c}}^2(c-1)^2\}\leq1/2$. Let $x=\sqrt{\frac{c_0}{N}\sum_{i=1}^N\left\{\frac{W(\alpha_0,Y_i)^\prime(\hat{r}^{PI}-r_0)}{1+W(\alpha_0,Y_i)^\prime r_0}\right\}^2}$. Note that $\frac{x^2}{2} \leq \frac{2\lambda\sqrt{s}}{\sqrt{c_0}\kappa_{\bar{c}}}\frac{c+1}{c-1} x \Rightarrow x \leq \frac{4\lambda\sqrt{s}}{\sqrt{c_0}\kappa_{\bar{c}}}\frac{c+1}{c-1}$. This implies 
$\sqrt{\frac{c_0}{N} \sum_{i=1}^N\left\{\frac{W(\alpha_0,Y_i)^\prime(\hat{r}^{PI}-r_0)}{1+W(\alpha_0,Y_i)^\prime r_0}\right\}^2}\leq\frac{4\lambda\sqrt{s}}{\sqrt{c_0}\kappa_{\bar{c}}}\frac{c+1}{c-1}$. By Proposition \ref{prop:concentration:no covariate}, with probability $1-\delta$, $\|\hat{\alpha}-\alpha_0\|\leq c_1\big(\sqrt{\frac{M}{N}}+\sqrt{\frac{\log(1/\delta)}{N}}\big)$ for some $c_1>0$. Further note $\delta>1/2^M$. So the result follows. 
\end{proof}
\medskip
\begin{proof}[Proof of Theorem~\ref{thm:main:FO}]
Firstly, define 
$Q^{FO}(\alpha,r):= \frac{1}{N}\sum_{i=1}^N \log(1+ W^{FO}(\alpha,Y_i)'r)$, where $W^{FO}(\alpha,y):=W(\hat \alpha,y)+\nabla_\alpha W(\hat\alpha,y)(\alpha-\hat\alpha)$. $\mathcal{E}_0:= \{ \alpha_0 \in \mathcal{A}(\hat\alpha)\}$, $\mathcal{E}_1:= \{ \min_{\alpha \in \mathcal{A}(\hat\alpha)} Q^{FO}(\alpha,r_0)-Q^{FO}(\alpha_0,r_0) \geq - \widetilde{\mathcal{R}}(\hat\alpha) \}$, where $\widetilde{\mathcal{R}}(\hat\alpha)= C\bar{K}_2\sqrt{\frac{\log (M/\delta)}{N}}\max_{\alpha \in \mathcal{A}(\hat\alpha)}\|\alpha-\alpha_0\|_1+\frac{CK_{\alpha,2}}{\xi^2} \max_{\alpha \in \mathcal{A}(\hat\alpha)}\|\alpha - \alpha_0\|^2$. $\mathcal{E}_2:=\{\frac{\lambda}{c} \geq \left\|{\rm diag}^{-1}(w) \nabla_r Q^{FO}(\alpha_0,r_0)\right\|_\infty \}$, $\mathcal{E}_3:=\{\min_{i\in[N]} 1+W^{FO}(\alpha_0,Y_i)'r_0\geq\xi/2\}$. 
Assumption \ref{ass:smoothness-assumptions} implies that $\mathcal{E}_0$ holds with probability at least $1-\delta_\alpha$. $\mathcal{E}_1$ holds with probability $1-\epsilon-\delta_{\alpha}-\delta(1+\mathrm{o}(1))$ by Lemma \ref{lemma:MinMax:Claim1}. $\mathcal{E}_2$  holds with probability $1-\delta-\delta_{\alpha}$ by the conditions of Theorem~\ref{thm:main:FO} and Lemma \ref{lemma:Score:nocovariate}. Note that with probability $1-\epsilon-C\delta_{\alpha}$, 
$\min_{i\in[N]}1+W^{FO}(\alpha_0,Y_i)'r_0\geq\min_{i\in[N]}1+W(\hat{\alpha},Y_i)'r_0+(\alpha_0-\hat{\alpha})'\nabla_{\alpha}W(\hat{\alpha},Y_i)'r_0\geq_{(1)}\frac{3}{4}\xi-M^{1/2}K_{\alpha,1}\|\alpha_0-\hat{\alpha}\|\geq_{(2)}\frac{1}{2}\xi$, where inequality (1) follows from Holder's inequality and the fact that the proof of Proposition~\ref{prop:plug-in:no-covariate} shows that with probability $1-\epsilon-C'\delta_{\alpha}$, $1+W(\hat{\alpha},Y_i)'r_0\geq\frac{3}{4}\xi$, and inequality (2) follows from  Assumption~\ref{ass:smoothness-assumptions}. Let $\hat{r}=\hat{r}^{FO}$. It is then sufficient to prove that the result holds on $\mathcal{E}_0\cap\mathcal{E}_1\cap\mathcal{E}_2\cap\mathcal{E}_3$. Particularly on the intersection of all these events we have 
\begin{equation}\label{eq:main:steps}
\begin{array}{rl}
&\quad Q^{FO}(\alpha_0,\hat r) - \lambda \|\hat r\|_{w,1}\\
& \geq_{(1)} \displaystyle \min_{\alpha \in \mathcal{A}(\hat\alpha)} Q^{FO}(\alpha,\hat r) - \lambda \|\hat r\|_{w,1} \\
& \displaystyle  \geq_{(2)} \min_{\alpha \in \mathcal{A}(\hat\alpha)} Q^{FO}(\alpha,r_0) - \lambda \|r_0\|_{w,1}\\
&\displaystyle  = \min_{\alpha \in \mathcal{A}(\hat\alpha)} \{Q^{FO}(\alpha,r_0)-Q^{FO}(\alpha_0,r_0)\}\\
&\quad\displaystyle + Q^{FO}(\alpha_0,r_0)- \lambda \|r_0\|_{w,1} \\
&\displaystyle  \geq_{(3)} - \widetilde{\mathcal{R}}(\hat\alpha) + Q^{FO}(\alpha_0,r_0)- \lambda \|r_0\|_{w,1},
\end{array}
\end{equation}
where (1) holds by $\mathcal{E}_0$, (2) holds by the optimality of $\hat r$ and (3) holds by $\mathcal{E}_1$. Following similar proof steps as in Lemma~\ref{lemma:fast-rate:estimated-alpha}, $Q^{FO}(\alpha_0,\hat r)\leq Q^{FO}(\alpha_0, r_0) + \nabla_r Q^{FO}(\alpha_0, r_0)'(\hat r - r_0)  - \frac{c_0}{N} \sum_{i=1}^N\left\{\frac{W^{FO}(\alpha_0,Y_i)^\prime(\hat{r}-r_0)}{1+W^{FO}(\alpha_0,Y_i)^\prime r_0}\right\}^2$ for some universal constant $c_0>0$. Combining this and (\ref{eq:main:steps}), on event $\mathcal{E}_0\cap\mathcal{E}_1\cap\mathcal{E}_2\cap\mathcal{E}_3$, using (weighted) H\"older's inequality we have 
\begin{equation}\label{eq:thm1:main:main}\begin{array}{rl}
&\displaystyle\quad\frac{c_0}{N} \sum_{i=1}^N\left\{\frac{W^{FO}(\alpha_0,Y_i)^\prime(\hat{r}-r_0)}{1+W^{FO}(\alpha_0,Y_i)^\prime r_0}\right\}^2\\
&\displaystyle \leq   \lambda\|r_0\|_{w,1} -  \lambda\|\hat r\|_{w,1}   + \nabla_r Q^{FO}(\alpha_0,r_0)'(\hat r - r_0)\\
&\quad\displaystyle + \widetilde{\mathcal{R}}(\hat\alpha)\\ 
&\displaystyle \leq \lambda\|r_0\|_{w,1} -  \lambda\|\hat r\|_{w,1}+ \widetilde{\mathcal{R}}(\hat\alpha)\\
&\displaystyle\quad + \left\|{\rm diag}^{-1}(w) \nabla_r Q^{FO}(\alpha_0,r_0)\right\|_\infty\|\hat r - r_0\|_{w,1} \\
&\displaystyle \leq    \lambda\|r_0\|_{w,1} -  \lambda\|\hat r\|_{w,1} + \frac{\lambda}{c}\|\hat r - r_0\|_{w,1}   + \widetilde{\mathcal{R}}(\hat\alpha)\\
&\displaystyle \leq \left(1+\frac{1}{c}\right)\lambda \|\hat r_T-r_0\|_{w,1} -  \lambda \left(1-\frac{1}{c}\right)\|\hat r_{T^c}\|_{w,1}\\
&\displaystyle\quad + \widetilde{\mathcal{R}}(\hat\alpha).
\end{array}
\end{equation}
Let $\mathcal{W}:={\rm diag}(w)$. H\"older's inequality implies $v'\hat Uv \geq v'Uv - \|v\|_{w,1}^2\|\mathcal{W}^{-1}(U-\hat U)\mathcal{W}^{-1}\|_{\infty,\infty}$.
Define $\displaystyle U:=\frac{c_0}{N}\sum_{i=1}^N\frac{W(\alpha_0,Y_i)W(\alpha_0,Y_i)'}{(1+W(\alpha_0,Y_i)^\prime r_0)^2}$, $\displaystyle\hat U :=\frac{c_0}{N}\sum_{i=1}^N\frac{W^{FO}(\alpha_0,Y_i)W^{FO}(\alpha_0,Y_i)'}{(1+W^{FO}(\alpha_0,Y_i)^\prime r_0)^2}$, we have 
\begin{equation}\label{eq:DesignMatrixConnection}
\begin{array}{rl}
&\quad\displaystyle\frac{c_0}{N} \sum_{i=1}^N\left\{\frac{W^{FO}(\alpha_0,Y_i)^\prime(\hat{r}-r_0)}{1+W^{FO}(\alpha_0,Y_i)^\prime r_0}\right\}^2\\
&\quad-\displaystyle\frac{c_0}{N} \sum_{i=1}^N\left\{\frac{W(\alpha_0,Y_i)^\prime(\hat{r}-r_0)}{1+W(\alpha_0,Y_i)^\prime r_0}\right\}^2 \\
& \geq \displaystyle -\|\hat r - r_0\|_{w,1}^2 \|\mathcal{W}^{-1}(U-\hat U)\mathcal{W}^{-1}\|_{\infty,\infty} \\
\end{array}\end{equation}
Recall that $T$ is the support of $r_0$. For the rest of the proof, we show that the result holds under two exclusive cases: $\|\hat r_{T^c}\|_{w,1} \geq 2\frac{c+1}{c-1}\|\hat r_T - r_0\|_{w,1}$ and $\|\hat r_{T^c}\|_{w,1}<2\frac{c+1}{c-1}\|\hat r_T - r_0\|_{w,1}$. First assume that $\|\hat r_{T^c}\|_{w,1} \geq 2\frac{c+1}{c-1}\|\hat r_T - r_0\|_{w,1}$.
From (\ref{eq:thm1:main:main}) we have 
\begin{equation}\label{eq:nocovariate:UnRestrictedSet} 
\begin{array}{rl}
&\displaystyle\quad\frac{c_0}{N} \sum_{i=1}^N\left\{\frac{W^{FO}(\alpha_0,Y_i)^\prime(\hat{r}-r_0)}{1+W^{FO}(\alpha_0,Y_i)^\prime r_0}\right\}^2\\
&\displaystyle\leq \widetilde{\mathcal{R}}(\hat\alpha)-\frac{\lambda}{2}\left(1-\frac{1}{c}\right)\|\hat r_{T^c}\|_{w,1}.
\end{array}
\end{equation}
Thus from \eqref{eq:DesignMatrixConnection} and \eqref{eq:nocovariate:UnRestrictedSet} we have 
$$\begin{array}{rl}
&\displaystyle\quad\frac{c_0}{N} \sum_{i=1}^N\left\{\frac{W^{FO}(\alpha_0,Y_i)^\prime(\hat{r}-r_0)}{1+W^{FO}(\alpha_0,Y_i)^\prime r_0}\right\}^2  -   \frac{c_0}{N}\sum_{i=1}^N\left\{\frac{W(\alpha_0,Y_i)^\prime(\hat{r}-r_0)}{1+W(\alpha_0,Y_i)^\prime r_0}\right\}^2 \\
&\displaystyle \geq  -\|\hat r - r_0\|_{w,1}^2 \|\mathcal{W}^{-1}(U-\hat U)\mathcal{W}^{-1}\|_{\infty,\infty} \\
&\displaystyle=   -(\|\hat r_T - r_0\|_{w,1} + \| \hat r_{T^c}\|_{w,1})^2 \|\mathcal{W}^{-1}(U-\hat U)\mathcal{W}^{-1}\|_{\infty,\infty}\\
&\displaystyle \geq -\left(\frac{1}{2\frac{c+1}{c-1}} + 1\right)^2\| \hat r_{T^c}\|_{w,1}^2 \|\mathcal{W}^{-1}(U-\hat U)\mathcal{W}^{-1}\|_{\infty,\infty}\\
&\displaystyle\geq_{(a)}  - \frac{9}{4}\ \frac{4\widetilde{\mathcal{R}}^2(\hat\alpha)}{\mbox{$(1-\frac{1}{c})^2$}\lambda^2} \|\mathcal{W}^{-1}(U-\hat U)\mathcal{W}^{-1}\|_{\infty,\infty},
\end{array}$$
where (a) follows since \eqref{eq:nocovariate:UnRestrictedSet} implies that $\widetilde{\mathcal{R}}(\hat\alpha)\geq\frac{\lambda}{2}\left(1-\frac{1}{c}\right)\|\hat r_{T^c}\|_{w,1}$ when $\|\hat r_{T^c}\|_{w,1} \geq 2\frac{c+1}{c-1}\|\hat r_T - r_0\|_{w,1}$. Rearranging the terms above and using (\ref{eq:nocovariate:UnRestrictedSet}) again we have
$$\frac{c_0}{N}\sum_{i=1}^N\left\{\frac{W(\alpha_0,Y_i)^\prime(\hat{r}-r_0)}{1+W(\alpha_0,Y_i)^\prime r_0}\right\}^2 \leq  \frac{9\widetilde{\mathcal{R}}^2(\hat\alpha)}{\mbox{$(1-\frac{1}{c})^2$}\lambda^2} \|\mathcal{W}^{-1}(\hat U - U)\mathcal{W}^{-1} \|_{\infty,\infty} + \widetilde{\mathcal{R}}(\hat\alpha).$$ 
Next suppose that  $\|\hat r_{T^c}\|_{w,1} < 2\frac{c+1}{c-1}\|\hat r_T - r_0\|_{w,1}$. Then we have 
$$\begin{array}{rl}
&\quad\displaystyle\frac{c_0}{N} \sum_{i=1}^N\left\{\frac{W(\alpha_0,Y_i)^\prime(\hat{r}-r_0)}{1+W(\alpha_0,Y_i)^\prime r_0}\right\}^2\\
&\displaystyle \leq_{(1)}\lambda(1+\frac{1}{c})\|\hat r_{T} - r_0\|_{w,1}+\widetilde{\mathcal{R}}(\hat\alpha)+\|\hat r - r_0\|_{w,1}^2 \|\mathcal{W}^{-1}(\hat U - U)\mathcal{W}^{-1} \|_{\infty,\infty}\\
&\displaystyle \leq_{(2)}\frac{\lambda(1+c)\sqrt{s}}{\sqrt{c_0}c\kappa_{\bar c}}\!\!\sqrt{\frac{c_0}{N} \!\!\sum_{i=1}^N\left\{\frac{W(\alpha_0,Y_i)^\prime(\hat{r}-r_0)}{1+W(\alpha_0,Y_i)^\prime r_0}\right\}^2}  \\
&\displaystyle\quad+\frac{s\left(2\frac{c+1}{c-1}+1\right)^2\|\mathcal{W}^{-1}(\hat U - U)\mathcal{W}^{-1} \|_{\infty,\infty} }{c_0\kappa_{\bar c}^2}\frac{c_0}{N} \sum_{i=1}^N\left\{\frac{W(\alpha_0,Y_i)^\prime(\hat{r}-r_0)}{1+W(\alpha_0,Y_i)^\prime r_0}\right\}^2  + \widetilde{\mathcal{R}}(\hat\alpha),
\end{array}$$
where (1) follows by (\ref{eq:thm1:main:main}) and (\ref{eq:DesignMatrixConnection}),  and (2) follows from the definition of the restricted eigenvalue, $\|\hat{r}_{T}-r_0\|_{w,1}\leq\sqrt{s}\|\hat{r}_{T}-r_0\|_{w,2}$ and $\|\hat{r}-r_0\|_{w,1}^2=(\|\hat{r}_{T^c}\|_{w,1}+\|\hat{r}_T-r_0\|_{w,1})^2\leq\left(2\frac{c+1}{c-1}+1\right)^2\|\hat{r}_T-r_0\|_{w,1}^2$. Note that $\frac{s\left(2\frac{c+1}{c-1}+1\right)^2\|\mathcal{W}^{-1}(\hat U - U)\mathcal{W}^{-1} \|_{\infty,\infty} }{c_0\kappa_{\bar{c}}^2} \leq_{(i)} \frac{Cs\|\hat\alpha-\alpha_0\|^2 K_{\alpha,2}}{\xi\kappa_{\bar c}^2 \min_{j\in[p]} w_j^2}\leq_{(ii)}C\delta_N$, where $$C\geq\frac{\left(2\frac{c+1}{c-1}+1\right)^2}{c_0}\max_{k,l\in[p]}\frac{1}{N}\sum_{i=1}^N5\left(1+\frac{|W_k(\alpha_0,Y_i)|}{1+W(\alpha_0,Y_i)^\prime r_0}\right)\left(1+\frac{|W_l(\alpha_0,Y_i)|}{1+W(\alpha_0,Y_i)^\prime r_0}\right).$$
Here the inequality (i) holds by Lemma \ref{lemma:MatrixConnection}, and the inequality (ii) holds by the condition in Theorem \ref{thm:main:FO}. Since $\delta_N\rightarrow0$, so for $N$ sufficiently large, $\frac{s \left(2\frac{c+1}{c-1}+1\right)^2\|\mathcal{W}^{-1}(\hat U - U)\mathcal{W}^{-1} \|_{\infty,\infty} }{c_0\kappa_{\bar{c}}^2} \leq \frac{1}{2}$.
Furthermore, note that 
$\frac{x^2}{2} \leq \lambda(1+\frac{1}{c})\frac{\sqrt{s}}{\sqrt{c_0}\kappa_{\bar{c}}} x + \widetilde{\mathcal{R}}(\hat \alpha) \Rightarrow x \leq 2\lambda(1+\frac{1}{c})\frac{\sqrt{s}}{\sqrt{c_0}\kappa_{\bar{c}}} + \sqrt{2}\widetilde{\mathcal{R}}^{1/2}(\hat \alpha)$ for $x=\sqrt{\frac{c_0}{N}\sum_{i=1}^N\left\{\frac{W(\alpha_0,Y_i)'(\hat{r}-r_0)}{1+W(\alpha_0,Y_i)'r_0}\right\}^2}$.
So the result holds. 
\end{proof}
\medskip
\begin{proof}[Proof of Corollary~\ref{cor:FO}]
According to \eqref{def:A:no-covariate}, $\mathcal{A}(\hat\alpha) =\prod_{j=1}^M [\hat\alpha_j - cv_{\delta_\alpha} \hat s_j, \hat\alpha_j + cv_{\delta_\alpha} \hat s_j]$, where $\hat s_j= \sqrt{\hat\alpha_j(1-\hat\alpha_j)}$ and $cv_{\delta_\alpha} := (1-\delta_\alpha)\mbox{-quantile of} \max_{j\in[M]} \left| \frac{1}{N}\sum_{i=1}^N \xi_i \frac{(Y_{ij} - \hat \alpha_j)}{\sqrt{\hat\alpha_j(1-\hat\alpha_j)}}\right|$ conditioning on $\{Y_i\}_{i\in[N]}$. 
Denote $\hat{s}=(\hat{s}_j)_{j\in[M]}\in\mathbb{R}^M$. Then for any $t>0$, 
\begin{equation}\label{eq:key:cor:no-cov}
\begin{array}{rl}
&\quad\displaystyle\mathbb{P}\bigg(\displaystyle\max_{\alpha\in\mathcal{A}(\hat{\alpha})}\norm{\alpha -\alpha_0}_2\geq t\bigg)\\
&\leq\mathbb{P}\bigg(\norm{\hat{\alpha}-\alpha_0}_2+cv_{\delta_{\alpha}}\sqrt{\sum_{j\in[M]}\hat{s}_j^2}\geq t\bigg)\\
&\displaystyle\leq\mathbb{P}(\norm{\hat{\alpha}-\alpha_0}_2\geq t/2)+\mathbb{P}(cv_{\delta_{\alpha}}\sqrt{M/4}\geq t/2),
\end{array}
\end{equation}
where the second inequality above follows by triangular inequality and the fact that $\hat{s}_j^2\leq\frac{1}{4}$ for all $j\in[M]$. To bound $\mathbb{P}(\norm{\hat{\alpha}-\alpha_0}_2\geq t/2)$ of \eqref{eq:key:cor:no-cov}, note that by Proposition \ref{prop:concentration:no covariate}, with probability $\geq1-\delta$, $\norm{\hat{\alpha}-\alpha_0}_2\leq c_1\sqrt{\frac{M\log(1/\delta)}{N}}$, where $c_1$ is some absolute constant. To bound $\mathbb{P}(cv_{\delta_{\alpha}}\sqrt{M/4}\geq t/2)$ of \eqref{eq:key:cor:no-cov}, note that $\xi_i\overset{i.i.d.}{\sim}\mathcal{N}(0,1)$, therefore conditional on $\{Y_i\}_{i\in[N]}$, 
$\frac{1}{N}\sum_{i=1}^N\xi_i\frac{(Y_{ij}-\hat{\alpha}_j)}{\sqrt{\hat{\alpha}_j(1-\hat{\alpha}_j)}}\sim\mathcal{N}\left(0,\frac{1}{N^2}\sum_{i=1}^N\frac{(Y_{ij}-\hat{\alpha}_j)^2}{\hat{\alpha}_j(1-\hat{\alpha}_j)}\right)$ holds for all $j\in[M]$, so $\mathbb{E}\left[\max_{j\in[M]}\left|\frac{1}{N}\sum_{i=1}^N\xi_i\frac{(Y_{ij}-\hat{\alpha}_j)}{\sqrt{\hat{\alpha}_j(1-\hat{\alpha}_j)}}\right|\right]\leq\sqrt{\frac{2\sigma^2\log(2M)}{N}}$, where $\sigma^2=\max_{j\in[M]}\frac{1}{N}\sum_{i=1}^N\frac{(Y_{ij}-\hat{\alpha}_j)^2}{\hat{\alpha}_j(1-\hat{\alpha}_j)}$.
By Lemma~\ref{lemma:borell}, for some absolute constant $C_0$, with probability $1-\delta$ we have $\max_{j\in[M]}\left|\frac{1}{N}\sum_{i=1}^N\xi_i\frac{(Y_{ij}-\hat{\alpha}_j)}{\sqrt{\hat{\alpha}_j(1-\hat{\alpha}_j)}}\right|\leq\mathbb{E}\left[\max_{j\in[M]}\left|\frac{1}{N}\sum_{i=1}^N\xi_i\frac{(Y_{ij}-\hat{\alpha}_j)}{\sqrt{\hat{\alpha}_j(1-\hat{\alpha}_j)}}\right|\right]\!\!+\sigma\sqrt{\frac{2\log\left(2/\delta\right)}{N}}\leq C_0\sqrt{\frac{\log(M/\delta)}{N}}\sqrt{\max_{j\in[M]}\frac{1}{N}\sum_{i=1}^N\frac{(Y_{ij}-\hat{\alpha}_j)^2}{\hat{\alpha}_j(1-\hat{\alpha}_j)}}$. Let $\delta\leq\delta_{\alpha}$ and set $t=C\sqrt{\frac{M\log(M/\delta)}{N}}$ for some absolute constant $C$ in (\ref{eq:key:cor:no-cov}), then with probability $1-2\delta$, $\max_{\alpha\in\mathcal{A}(\hat{\alpha})}\norm{\alpha -\alpha_0}_2\leq C\sqrt{\frac{M\log(M/\delta)}{N}}$. Additionally, for any $t>0$, by triangular inequality, 
\begin{equation}\label{ineq:concentration:l1:no-cov}
\begin{array}{rl}
&\displaystyle\quad\mathbb{P}\big(\max_{\alpha\in\mathcal{A}(\hat{\alpha})}\|\alpha-\alpha_0\|_1\geq t\big)\\
&\displaystyle\leq\mathbb{P}(\|\hat{\alpha}-\alpha_0\|_1\geq t/2)+\mathbb{P}(cv_{\delta_{\alpha}}\|\hat{s}\|_1\geq t/2)\\
&\displaystyle\leq_{(a)}\mathbb{P}(\|\hat{\alpha}-\alpha_0\|_1\geq t/2)+\mathbb{P}(cv_{\delta_{\alpha}}(M/2)\geq t/2),
\end{array}
\end{equation}
where (a) follows because $\|\hat{s}\|_1\leq M/2$.
By Proposition \ref{prop:concentration:no covariate}, with probability $1-\delta$, we have $\norm{\hat\alpha-\alpha_0}_1\leq 
c_1\frac{M\sqrt{\log(1/\delta)}}{\sqrt{N}}$ for some constant $c_1>0$. Let $\delta\leq\delta_{\alpha}$ and set $t=CM\sqrt{\frac{\log (M/\delta)}{N}}$ for some absolute constant $C$, following (\ref{ineq:concentration:l1:no-cov}), with probability $1-2\delta$, we have $\max_{\alpha\in\mathcal{A}(\hat{\alpha})}\|\alpha-\alpha_0\|_1\leq CM\sqrt{\frac{\log(M/\delta)}{N}}$. Additionally, we choose $\lambda$ satisfying the condition of Theorem \ref{thm:main:FO}, so that for some absolute constant $\bar{C}$, $\lambda \leq \bar{C}N^{-1/2}\sqrt{\log(p/\delta)}$, then under the condition $\kappa_{\bar{c}}^{-1}+\bar K_2 + K_{\alpha,1} + K_{\alpha,2}/\xi^2 \leq C'$, we have $\widetilde{\mathcal{R}}(\hat\alpha) \leq \bar{C}'M\sqrt{\frac{\log(M/\delta)}{N}} \sqrt{\frac{\log(M/\delta)}{N}}+ \bar{C}'\frac{M\log(M/\delta)}{N}$ for some constant $\bar{C}'$. By Theorem \ref{thm:main:FO}, for an absolute constant $C$, $\sqrt{\frac{1}{N}\sum_{i=1}^N \frac{\{W(\alpha_0,Y_i)'(\hat r^{FO} - r_0 )\}^2}{(1+W(\alpha_0,Y_i)'r_0)^2}}\leq C\sqrt{\frac{s\log (p/\delta)}{N}}+ C\sqrt{\frac{M\log(M/\delta)}{N}}$ holds with probability $1-C\delta-C\delta_\alpha-C\epsilon$, hence the result follows since $p=2^M-M-1$. \end{proof}

\subsection{Additional Results and Technical Lemmas for Section \ref{Sec:Rates:no-covariates}}\label{appendix:no-covariate}
\begin{lemma}[Borell-TIS inequality \citep{borell1975brunn}]\label{lemma:borell} 
Let $X$ be a centered Gaussian random variable on $\mathbb{R}^n$ and $\displaystyle\sigma^2:=\max_{i\in[n]}\mathbb{E}[X_i^2]$. Then $\displaystyle\mathbb{P}\bigg(\bigg|\max_{i\in[n]}X_i-\mathbb{E}\big[\max_{i\in[n]}X_i\big]\bigg|>t\bigg)\leq 2e^{-t^2/2\sigma^2}$ holds for every $t>0$. \end{lemma}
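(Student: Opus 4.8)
The plan is to deduce this from the Gaussian concentration inequality for Lipschitz functionals, itself a consequence of Borell's Gaussian isoperimetric inequality \cite{borell1975brunn}. The first step is a linear representation of $X$: let $\Sigma\in\mathbb{R}^{n\times n}$ be the covariance matrix of $X$ and let $\Sigma^{1/2}$ be its symmetric positive semidefinite square root. If $g=(g_1,\ldots,g_n)\sim N(0,I_n)$, then $\Sigma^{1/2}g$ is a centered Gaussian vector with covariance $\Sigma^{1/2}I_n\Sigma^{1/2}=\Sigma$, so $\max_{i\in[n]}X_i$ has the same distribution as $F(g):=\max_{i\in[n]}(\Sigma^{1/2}g)_i$. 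Since the assertion concerns only the law of $\max_i X_i$, it suffices to prove the bound for $F(g)$.

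The second step is to check that $F:\mathbb{R}^n\to\mathbb{R}$ is $\sigma$-Lipschitz. For $z,z'\in\mathbb{R}^n$, using that $u\mapsto\max_i u_i$ is $1$-Lipschitz in the sup-norm and then Cauchy--Schwarz on each coordinate,
\[
|F(z)-F(z')|\le\max_{i\in[n]}\bigl|(\Sigma^{1/2}(z-z'))_i\bigr|=\max_{i\in[n]}\bigl|\langle\Sigma^{1/2}e_i,\,z-z'\rangle\bigr|\le\Bigl(\max_{i\in[n]}\|\Sigma^{1/2}e_i\|_2\Bigr)\|z-z'\|_2,
\]
where $e_i$ denotes the $i$-th standard basis vector. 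Since $\|\Sigma^{1/2}e_i\|_2^2=e_i^\top\Sigma e_i=\Sigma_{ii}=\mathbb{E}[X_i^2]$, the Lipschitz constant equals $\max_i\sqrt{\mathbb{E}[X_i^2]}=\sigma$.

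The third step invokes the Gaussian concentration inequality: if $g\sim N(0,I_n)$ and $F$ is $L$-Lipschitz, then for every $t>0$,
\[
\mathbb{P}\bigl(F(g)-\mathbb{E}F(g)\ge t\bigr)\le e^{-t^2/(2L^2)}\qquad\text{and}\qquad\mathbb{P}\bigl(F(g)-\mathbb{E}F(g)\le -t\bigr)\le e^{-t^2/(2L^2)},
\]
which follows from Borell's isoperimetric inequality applied to the level sets of $F$ \cite{borell1975brunn}. Taking $L=\sigma$, combining the two one-sided bounds by a union bound, and using the distributional identity from Step 1 yields $\mathbb{P}(|\max_i X_i-\mathbb{E}\max_i X_i|>t)\le 2e^{-t^2/2\sigma^2}$, as claimed.

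The only substantive ingredient is the Gaussian concentration inequality in Step 3; Steps 1 and 2 are routine linear algebra. If one prefers a self-contained derivation rather than citing Borell directly, the alternative is to obtain Gaussian concentration from the Gaussian logarithmic Sobolev inequality via Herbst's argument: this requires first smoothing $F$, e.g.\ replacing the maximum by the soft-max $\beta^{-1}\log\sum_{i}e^{\beta(\Sigma^{1/2}g)_i}$, which is smooth, also $\sigma$-Lipschitz, and converges to $F$ as $\beta\to\infty$, so that the gradient bound $\|\nabla F\|_2\le\sigma$ can legitimately be fed into the log-Sobolev estimate; one then passes to the limit by dominated convergence. That smoothing/limiting step, together with the usual mean-vs-median bookkeeping, is the one place where care is needed, but it does not affect the constants in the stated bound.
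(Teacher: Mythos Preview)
Your proof is correct and follows the standard route to the Borell--TIS inequality: represent $X$ as $\Sigma^{1/2}g$ with $g\sim N(0,I_n)$, verify that $g\mapsto\max_i(\Sigma^{1/2}g)_i$ is $\sigma$-Lipschitz via $\|\Sigma^{1/2}e_i\|_2^2=\Sigma_{ii}$, and then invoke Gaussian concentration for Lipschitz functionals. The paper does not supply a proof of this lemma at all; it simply records the inequality as a known technical tool and cites \cite{borell1975brunn}, so there is nothing to compare your argument against beyond noting that you have filled in what the paper leaves as a black-box citation.
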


\begin{lemma}\label{lemma:MinMax:Claim1}
Suppose Assumptions \ref{ass:iid-min-den}, \ref{ass:smoothness-assumptions} hold. Then $$\begin{array}{rl}
&\quad\displaystyle\min_{\alpha \in \mathcal{A}(\hat\alpha)} Q^{FO}(\alpha,r_0)-Q^{FO}(\alpha_0,r_0) \\
&\displaystyle\geq- C\max_{\alpha \in \mathcal{A}(\hat\alpha)}\|\alpha-\alpha_0\|_1 \left\| \frac{1}{N}\sum_{i=1}^N \frac{\nabla_\alpha W(\alpha_0,Y_i)'r_0}{1+W(\alpha_0,Y_i)'r_0} \right\|_\infty -C\frac{K_{\alpha,2}}{\xi^2}\max_{\alpha \in \mathcal{A}(\hat\alpha)} \|\alpha - \alpha_0\|^2
\end{array}$$ 
holds with probability $1-\epsilon-\delta_{\alpha}$, where $C$ is an absolute constant independent of $s,N,M$. Moreover, under Assumption \ref{ass:moment-assumptions}, $\left\| \frac{1}{N}\sum_{i=1}^N \frac{\nabla_\alpha W(\alpha_0,Y_i)'r_0}{1+W(\alpha_0,Y_i)'r_0} \right\|_\infty \leq \Phi^{-1}(1-\delta/2M) C'\bar K_2\sqrt{1/N}$ holds with probability $1-\delta((1+\mathrm{o}(1)))$ for some universal constant $C'$.
\end{lemma}
\begin{proof}[Proof of Lemma \ref{lemma:MinMax:Claim1}]
Recall $Q^{FO}(\alpha,r)= \frac{1}{N}\sum_{i=1}^N \log(1+ W^{FO}(\alpha,Y_i)'r)$, where $W^{FO}(\alpha,y)=W(\hat \alpha,y)+\nabla_\alpha W(\hat\alpha,y)(\alpha-\hat\alpha)$. Suppose all conditions of Assumption~\ref{ass:smoothness-assumptions} hold and condition (iii) of Assumption~\ref{ass:iid-min-den} holds. Then for any $\alpha\in\mathcal{A}(\hat{\alpha})$,
\begin{equation}\label{eq:Main:MinMax}
\begin{array}{rl}
&\quad Q^{FO}(\alpha,r_0)-Q^{FO}(\alpha_0,r_0) \\
&\displaystyle = \frac{1}{N}\sum_{i=1}^N \log\left( 1 + (\alpha - \alpha_0)'\frac{\nabla_\alpha W(\hat\alpha,Y_i)'r_0}{1+W^{FO}(\alpha_0,Y_i)'r_0} \right).
\end{array}
\end{equation}
By the mean value theorem, for some $\bar{\alpha}$, $\tilde \alpha$ in the line segment between $\hat\alpha$ and $\alpha_0$ we have $(\alpha-\alpha_0)'\nabla_\alpha W(\hat\alpha,Y_i)'r_0 = (\alpha-\alpha_0)'\nabla_\alpha W(\alpha_0,Y_i)'r_0 + (\alpha-\alpha_0)'\{\nabla_\alpha^2 W(\bar{\alpha},Y_i)'r_0\}(\hat\alpha-\alpha_0)$, and 
$$ \begin{array}{rl}
W^{FO}(\alpha_0,Y_i)'r_0 &= W(\hat \alpha,Y_i)'r_0 + (\alpha_0-\hat\alpha)'\nabla_\alpha W(\hat\alpha,Y_i)'r_0\\
&\displaystyle =_{(1)} W(\alpha_0,Y_i)'r_0 -\frac{1}{2}(\alpha_0-\hat\alpha)'\{\nabla_\alpha^2 W(\tilde\alpha,Y_i)'r_0\}(\alpha_0-\hat\alpha)\\
\\
\displaystyle \frac{1}{1+W^{FO}(\alpha_0,Y_i)'r_0} &\displaystyle= \frac{1}{1+W(\alpha_0,Y_i)'r_0}+\frac{W(\alpha_0,Y_i)'r_0 -W^{FO}(\alpha_0,Y_i)'r_0}{(1+W(\alpha_0,Y_i)'r_0)(1+W^{FO}(\alpha_0,Y_i)'r_0)}\\
&\displaystyle = \frac{1}{1+W(\alpha_0,Y_i)'r_0}\left(1 -\Delta R_0\right)
% R(\alpha_0,\hat\alpha,Y_i)
\end{array}$$
where (1) follows from the Taylor expansion of $W(\alpha_0,Y_i)'r_0$ around $\hat{\alpha}$, and $\Delta R_0:=-1/2[(\alpha_0-\hat\alpha)'\nabla_\alpha^2 W(\tilde\alpha,Y_i)'r_0(\alpha_0-\hat\alpha)]/\{1+W^{FO}(\alpha_0,Y_i)'r_0\}$. Thus we have
\begin{equation}\label{eq:Aux:MinMax}\begin{array}{rl}
&\quad\displaystyle\frac{(\alpha - \alpha_0)'\nabla_\alpha W(\hat\alpha,Y_i)'r_0}{1+W^{FO}(\alpha_0,Y_i)'r_0}\\
&\displaystyle = \frac{(\alpha - \alpha_0)'\nabla_\alpha W(\alpha_0,Y_i)'r_0}{1+W(\alpha_0,Y_i)'r_0}(1-\Delta R_0) \\
&\ \displaystyle +\frac{(\alpha-\alpha_0)'\{\nabla_\alpha^2 W(\bar{\alpha},Y_i)'r_0\}(\hat\alpha-\alpha_0)(1-\Delta R_0)}{1+W(\alpha_0,Y_i)'r_0}\\
&\displaystyle = \frac{(\alpha - \alpha_0)'\nabla_\alpha W(\alpha_0,Y_i)'r_0}{1+W(\alpha_0,Y_i)'r_0} +\widetilde R(\alpha,\hat\alpha,Y_i),\ \mbox{where}
\end{array}
\end{equation}
$$\begin{array}{rl}
|\widetilde R(\alpha,\hat\alpha,Y_i)| &\displaystyle\leq_{(i)}  \|\alpha-\alpha_0\| \frac{M^{1/2}K_{\alpha,1}}{\xi} |\Delta R_0| +\max_{\tilde\alpha \in \mathcal{A}(\hat\alpha)} 2\|\tilde\alpha-\alpha_0\|^2 \frac{K_{\alpha,2}}{\xi} (1+|\Delta R_0|)\\
&\displaystyle\leq_{(ii)} 3\max_{\tilde\alpha \in \mathcal{A}(\hat\alpha)} \|\tilde\alpha-\alpha_0\|^2 \frac{K_{\alpha,2}}{\xi^2}<_{(iii)}\frac{3}{16}.
\end{array}$$
Here inequality (i) above uses condition (iii) of Assumption~\ref{ass:iid-min-den} and (2), (4) of Assumption~\ref{ass:smoothness-assumptions}. Since $\hat{\alpha}\in\mathcal{A}(\hat{\alpha})$, then (2), (4), (5) of Assumption~\ref{ass:smoothness-assumptions} imply $|v'\nabla_\alpha^2 W(\tilde\alpha,Y_i)'r_0| \leq K_{\alpha,2}\|v\| $ and $ | W^{FO}(\alpha_0,Y_i)'r_0 - W(\alpha_0,Y_i)'r_0| \leq \frac{1}{16} \xi$, which then imply that $|\Delta R_0|\leq \frac{16}{15}\max_{\tilde \alpha \in \mathcal{A}(\hat \alpha)} \|\tilde\alpha-\alpha_0\|^2 \frac{K_{\alpha,2}}{\xi}$, and (5) of Assumption~\ref{ass:smoothness-assumptions} further implies inequality (ii). inequality (iii) follows from (5) of Assumption~\ref{ass:smoothness-assumptions}. Next, note that $\log(1+x)\geq \frac{x}{1+x} = x - \frac{x^2}{1+x}$ and conditions (2), (4), (5) of Assumption~\ref{ass:smoothness-assumptions} imply that $\left| \frac{(\alpha - \alpha_0)'\nabla_\alpha W(\alpha_0,Y_i)'r_0}{1+W(\alpha_0,Y_i)'r_0} +\widetilde R(\alpha,\hat\alpha,Y_i))\right| \leq 1/2$. Further by (\ref{eq:Main:MinMax}) and (\ref{eq:Aux:MinMax}), for a universal constant $C$,
$$\begin{array}{rl}
&\quad Q^{FO}(\alpha,r_0)-Q^{FO}(\alpha_0,r_0) \\
&\displaystyle \geq  \frac{1}{N}\sum_{i=1}^N \frac{(\alpha - \alpha_0)'\nabla_\alpha W(\alpha_0,Y_i)'r_0}{1+W(\alpha_0,Y_i)'r_0} +\widetilde R(\alpha,\hat\alpha,Y_i) - \!\frac{2}{N}\sum_{i=1}^N \left(\frac{(\alpha - \alpha_0)'\nabla_\alpha W(\alpha_0,Y_i)'r_0}{1+W(\alpha_0,Y_i)'r_0} +\widetilde R(\alpha,\hat\alpha,Y_i))\right)^2 \\
&\displaystyle \geq -C\max_{\alpha \in \mathcal{A}(\hat\alpha)} \|\alpha-\alpha_0\|_1 \left\|\frac{1}{N}\sum_{i=1}^N \frac{\nabla_\alpha W(\alpha_0,Y_i)'r_0}{1+W(\alpha_0,Y_i)'r_0}  \right\|_\infty - C\frac{K_{\alpha,2}}{\xi^2}\max_{\alpha \in \mathcal{A}(\hat\alpha)} \|\alpha-\alpha_0\|^2,
\end{array}$$
where the second inequality follows from H\"older's inequality, the bound derived above for $|\widetilde R(\alpha,\hat\alpha,Y_i)|$ and Assumption~\ref{ass:smoothness-assumptions}. The first statement then holds since (iii) of Assumption~\ref{ass:iid-min-den} holds with probability $1-\epsilon$ and the conditions of Assumption~\ref{ass:smoothness-assumptions} hold with probability $1-\delta_{\alpha}$. The second statement of the Lemma follows from an application of self-normalized moderate deviation theory.  Let $\tilde X_{ij} := \nabla_{\alpha_j} W(\alpha_0,Y_i)'r_0 / \{1+W(\alpha_0,Y_i)'r_0\}$. so under Assumptions \ref{ass:iid-min-den}, \ref{ass:moment-assumptions} we have $\min_{j\in[M]}\mathbb{E}[\tilde X_{ij}^2 ] \geq \bar\psi_2^2 $, $\max_{j\in[M]}\mathbb{E}[ |\tilde X_{ij}|^3 ]^{1/3} \leq \bar K_3$ and $\sqrt{\frac{\log^3p}{N}}\leq\sqrt{\frac{M^3}{N}}\leq\delta_N$, thus 
$$\begin{array}{rl}
&\displaystyle \mathbb{P} \left( \left| \frac{1}{N}\sum_{i=1}^N \tilde X_{ij} \right| > \frac{\Phi^{-1}(1-\delta/2M)}{N^{1/2}} \sqrt{\frac{1}{N}\sum_{i=1}^N \tilde X_{ij}^2}\mbox{ for some }j\in[M]\right)  \\
&\displaystyle \leq  \sum_{j\in [M]} \mathbb{P} \left( \left|\frac{1}{N}\sum_{i=1}^N\tilde X_{ij} \right| >\frac{\Phi^{-1}(1-\delta/2M)}{N^{1/2}}\sqrt{\frac{1}{N}\sum_{i=1}^N \tilde X_{ij}^2}\right)\leq \delta(1+\mathrm{o}(1)),
\end{array}$$
where the first inequality follows from the union bound, and the second inequality follows by applying Theorem 2.3 in \citet{jing2003self}. Hence the result follows.
\end{proof}

\begin{lemma}\label{lemma:Score:nocovariate}
Define $\displaystyle\beta_N := \max_{j\in[p]}\frac{1}{w_j}\frac{1}{N}\sum_{i=1}^N \frac{|W_j(\alpha_0,Y_i)|}{1+W(\alpha_0,Y_i)'r_0}$.
Suppose Assumption~\ref{ass:iid-min-den} and (i)--(iv):\\
(i) $\displaystyle\max_{\alpha \in\mathcal{A}(\hat{\alpha}), j\in[p]}|v'\nabla_\alpha W_j(\alpha,Y_i)| \leq M^{1/2}K_{\alpha,1}\|v\|$, (ii) $\max_{\tilde \alpha \in \mathcal{A}(\hat\alpha), j\in [p]} |v'\nabla_\alpha^2 W_j(\tilde\alpha,Y_i)u| \leq K_{\alpha,2}\|v\|\ \|u\| $, (iii) $\displaystyle|v'\nabla_\alpha W(\alpha_0,Y_i)'r_0| \leq M^{1/2}K_{\alpha,1}\|v\|$, $\max_{\tilde \alpha \in \mathcal{A}(\hat\alpha)} |v'\nabla_\alpha^2 W(\tilde\alpha,Y_i)'r_0v| \leq K_{\alpha,2}\|v\|^2$, \\
(iv) $\displaystyle\|\alpha-\alpha_0\|M^{1/2}K_{\alpha,1}/\xi < 1/4$,  $4\max_{\tilde\alpha \in \mathcal{A}(\hat\alpha)}\|\tilde \alpha-\alpha_0\|^2 K_{\alpha,2}/\xi \leq 1/4$, we have 
$$\begin{array}{rl}
&\displaystyle\left\|\frac{{\rm diag}^{-1}(w)}{N}\!\!\sum_{i=1}^N \frac{W(\hat\alpha,Y_i)}{1+W(\hat\alpha,Y_i)'r_0}\right\|_\infty\!\!\!\!\!\!\leq \left\|\frac{{\rm diag}^{-1}(w)}{N}\!\!\!\sum_{i=1}^N\frac{W(\alpha_0,Y_i)}{1+W(\alpha_0,Y_i)'r_0}\right\|_\infty\!\!\!\!\!\!+\!\frac{C(1+\beta_N)M^{1/2}K_{\alpha,1}\|\hat\alpha-\alpha_0\|}{\xi} \\
&\displaystyle\left\|\frac{{\rm diag}^{-1}(w)}{N}\!\!\sum_{i=1}^N\!\!\frac{W^{FO}(\alpha_0,Y_i)}{1+W^{FO}(\alpha_0,Y_i)'r_0}\right\|_\infty\!\!\!\!\!\!\leq \left\|\frac{{\rm diag}^{-1}(w)}{N}\!\!\sum_{i=1}^N \frac{W(\alpha_0,Y_i)}{1+W(\alpha_0,Y_i)'r_0}\right\|_\infty\!\!\!\!\!\!+\!\frac{C (1+\beta_N)K_{\alpha,2}\|\hat\alpha-\alpha_0\|^2}{\xi}
\end{array}$$
where $C$ is an absolute constant in the above, $w=(w_1,\ldots,w_p)$, and for each $j\in[p]$, $w_j$ is the weight for \eqref{eq:PMLE-no-covariates} and \eqref{eq:PMLE-no-covariates-bilinear}. Moreover, with probability $1-\delta(1+\mathrm{o}(1))$,  $\left\|\frac{{\rm diag}^{-1}(w)}{N}\!\!\sum_{i=1}^N \!\!\frac{W(\alpha_0,Y_i)}{1+W(\alpha_0,Y_i)'r_0}\right\|_\infty\!\!\!\!\!\!\leq\frac{\Phi^{-1}(1-\delta/2p)}{\sqrt{N}}\max_{j\in[p]}\frac{1}{w_j}\sqrt{\frac{1}{N}\sum_{i=1}^N \frac{W_j^2(\alpha_0,Y_i)}{(1+W(\alpha_0,Y_i)'r_0)^2}}$ holds under Assumptions \ref{ass:iid-min-den} and \ref{ass:moment-assumptions}.
\end{lemma}

\begin{proof}[Proof of Lemma \ref{lemma:Score:nocovariate}]
Note that by Taylor expansion, for each $j\in [p], i\in[N]$ there exist $\tilde \alpha^{ji}$, $\tilde{\alpha}$ in the line segment between $\hat \alpha$ and $\alpha_0$ such that
$W_j(\hat\alpha,Y_i) = W_j(\alpha_0,Y_i) + (\hat\alpha-\alpha_0)'\nabla_\alpha W_j(\tilde \alpha^{ji},Y_i)$. Thus 
$$\begin{array}{rl}
\displaystyle\frac{W_j(\hat\alpha,Y_i)}{1+W(\hat\alpha,Y_i)'r_0}&\displaystyle= \frac{W_j(\alpha_0,Y_i) }{1+W(\alpha_0,Y_i)'r_0} \left( 1 -  \frac{(\hat\alpha-\alpha_0)'\nabla_\alpha W(\tilde \alpha,Y_i)'r_0}{1+W(\alpha_0,Y_i)'r_0 + (\hat\alpha-\alpha_0)'\nabla_\alpha W(\tilde \alpha,Y_i)'r_0}\right)\\
&\displaystyle\quad + \frac{(\hat\alpha-\alpha_0)'\nabla_\alpha W_j(\tilde \alpha^{ji},Y_i)}{1+W(\alpha_0,Y_i)'r_0 + (\hat\alpha-\alpha_0)'\nabla_\alpha W(\tilde \alpha,Y_i)'r_0}\\
&\displaystyle = \frac{W_j(\alpha_0,Y_i) }{1+W(\alpha_0,Y_i)'r_0} + \widetilde R(\hat\alpha,Y_i),
\end{array}$$ 
where $\displaystyle|\widetilde R(\hat\alpha,Y_i)| \leq 2\left(1+\frac{|W_j(\alpha_0,Y_i)|}{1+W(\alpha_0,Y_i)'r_0}\right) \frac{\|\hat\alpha-\alpha_0\| M^{1/2}K_{\alpha,1}}{\xi}$ following from (iii) and (iv). So the first inequality holds.

Next we prove the second inequality. By Taylor expansion, for each $j\in [p], i\in[n]$ there is $\bar\alpha_{ji}$ in the line segment between $\hat \alpha$ and $\alpha_0$ such that
$W_j(\alpha_0,Y_i) = W_j(\hat{\alpha},Y_i) + (\alpha_0-\hat{\alpha})'\nabla_\alpha W_j(\hat{\alpha},Y_i) + \frac{1}{2}(\alpha_0-\hat\alpha)'\nabla_\alpha^2 W_j(\bar{\alpha}_{ji},Y_i)(\alpha_0-\hat\alpha)$. Recall that $W^{FO}(\alpha,y)=W(\hat \alpha,y)+\nabla_\alpha W(\hat\alpha,y)(\alpha-\hat\alpha)$. Thus 
$$ \begin{array}{rl}
W^{FO}_j(\alpha_0,Y_i) &= W_j(\hat \alpha,Y_i) + (\alpha_0-\hat\alpha)'\nabla_\alpha W_j(\hat\alpha,Y_i)\\
& = W_j(\alpha_0,Y_i)-\frac{1}{2}(\alpha_0-\hat\alpha)'\nabla_\alpha^2 W_j(\bar{\alpha}_{ji},Y_i)(\alpha_0-\hat\alpha)\\
\displaystyle\frac{1}{1+W^{FO}(\alpha_0,Y_i)'r_0} &\displaystyle= \frac{1}{1+W(\alpha_0,Y_i)'r_0}+\frac{W(\alpha_0,Y_i)'r_0 -W^{FO}(\alpha_0,Y_i)'r_0}{(1+W(\alpha_0,Y_i)'r_0)(1+W^{FO}(\alpha_0,Y_i)'r_0)}.\ \mbox{So}
\end{array}$$ 
\begin{equation}\label{eq:Wj:WjFO:nocovariate}  \frac{W^{FO}_j(\alpha_0,Y_i) }{1+W^{FO}(\alpha_0,Y_i)'r_0} =  \frac{W_j(\alpha_0,Y_i)}{1+W(\alpha_0,Y_i)'r_0} + \widetilde R_0.
\end{equation}
Using Assumption~\ref{ass:iid-min-den} and conditions (i) -- (iv), 
$$\begin{array}{rl}
\displaystyle|\widetilde R_0| 
& \displaystyle\leq C\left(1+\frac{|W_j(\alpha_0,Y_i)| }{1+W(\alpha_0,Y_i)'r_0}\right) \|\hat\alpha - \alpha_0\|^2 K_{\alpha,2}/\xi\ \ \mbox{for some absolute constant }C.
\end{array}$$
The third inequality follows by applying the self-normalized moderate deviation theory. Let $\tilde X_{ij} := W_j(\alpha_0,Y_i) / \{1+W(\alpha_0,Y_i)'r_0\}$ so under Assumptions \ref{ass:iid-min-den} and \ref{ass:moment-assumptions}, ${\displaystyle\min_{j\in[p]}}\mathbb{E}[\tilde X_{ij}^2 ]^{1/2}\geq \bar\psi_2$, ${\displaystyle\max_{j\in[p]}}\mathbb{E}[ |\tilde X_{ij}|^3 ]^{1/3} \leq \bar K_3$, and  $\sqrt{\frac{\log^3p}{N}}\leq\sqrt{\frac{M^3}{N}}\leq\delta_N$, so we have 
$$\begin{array}{rl}
&\displaystyle \mathbb{P} \left( \left| \frac{1}{N}\sum_{i=1}^N \frac{\tilde X_{ij}}{w_j} \right| > \frac{\Phi^{-1}(1-\delta/(2p))}{w_j N^{1/2}} \sqrt{\frac{1}{N}\sum_{i=1}^N \tilde X_{ij}^2} \mbox{ for some }j\in[p] \right)\\
&\displaystyle \leq  \sum_{j\in [p]} \mathbb{P} \left( \left|\frac{1}{N}\sum_{i=1}^N\frac{\tilde X_{ij}}{w_j} \right| > \frac{\Phi^{-1}(1-\delta/(2p))}{w_jN^{1/2}} \sqrt{\frac{1}{N}\sum_{i=1}^N \tilde X_{ij}^2 } \right)\leq \delta(1+\mathrm{o}(1)).
\end{array}$$
where the first inequality follows from the union bound, and the second inequality follows by applying Theorem 2.3 of \citet{jing2003self}.
\end{proof}

\begin{lemma}\label{lemma:MatrixConnection}
Suppose conditions (i) -- (iv) of Lemma~\ref{lemma:Score:nocovariate} hold. Let $U:= \frac{1}{N}\sum_{i=1}^N \frac{W(\alpha_0,Y_i)W(\alpha_0,Y_i)'}{(1+W(\alpha_0,Y_i)'r_0)^2}$ and 
$\hat U:= \frac{1}{N}\sum_{i=1}^N \frac{W^{FO}(\alpha_0,Y_i)  W^{FO}(\alpha_0,Y_i)'}{(1+W^{FO}(\alpha_0,Y_i)'r_0)^2}$. 
Then $\left| U_{lk} - \hat U_{lk} \right| \leq 5\|\hat\alpha - \alpha_0\|^2 \frac{K_{\alpha,2}}{\xi} \max_{l,k\in[p]}\frac{1}{N}\sum_{i=1}^N(1+|W_{il}|)(1+|W_{ik}|)$, where $W_i:=W(\alpha_0,Y_i)/(1+W(\alpha_0,Y_i)'r_0)$.
\end{lemma}
\begin{proof}[Proof of Lemma \ref{lemma:MatrixConnection}]
Note that by (\ref{eq:Wj:WjFO:nocovariate}) we have $\frac{W^{FO}(\alpha_0,Y_i) }{1+W^{FO}(\alpha_0,Y_i)'r_0} =  \frac{W(\alpha_0,Y_i)}{1+W(\alpha_0,Y_i)'r_0} + \widetilde R(\hat\alpha,\alpha_0,Y_i)$,
where $|\widetilde R_j(\hat{\alpha},\alpha_0,Y_i)| \leq  2\left(1+\frac{|W_j(\alpha_0,Y_i)| }{1+W(\alpha_0,Y_i)'r_0}\right) \|\hat\alpha - \alpha_0\|^2 K_{\alpha,2}/\xi$, for any $j\in[p]$. Define 
$\hat W_i := \frac{W^{FO}(\alpha_0,Y_i) }{1+W^{FO}(\alpha_0,Y_i)'r_0}$, $\tilde R_{il}:=\tilde R_l(\hat{\alpha},\alpha_0,Y_i)$. Thus 
$\left|\hat W_{il}\hat W_{ik} -W_{il}W_{ik}\right| \leq \left| W_{il}\tilde R_{ik} + \tilde R_{il} W_{ik} + \tilde R_{il}\tilde R_{ik}\right|$. By (iv) of Lemma~\ref{lemma:Score:nocovariate}, $\frac{1}{N}\sum_{i=1}^N|\tilde{R}_{ik}|\left(1+\frac{|W_l(\alpha_0,Y_i)|}{1+W(\alpha_0,Y_i)'r_0}\right)2\|\hat\alpha - \alpha_0\|^2 \frac{K_{\alpha,2}}{\xi} \leq \frac{1}{2N}\sum_{i=1}^N|\tilde{R}_{ik}|\big(1+\frac{|W_l(\alpha_0,Y_i)|}{1+W(\alpha_0,Y_i)'r_0}\big)$, so we have 
$$\frac{1}{N}\sum_{i=1}^N |\tilde R_{il}\tilde R_{ik}|\leq\frac{1}{2N}\sum_{i=1}^N (1+|W_{il}|)|\tilde R_{ik}|.$$ 
So the upper bound for $\left| U_{lk} - \hat U_{lk} \right|$ holds by the upper bound of $\tilde{R}_{il}$ for any $l\in[p]$.
\end{proof}

\begin{lemma}\label{lem:diff-restricted}
If {$\displaystyle\norm{{\rm diag}^{-1}(w)\frac{1}{N}\sum_{i=1}^N\frac{W(\hat\alpha,Y_i)}{1+W(\hat\alpha,Y_i)^\prime r_0}}_\infty\leq\frac{\lambda}{c}$} for some constant $c>1$, then for $\nu=\hat r^{PI}-r_0$, $T$ as the support of $r_0$, we have $\displaystyle\norm{\nu_{T^c}}_{w,1}\leq\frac{c+1}{c-1}\norm{\nu_T}_{w,1}$. 
\end{lemma}
\begin{proof}[Proof of Lemma \ref{lem:diff-restricted}]
For a given $\lambda$, $\frac{1}{N}\sum_{i=1}^N\left[\log\left(1+W(\hat\alpha,Y_i)^\prime\hat r^{PI}\right)-\log\left(1+W\left(\hat\alpha,Y_i\right)^\prime r_0\right)\right]\geq\lambda\left(\norm{\hat r^{PI}}_{w,1}-\norm{r_0}_{w,1}\right)$.
Let $\hat Q(\alpha,r)=\frac{1}{N}\sum_{i=1}^N\log\left(1+W\left(\alpha,Y_i\right)^\prime r\right)$, we can write the above inequality as $\hat Q(\hat\alpha,\hat r^{PI})-\hat Q(\hat\alpha,r_0)\geq\lambda\left(\norm{\hat r^{PI}}_{w,1}-\norm{r_0}_{w,1}\right)$,
so using concavity of $\hat Q$ in parameter $r$, we have $\lambda\left(\norm{r_0}_{w,1}-\norm{\hat r^{PI}}_{w,1}\right)\geq\hat Q\left(\hat\alpha,r_0\right)-\hat Q\left(\hat\alpha,\hat r^{PI}\right)\geq\nabla_r\hat Q(\hat\alpha,r_0)^T(r_0-\hat r^{PI})\geq_{(a)}-\frac{\lambda}{c}\norm{\nu}_{w,1}$,
where (a) follows from H\"older's inequality. Note that $\norm{\hat r^{PI}}_{w,1}=\norm{r_0+\nu_T}_{w,1}+\norm{\nu_{T^c}}_{w,1}\geq\norm{r_0}_{w,1}-\norm{\nu_T}_{w,1}+\norm{\nu_{T^c}}_{w,1}$, so $\lambda(\norm{r_0}_{w,1}-\norm{\hat r^{PI}}_{w,1})\leq\lambda\left(\norm{\nu_T}_{w,1}-\norm{\nu_{T^c}}_{w,1}\right)$. Thus $\lambda\left(\norm{\nu_T}_{w,1}-\norm{\nu_{T^c}}_{w,1}\right)\geq-\frac{\lambda}{c}\norm{\nu}_{w,1}=-\frac{\lambda}{c}\left(\norm{\nu_T}_{w,1}+\norm{\nu_{T^c}}_{w,1}\right)$,
implying $\frac{c+1}{c-1}\norm{\nu_T}_{w,1}\geq\norm{\nu_{T^c}}_{w,1}$.
\end{proof}

\begin{lemma}\label{lemma:fast-rate:estimated-alpha}
    Suppose Assumptions \ref{ass:iid-min-den}  and \ref{ass:moment-assumptions} hold. Suppose $\norm{{\rm diag}^{-1}(w)\frac{1}{N}\sum_{i=1}^N\frac{W(\hat\alpha,Y_i)}{1+W(\hat\alpha,Y_i)^\prime r_0}}_\infty\leq\frac{\lambda}{c}$ for some $c>1$ and $\min_{i\in[N]}1+W(\hat\alpha,Y_i)^\prime r_0\geq \underline{c}\xi$ for an absolute constant $\underline{c}$. Then we have $$\frac{c_0}{N}\sum_{i=1}^N\left\{\frac{W(\hat\alpha,Y_i)^\prime(\hat{r}^{PI}-r_0)}{(1+W(\hat\alpha,Y_i)^\prime r_0)}\right\}^2\leq2\lambda\frac{c+1}{c-1}\norm{\hat{r}_T^{PI}-r_0}_{w,1},$$ 
    where $c_0>0$ is a universal constant.
\end{lemma}
\begin{proof}[Proof of Lemma \ref{lemma:fast-rate:estimated-alpha}]
Let $T=\mathrm{supp}(r_0)$ and $f(Y_i,\alpha,r)=1+W(\alpha,Y_i)'r$. 
By optimality of $\hat{r}^{PI}$, we have $\frac{1}{N}\sum_{i=1}^N\log f(Y_i,\hat\alpha,
\hat{r}^{PI})-\lambda\norm{\hat{r}^{PI}}_{w,1}\geq\frac{1}{N}\sum_{i=1}^N\log f(Y_i,\hat\alpha,r_0)-\lambda\norm{r_0}_{w,1}$. So $$\lambda\left(\norm{\hat r^{PI}}_{w,1}-\norm{r_0}_{w,1}\right)\leq\frac{1}{N}\sum_{i=1}^N\log f(Y_i,\hat\alpha,\hat r^{PI})-\log f(Y_i,\hat\alpha,r_0).$$ 
Moreover, $\displaystyle\log f(Y_i,\hat\alpha,\hat{r}^{PI})-\log f(Y_i,\hat\alpha, r_0)=\log\left(1+\frac{W(\hat\alpha,Y_i)^\prime(\hat{r}^{PI}-r_0)}{1+W(\hat\alpha,Y_i)^\prime r_0}\right)$.
Additionally, note that given any $\bar{y}>-1$, let $c'=\frac{1}{2(1+\max\{\bar{y},0\})}$, then $\log(1+y)\leq y-c'y^2$ holds for $y\in(-1,\bar{y}]$. Hence we have $\log f(Y_i,\hat\alpha,\hat{r}^{PI})-\log f(Y_i,\hat\alpha, r_0)\leq\frac{W(\hat\alpha,Y_i)^\prime(\hat{r}^{PI}-r_0)}{1+W(\hat\alpha,Y_i)^\prime r_0}-c_0\left\{\frac{W(\hat\alpha,Y_i)^\prime(\hat{r}^{PI}-r_0)}{1+W(\hat\alpha,Y_i)^\prime r_0}\right\}^2$, where $c_0>0$ is a universal constant.
Thus $\displaystyle\frac{c_0}{N}\sum_{i=1}^N\left\{\frac{W(\hat\alpha,Y_i)^\prime(\hat{r}^{PI}-r_0)}{1+W(\hat\alpha,Y_i)^\prime r_0}\right\}^2\leq\frac{1}{N}\sum_{i=1}^N\frac{W(\hat\alpha,Y_i)^\prime(\hat{r}^{PI}-r_0)}{1+W(\hat\alpha,Y_i)^\prime r_0}+\lambda\left(\norm{r_0}_{w,1}-\norm{\hat r^{PI}}_{w,1}\right)$. Since $\norm{{\rm diag}^{-1}(w) \frac{1}{N}\sum_{i=1}^N\frac{W(\hat\alpha,Y_i)}{1+W(\hat\alpha,Y_i)^\prime r_0}}_\infty\leq\frac{\lambda}{c}$, by H\"older's inequality, we have 
$$\begin{array}{rl}
&\quad\displaystyle\frac{c_0}{N}\sum_{i=1}^N\frac{\left[W(\hat\alpha,Y_i)^\prime(\hat{r}^{PI}-r_0)\right]^2}{(1+W(\hat\alpha,Y_i)^\prime r_0)^2}\\
&\displaystyle\leq\norm{\frac{{\rm diag}^{-1}(w)}{N}\sum_{i=1}^N\frac{W(\hat\alpha,Y_i)}{1+W(\hat\alpha,Y_i)^\prime r_0}}_\infty\norm{\hat{r}^{PI}-r_0}_{w,1}+\lambda\norm{r_0}_{w,1}-\lambda\norm{\hat{r}^{PI}}_{w,1}\\
&\displaystyle\leq\lambda(1+1/c)\norm{\hat{r}^{PI}-r_0}_{w,1}\leq\lambda(1+1/c)(\norm{\hat{r}_T^{PI}-r_0}_{w,1}+\norm{\hat{r}_{T^c}^{PI}}_{w,1})\\
&\displaystyle\leq_{(a)}\lambda(1+1/c)\norm{\hat{r}_T^{PI}-r_0}_{w,1}\left(1+\frac{c+1}{c-1}\right)=2\lambda\frac{c+1}{c-1}\norm{\hat{r}_T^{PI}-r_0}_{w,1},
\end{array}$$ 
where inequality (a) follows from Lemma~\ref{lem:diff-restricted}. Hence the result follows.
\end{proof}
\medskip
\begin{example}\label{ex:plug-in}
Let $M^6=\mathrm{o}(N)$, $\alpha_{0j}=\frac{1}{2}, j\in[M]$. Here we use $r_{0\ell}$ to denote the coordinate of $r_0$ corresponding to assortment $\ell$. For a fixed constant $a>0$, let $r_{0\ell}= a\,M/\sqrt N$ when $|\ell|=2$, and $r_{0\ell}= 0$ when $|\ell|\ge 3$. So $s=\binom{M}{2}$. So $\mathbb P(Y=y)
=2^{-M}[1+\sum_{1\le j<k\le M}r_{0,jk}(2y_j-1)(2y_k-1)],\forall y\in\{0,1\}^M$. Particularly, the probability is well defined since $1+W(\alpha_0,Y_i)'r_0=(1+a\frac{M}{\sqrt{N}}\sum_{j<k}(2Y_{ij}-1)(2Y_{ik}-1))=1+\mathrm{o}(1)$. Also $\sup_y|W(\alpha_0,y)'r_0|\le\frac{aM}{\sqrt{N}}\frac{M^2}{2}=\frac{aM^3}{2\sqrt N}=\mathrm{o}(1)$. Proposition~\ref{prop:concentration:no covariate} implies that $\|\hat{\alpha}-\alpha_0\|\asymp\sqrt{M/N}$. Proposition~\ref{prop:plug-in:no-covariate} requires $\lambda=\bar{C}M/\sqrt{N}$ for some absolute constant $\bar{C}$. By smoothness conditions, $\bigl\|\mathbb E[W(\hat\alpha,Y)]-\mathbb E[W(\alpha_0,Y)]\bigr\|_\infty\lesssim \sqrt{\frac{M}{N}}$. Additionally, $\|\mathbb E[W(\alpha_0,Y)]\|_\infty=\max_{\ell}|r_{0\ell}|=
a\,\frac{M}{\sqrt N}$. Hence $\|\mathbb E[W(\hat\alpha,Y)]\|_\infty
\le
a\,\frac{M}{\sqrt N}+C\sqrt{\frac{M}{N}}
\le \frac{\lambda}{4}$ for $\bar{C}$ sufficiently large. Let $\hat Q(r)=\frac1N\sum_{i=1}^N \log\bigl(1+W(\hat\alpha,Y_i)'r\bigr)-\lambda\|r\|_1$. Since the coordinates of $W(\alpha_0,Y_i)$ are uniformly bounded,
Hoeffding's inequality and a union bound over $p\le 2^M$ coordinates yield $\left\|
\frac{1}{N}\sum_{i=1}^N W(\alpha_0,Y_i)-\mathbb{E}[W(\alpha_0,Y_i)]\right\|_\infty=\mathrm{O}_p\!(\sqrt{M/N})=\mathrm{o}_p\!(M/\sqrt{N})$. Then the smoothness conditions imposed imply $\left\|
\frac{1}{N}\sum_{i=1}^N W(\hat\alpha,Y_i)-\mathbb{E}[W(\hat\alpha,Y)]\right\|_\infty=\mathrm{O}_p\!(\sqrt{M/N})=\mathrm{o}_p\!(M/\sqrt{N})$. Therefore, with probability tending to one, $\left\|\frac{1}{N}\sum_{i=1}^N W(\hat\alpha,Y_i)\right\|_\infty\le\frac{\lambda}{2}$. On this event, for any feasible $r\neq 0$, $\hat Q(r)-\hat Q(0)\le\left(\frac1N\sum_{i=1}^N W(\hat\alpha,Y_i)\right)'r-\lambda\|r\|_1\le
\left\|\frac{1}{N}\sum_{i=1}^N W(\hat\alpha,Y_i)
\right\|_\infty \|r\|_1-\lambda\|r\|_1\le-\frac{\lambda}{2}\|r\|_1<0$. Hence $\hat r^{PI}=0$ with probability tending to one. Further, on this event, $\sqrt{\frac{1}{N}\sum_{i=1}^N\frac{\{W(\alpha_0,Y_i)'(\hat{r}^{PI}-r_0)\}^2}{(1+W(\alpha_0,Y_i)'r_0)^2}}=\sqrt{\frac{1}{N}\sum_{i=1}^N\frac{\{W(\alpha_0,Y_i)'r_0\}^2}{(1+W(\alpha_0,Y_i)'r_0)^2}}$. Note that $r_0$'s support corresponds to all the assortments $|\ell|=2$. Let $\Sigma=\mathbb{E}\left[\widetilde{W}_i\widetilde{W}_i'\right]\in\mathbb{R}^{s\times s}$, where $\widetilde{W}_i$ denotes the subvector of $W(\alpha_0,Y_i)$ restricted to coordinates corresponding to assortments with with $|\ell|=2$ only, and $s={M\choose 2}$. Index the $s$ assortments with $|\ell|=2$ by edges $e=(j,k)$ of the complete graph $K_M$, where $j,k\in[M]$ and $j\neq k$. Then $\Sigma=\mathbf{I}_s+\frac{aM}{\sqrt{N}}A$. Here $\mathbf{I}_s$ is the $s\times s$ identity matrix, and $A$ is the adjacency matrix for the line graph of $K_M$, where $A_{e,e'}=1$ iff the two edges $e,e'$ share a vertex. For $M$ large, $\lambda_{\min}(\Sigma)$(i.e. the minimum eigenvalue of $\Sigma$) satisfies $\lambda_{\min}(\Sigma)=1-\frac{2aM}{\sqrt{N}}\geq1/2$ since $M^6=\mathrm{o}(N)$. By applying a standard matrix Chernoff lower-tail inequality, with high probability we get $\lambda_{\min}\left(\frac{1}{N}\sum_{i=1}^N\tilde{W}_i\tilde{W}_i'\right)\geq c_0$ for some absolute constant $c_0>0$. Note $1+W(\alpha_0,Y_i)'r_0=1+\frac{aM}{\sqrt{N}}\sum_{j<k}(2Y_j-1)(2Y_k-1)\in[\frac{1}{2},\frac{3}{2}]$ for $M$ sufficiently large. Thus $\sqrt{\frac{1}{N}\sum_{i=1}^N\frac{\{W(\alpha_0,Y_i)'r_0\}^2}{(1+W(\alpha_0,Y_i)'r_0)^2}}\geq\frac{2}{3}\sqrt{r_{0T}'\left(\frac{1}{N}\sum_{i=1}^N\widetilde{W}_i\widetilde{W}_i'\right)r_{0T}}\geq\Omega_p\left(M\sqrt{s/N}\right)$. 
\end{example}

\section{Proofs of Results for Section \ref{Sec:Rates:covariates}}\label{sec:proof:cov}
\subsection{Proof of Theorem \ref{thm:main:covariate}}
It is sufficient to verify each condition of Lemma \ref{lemma:main:FO-covariate} to prove Theorem~\ref{thm:main:covariate}. 
Firstly, the conditions (i)-(iv) hold as follows:
\begin{enumerate}
    \item Condition (i): By Lemma \ref{lemma:MinMax:Claim1:covariate}, with probability $1-\delta$ where $\delta\geq\max\{\exp(-\bar{C}_0\{\sqrt{\log M}\sqrt{Nh^d}+Nh^d\}),\log(M)/(\delta_NNh^d),1/2^M\}$ for some universal constant $C$,
$$\begin{array}{rl}
&\quad\displaystyle \min_{\alpha \in \mathcal{A}_x(\hat\alpha)} Q^{FO}(\alpha,r_*)-Q^{FO}(\alpha_{*x},r_*)\\
& \displaystyle 
 \geq-\max_{\alpha \in \mathcal{A}_x(\hat\alpha), \|X_i-x\|\leq h}\|\alpha(X_i)-\alpha_0(X_i)\|_1 C\sqrt{\frac{\log (M/\delta)}{Nh^d}}\\
&\quad \displaystyle {-} C\max_{\|X_i-x\|\leq h,\alpha \in \mathcal{A}_x(\hat\alpha)}\!\!\|\alpha(X_i)\! - \!\alpha_0(X_i)\|^2 \!\!-Csdh^2\left(\frac{Md}{\sqrt{Nh^{d}}}+Mdh^2\right).
\end{array}$$
Thus with 
 $\displaystyle\mathcal{R}_x(\hat{\alpha})=\max_{\|X_i-x\|\leq h,\alpha \in \mathcal{A}_x(\hat\alpha)}\|\alpha(X_i)-\alpha_0(X_i)\|_1 C\sqrt{\frac{\log(M/\delta)}{Nh^d}}+C\max_{\|X_i-x\|\leq h,\alpha \in \mathcal{A}_x(\hat\alpha)}\|\alpha(X_i) - \alpha_0(X_i)\|^2+Csdh^2\left(\frac{Md}{\sqrt{Nh^{d}}}+Mdh^2\right)$, the first term in (i) of Lemma \ref{lemma:main:FO-covariate} holds. Denote $\widetilde{W}_i:=\frac{W(\alpha_0(X_i),Y_i)}{1+W(\alpha_0(X_i),Y_i)'r_0(X_i)}$ and $T_i:=\mathrm{supp}(r_0(X_i)-r_{*x}(X_i))$, where $r_{*x}(X_i):=r_0(x)+\nabla r_0(x)(X_i-x)$. By Assumption \ref{ass:iid-min-den} $|T_{i}|\leq 2s$. Let $r_{0j}(\cdot)$ denote the $j$-th element of $r_0(\cdot)$ and $r_{*xj}(\cdot)$ denote the $j$-th element of $r_{*x}(\cdot)$. Since $r_{*xj}(X_i)-r_{0j}(X_i)=r_{0j}(x)+\nabla_x r_{0j}(x)(X_i-x) - r_{0j}(X_i)$, we have 
\begin{equation}\label{eq:r_0 estimate:ell_1 error}
\left|r_{*xj}(X_i)-r_{0j}(X_i)\right|\leq\bar{K}_{x,2}dh^2\ \mbox{for any $j\in T_i$},
\end{equation}
where we use the definition of $\bar{K}_{x,2}$ in the (component wise) Taylor expansion, and $\bar{K}_{x,2}$ is independent of $s,h,N,M,d$. Then with probability $1-\delta_{\alpha}$, $$\begin{array}{rl}
&\displaystyle\quad\frac{1}{N}\sum_{i=1}^N\!K_h(X_i-x)\left\{\frac{W(\alpha_0(X_i),Y_i)'(r_0(X_i)-r_{*x}(X_i))}{1+ W(\alpha_0(X_i),Y_i)'r_0(X_i)}\right\}^2\\
&\displaystyle\leq_{(1)}C's^2\{\max_{i\in[N], j\in T_i}(r_{0j}(X_i)-r_{*xj}(X_i))^2\}\leq_{(2)}Cs^2d^2h^4,
\end{array}$$ 
where (1) holds with probability $1-\delta_{\alpha}$ by Assumption \ref{ass:smoothness-assumptions} and sparsity condition in Assumption \ref{ass:iid-min-den}, (2) holds by (\ref{eq:r_0 estimate:ell_1 error}), and $C,C'$ are universal constants. Thus the second term in (i) of Lemma \ref{lemma:main:FO-covariate} holds with $\mathcal{G}(x)=Cs^2d^2h^4$. 
\item Condition (ii): By Lemma \ref{lemma:choice of lambda:covariate}, the choice of $\lambda_x$ in Theorem \ref{thm:main:covariate} makes (ii) of Lemma \ref{lemma:main:FO-covariate} holds. 
\item Condition (iii) of Lemma \ref{lemma:main:FO-covariate} follows immediately from Lemma \ref{lemma:covariate:score*approx}, which holds with probability $1-\delta-\frac{C(\bar{K}_{\max}/\bar{K}_2)^2}{Nh^d}$ for some universal constant $C$. 
\item Condition (iv): note that by Taylor expansion, for each $j\in [p], i\in[n]$ there is $\tilde \alpha^{ji}$ in the line segment between $\hat \alpha(X_i)$ and $\alpha_0(X_i)$ such that
$$ \begin{array}{rl}
W^{FO}_j(\alpha_{*x}(X_i),Y_i) &= W_j(\hat \alpha(X_i),Y_i) + (\alpha_{*x}(X_i)-\hat\alpha(X_i))'\nabla_\alpha W_j(\hat\alpha(X_i),Y_i)\\ 
& = W_j(\alpha_{0}(X_i),Y_i)+(\hat\alpha(X_i)-\alpha_{0}(X_i))'\nabla_\alpha W_j(\alpha_{0}(X_i),Y_i)\\
&\quad\displaystyle+\frac{1}{2}(\hat\alpha(X_i)-\alpha_{0}(X_i))'\nabla_\alpha^2W_j(\tilde \alpha^{ji},Y_i)(\hat\alpha(X_i)-\alpha_{0}(X_i))\\
&\quad + (\alpha_{*x}(X_i)-\hat\alpha(X_i))'\nabla_\alpha W_j(\hat\alpha(X_i),Y_i)\\ 
& = W_j(\alpha_{0}(X_i),Y_i)+(\alpha_{*x}(X_i)-\alpha_{0}(X_i))'\nabla_\alpha W_j(\alpha_{0}(X_i),Y_i) + R_{ij}^{num}(x,\hat\alpha),
\end{array}$$
where $R_{ij}^{num}(x,\hat\alpha)$ above contains second order terms. Following similar proof steps as above, for $R_i^{dem}(x,\hat \alpha)$ containing second order terms, we have  $$\frac{1+W(\alpha_0(X_i),Y_i)'r_0(X_i)}{1+W^{FO}(\alpha_{*x}(X_i),Y_i)'r_0(X_i)}=1 - \frac{(\alpha_{*x}(X_i)-\alpha_0(X_i))'\nabla_\alpha W(\alpha_0(X_i),Y_i)'r_0(X_i)+ R_i^{dem}(x,\hat \alpha)}{1+W^{FO}(\alpha_{*x}(X_i),Y_i)'r_0(X_i)}.$$ 
Particularly, Taylor expansion implies that for some $\tilde \alpha(X_i)$, $\bar \alpha(X_i)$, 
{\small$$ \begin{array}{rl}
W(\hat \alpha(X_i),Y_i)'r_0(X_i)&= W(\alpha_0(X_i),Y_i)'r_0(X_i) + (\hat \alpha(X_i)-\alpha_0(X_i))'\nabla_\alpha W(\alpha_0(X_i),Y_i)'r_0(X_i)\\
&\quad\displaystyle + \frac{1}{2}(\hat \alpha(X_i)-\alpha_0(X_i))'\nabla_\alpha^2 \{W(\tilde \alpha(X_i),Y_i)'r_0(X_i)\}(\hat \alpha(X_i)-\alpha_0(X_i)),
\end{array}$$}
{\small$$\begin{array}{rl}
&\quad(\alpha_{*x}(X_i)-\hat\alpha(X_i))'\nabla_\alpha \{W(\hat\alpha(X_i),Y_i)'r_0(X_i)\}\\  
&= (\alpha_{*x}(X_i)-\hat\alpha(X_i))'\left[\nabla_\alpha \{W(\alpha_0(X_i),Y_i)'r_0(X_i)\}+ \nabla_\alpha^2 \{W(\bar{\alpha}(X_i),Y_i)'r_0(X_i)\}(\hat\alpha(X_i)-\alpha_0(X_i))\right].
\end{array}$$}
Combining all these arguments above, we have $\frac{W^{FO}_j(\alpha_{*x}(X_i),Y_i) }{1+W^{FO}(\alpha_{*x}(X_i),Y_i)'r_0(X_i)} =  \frac{W_j(\alpha_0(X_i),Y_i)}{1+W(\alpha_0(X_i),Y_i)'r_0(X_i)} + \widetilde R_{ij}(x,\hat\alpha)$, $|\widetilde R_{ij}(x,\hat\alpha)|\leq C'(\|\alpha_{*x}(X_i)-\alpha_0(X_i)\|+\|\hat{\alpha}(X_i)-\alpha_0(X_i)\|^2)$ for some universal constant $C'$. Thus (iv) of Lemma \ref{lemma:main:FO-covariate} holds with $\widetilde{R}_i=C'(\|\alpha_{*x}(X_i)-\alpha_0(X_i)\|+\|\hat{\alpha}(X_i)-\alpha_0(X_i)\|^2)$. 
\end{enumerate}
Hence from the argument above and statement of Theorem \ref{thm:main:covariate}, conditions (i)-(iv) of Lemma \ref{lemma:main:FO-covariate} hold with probability $1-\delta$. Next, the remaining conditions of Lemma \ref{lemma:main:FO-covariate} hold with probability $1-\delta_{\alpha}$ by Assumption \ref{ass:smoothness-assumptions}. Thus Theorem \ref{thm:main:covariate} holds. \proofend
 
\subsection{Proof of Corollary \ref{cor:FO:covariate}}
Recall  $\displaystyle\mathcal{A}_x(\hat\alpha) =\prod_{j=1}^M \left\{ \alpha_j+\beta_j(X-x) : \begin{array}{l}\alpha_j \in [\hat \alpha_j(x) - cv_x \hat s_{j,0}(x), \hat\alpha_j(x) + cv_x \hat s_{j,0}(x)], \\ 
\beta_{jk} \in [\tilde \beta_{jk}(x) - cv_x \hat s_{j,k}(x), \tilde \beta_{jk}(x) + cv_x \hat s_{j,k}(x)], k \in [d] 
\end{array}\right\}$,
where $cv_x := (1-\delta_\alpha)\mbox{-quantile of} \max_{ j \in  [M], \ell \in\{0\}\cup[d]}\left|\sum_{i=1}^N \xi_i \frac{\hat \epsilon_{ij} \hat{\zeta}_{i\ell}}{\hat s_{j,\ell}(x)}\right|$
$\mbox{conditional on }\{Y_i,X_i\}_{i\in[N]}$, $\{\xi_i\}_{i\in[N]}$ are i.i.d. $\mathcal{N}(0,1)$ random variables. Here $\hat\epsilon_{ij}:=Y_{ij} - \hat{\alpha}_j(x)-\tilde\beta_j(x)(X_i-x)$, and $\hat{\zeta}_{i} := (Z'W_xZ)^{-1}Z_i K_h(X_i-x)$  with $Z_i=(1, (X_i-x)')'$ and $W_{x}$ is a diagonal $N\times N$ matrix with $W_{x,ii}:=K_h(X_i-x)$, $\hat s_{j,\ell}(x) := \sqrt{\sum_{i=1}^N \hat \epsilon_{ij}^2 \hat{\zeta}_{i\ell}^2}$, where $\hat{\zeta}_{i\ell}$ is the $\ell$-th entry of $\hat{\zeta}_i$, $\forall \ell\in\{0\}\cup[d]$.  Note that by triangular inequality we have 
$$\begin{array}{rl}
&\quad\displaystyle\mathbb{P}\left(\max_{\alpha \in \mathcal{A}_x(\hat\alpha), \|X_i-x\|\leq h}\norm{\alpha(X_i)-\alpha_0(X_i)}\geq t\right)\\
&\displaystyle\leq\mathbb{P}\left(\max_{\alpha \in \mathcal{A}_x(\hat\alpha), \|X_i-x\|\leq h}\norm{\hat{\alpha}(x)+\tilde{\beta}(x)(X_i-x)-\alpha_0(X_i)}\geq\frac{t}{2}\right)\\
&\displaystyle\ +\mathbb{P}\left(cv_{x}\sqrt{\sum_{j\in[M]}\left(\sum_{\ell\in[d]}\hat{s}_{j,\ell}(x)|X_{il}-x_{l}|+\hat{s}_{j,0}(x)\right)^2}\geq\frac{t}{2}\right)\\
&\displaystyle\leq_{(a)}\mathbb{P}\left(\max_{\alpha \in \mathcal{A}_x(\hat\alpha), \|X_i-x\|\leq h}\norm{\hat{\alpha}(x)+\tilde{\beta}(x)(X_i-x)-\alpha_0(X_i)}\geq\frac{t}{2}\right)\\
&\displaystyle\ +\mathbb{P}\left(cv_{x}\sqrt{2dh^2\sum_{j=1}^M\max_{\ell\in[d]}\hat{s}_{j,\ell}(x)^2+2\sum_{j=1}^M\hat{s}_{j,0}(x)^2}\geq\frac{t}{2}\right),
\end{array}$$
where the first inequality follows from the triangular inequality, (a) above follows because $\sum_{\ell\in[d]}(X_{i\ell}-x_{\ell})^2\leq h^2$, $\sum_{j\in[M]}\bigg(\sum_{\ell\in[d]}\hat{s}_{j,\ell}(x)(X_{il}-x_{l})+\hat{s}_{j,0}(x)\bigg)^2\leq 2dh^2\sum_{j=1}^M\max_{\ell\in[d]}\hat{s}_{j,\ell}(x)^2+2\sum_{j=1}^M\hat{s}_{j,0}(x)^2$. Note that 
$$\begin{array}{rl}
&\displaystyle\quad\mathbb{P}\left(\max_{\alpha \in \mathcal{A}_x(\hat\alpha), \|X_i-x\|\leq h}\norm{\hat{\alpha}(x)+\tilde{\beta}(x)(X_i-x)-\alpha_0(X_i)}\geq\frac{t}{2}\right)\\
&\displaystyle\leq_{(1)}\mathbb{P}\left(\max_{\alpha \in \mathcal{A}_x(\hat\alpha), \|X_i-x\|\leq h}\norm{(\hat{\alpha}(x)-\alpha_0(x))+(\tilde{\beta}(x)-\nabla\alpha_0(x))(X_i-x)}+\sqrt{M}\bar{K}_{x,2}\|X_i-x\|^2\geq\frac{t}{2}\right),
\end{array}$$
where (1) holds by Taylor expansion and Assumption~\ref{ass:smoothness-assumptions}. Furthermore, Proposition \ref{prop:concentration:covariate} implies that both $\norm{\hat{\alpha}(x)-\alpha_0(x)}_2\leq\tilde{C}_0(x)\left(\sqrt{\frac{M}{Nh^d}}+\sqrt{M}dh^2+\sqrt{\left(\frac{1}{Nh^d}+h^4d^2\right)\log(1/\delta)}\right)$ and $\norm{\tilde{\beta}(x)-\nabla\alpha_0(x)}_{2,2}\leq\displaystyle\tilde{C}_1(x)\sqrt{d}\bigg(\sqrt{\frac{M}{Nh^{d+2}}}+\sqrt{M}dh^2+\sqrt{\left(\frac{1}{Nh^{d+2}}+h^4d^4\right)\log(1/\delta)}\bigg)$ hold with probability $1-\delta$ for some absolute constants $\tilde{C}_0(x)$ and $\tilde{C}_1(x)$. Thus for $X_i$ such that $\|X_i-x\|\leq h$, with probability $1-\delta$, for some absolute constant $\tilde{C}(x)$, 
$$\begin{array}{rl}
&\displaystyle\quad\norm{(\hat{\alpha}(x)-\alpha_0(x))+(\tilde{\beta}(x)-\nabla\alpha_0(x))(X_i-x)}\\
&\displaystyle\leq\tilde{C}(x)\left[\sqrt{\frac{M}{Nh^d}}+\sqrt{M}dh^2+\sqrt{\left(\frac{1}{Nh^d}+h^4d^2\right)\log(1/\delta)}\right]\\
&\displaystyle\quad+\tilde{C}(x)\left[h\sqrt{d}\left(\sqrt{\frac{M}{Nh^{d+2}}}+\sqrt{M}dh^2+\sqrt{\left(\frac{1}{Nh^{d+2}}+h^4d^4\right)\log(1/\delta)}\right)\right].
\end{array}$$
Additionally, $\sum_{i=1}^N\frac{\hat \epsilon_{ij}^2\hat{\zeta}_{i\ell}^2}{\hat s_{j,\ell}(x)^2}=1$ for any $j\in[M], \ell\in[d]\cup\{0\}$, so 
conditional on $\{Y_i,X_i\}$, $\sum_{i=1}^N \xi_i \frac{\hat \epsilon_{ij} \hat{\zeta}_{i\ell}}{\hat s_{j,\ell}(x)}\sim \mathcal{N}\left(0,1\right)$, thus 
$\mathbb{E}\left[\max_{ j \in  [M], \ell \in\{0\}\cup[d]}\sum_{i=1}^N \xi_i \frac{\hat \epsilon_{ij} \hat{\zeta}_{i\ell}}{\hat s_{j,\ell}(x)}\right]\leq\sqrt{2\log\{M(d+1)\}}$.
By Lemma~\ref{lemma:borell} (Borell-TIS inequality \citep{borell1975brunn}), with probability $1-\delta$, we have 
\begin{equation}\label{eq:cov:concentration:normal}
    \max_{ j \in  [M], \ell \in [d]\cup\{0\}} \left|\sum_{i=1}^N\frac{ \xi_i \hat \epsilon_{ij} \hat{\zeta}_{i\ell}}{\hat s_{j,\ell}(x)}\right|\leq2\sqrt{\log\left(\frac{M(d+1)}{\delta}\right)}.
\end{equation}
Let $\delta\leq\delta_{\alpha}$ and recall that $\delta\geq\exp(-C\{\sqrt{\log M}\sqrt{Nh^d}+Nh^d\})$, and set $t=C\sqrt{\frac{Md\log\{M(d+1)/\delta\}}{Nh^d}}+C\sqrt{M}dh^2+C\sqrt{\frac{\log(1/\delta)}{Nh^d}}$ for some absolute constant $C$ in the above, then the above results imply that $\max_{\alpha \in \mathcal{A}_x(\hat\alpha), \|X_i-x\|\leq h}\norm{\alpha(X_i)-\alpha_0(X_i)}\leq C\sqrt{Md}\left[\sqrt{\frac{\log\{M(d+1)/\delta\}}{Nh^d}}+\sqrt{d}h^2\right]+C\sqrt{\frac{\log(1/\delta)}{Nh^d}}$ holds with probability $1-2\delta$. Additionally, by triangular inequality, for any $t'>0$ we have 
$$\begin{array}{rl}
&\displaystyle\quad\mathbb{P}\left(\max_{\alpha \in \mathcal{A}_x(\hat\alpha), \|X_i-x\|\leq h}\norm{\alpha(X_i)-\alpha_0(X_i)}_1\geq t'\right)\\
&\displaystyle\leq\mathbb{P}\left(\max_{\alpha \in \mathcal{A}_x(\hat\alpha), \|X_i-x\|\leq h}\norm{\hat{\alpha}(x)+\tilde{\beta}(x)(X_i-x)-\alpha_0(X_i)}_1\!\!\geq\!\frac{t'}{2}\right)\\
&\displaystyle\quad+\mathbb{P}\left(\!cv_{x}\!\!\left[\sum_{j\in[M],\ell\in[d]}\!\!\!\!\!\!\!\!\!\!\!|\hat{s}_{j,\ell}(x)(X_{i\ell}-x_{\ell})|+\!\!\!\!\!\sum_{j\in[M]}\!\!\!\!|\hat{s}_{j,0}(x)|\right]\geq\frac{t'}{2}\right)\\
&\displaystyle\leq_{(b)}\mathbb{P}\left(\max_{\alpha \in \mathcal{A}_x(\hat\alpha), \|X_i-x\|\leq h}\norm{\hat{\alpha}(x)+\tilde{\beta}(x)(X_i-x)-\alpha_0(X_i)}_1\geq\frac{t'}{2}\right)\\
&\displaystyle\quad\quad+\mathbb{P}\left(cv_{x}\left[\sqrt{Md}h\sqrt{\sum_{j=1}^M\max_{\ell\in[d]}\hat{s}_{j,\ell}^2(x)}+\sqrt{M}\sqrt{\sum_{j\in[M]}\hat{s}_{j,0}^2(x)}\right]\geq\frac{t'}{2}\right),
\end{array}$$
where (b) follows since
$$\sum_{j\in[M],\ell\in[d]}|\hat{s}_{j,\ell}(x)(X_{il}-x_{l})|+\sum_{j\in[M]}|\hat{s}_{j,0}(x)|\leq\sqrt{Md}h\sqrt{\sum_{j=1}^M\max_{\ell\in[d]}\hat{s}_{j,\ell}^2(x)}+\sqrt{M}\sqrt{\sum_{j\in[M]}\hat{s}_{j,0}^2(x)}.$$ 
Note 
$$\begin{array}{rl}
&\displaystyle\quad\mathbb{P}\left(\max_{\alpha \in \mathcal{A}_x(\hat\alpha), \|X_i-x\|\leq h}\norm{\hat{\alpha}(x)+\tilde{\beta}(x)(X_i-x)-\alpha_0(X_i)}_1\geq\frac{t'}{2}\right)\\
&\displaystyle\leq_{(1)}\mathbb{P}\left(\max_{\alpha \in \mathcal{A}_x(\hat\alpha), \|X_i-x\|\leq h}\norm{(\hat{\alpha}(x)-\alpha_0(x))+(\tilde{\beta}(x)-\nabla\alpha_0(x))(X_i-x)}_1+M\bar{K}_{x,2}h^2\geq\frac{t'}{2}\right),
\end{array}$$
where (1) holds by Taylor expansion. By Proposition \ref{prop:concentration:covariate}, with probability $1-\delta$, 
$$\norm{\hat{\alpha}(x)-\alpha_0(x)}_1\leq\tilde{C}_0(x)\left(\frac{M}{\sqrt{Nh^d}}+Mdh^2+\sqrt{M\left(\frac{1}{Nh^d}+h^4d^2\right)\log(1/\delta)}\right),$$ $$\norm{\tilde{\beta}(x)-\nabla\alpha_0(x)}_{1,1}\leq\tilde{C}_1(x)d\left(\frac{M}{\sqrt{Nh^{d+2}}}+Md^2h^2+\sqrt{M\left(\frac{1}{Nh^{d+2}}+h^4d^4\right)\log(1/\delta)}\right).$$
Hence for some absolute constant $\tilde{C}'(x)$, for $X_i$ such that $\|X_i-x\|\leq h$, with probability $1-\delta$ we have 
$$\begin{array}{rl}
&\displaystyle\quad\norm{(\hat{\alpha}(x)-\alpha_0(x))+(\tilde{\beta}(x)-\nabla\alpha_0(x))(X_i-x)}_1\\
&\displaystyle\leq\tilde{C}'(x)\left[\frac{M}{\sqrt{Nh^d}}+Mdh^2+\sqrt{M\left(\frac{1}{Nh^d}+h^4d^2\right)\log(1/\delta)}\right]\\
&\displaystyle\quad+\tilde{C}'(x)\left[hd\left(\frac{M}{\sqrt{Nh^{d+2}}}+Md^2h^2+\sqrt{M\left(\frac{1}{Nh^{d+2}}+h^4d^4\right)\log(1/\delta)}\right)\right].
\end{array}$$
Let $\delta\leq\delta_{\alpha}$, set $t'=C'\frac{Md}{\sqrt{Nh^d}}+C'Mdh^2+C'd\sqrt{\frac{M\log(1/\delta)}{Nh^d}}$ in the above, where $C'$ is an absolute constant. Note that Assumption~\ref{ass:moment-assumptions} implies $dh^2<1$ and note that $\delta\geq2^{-M}$, then the previous results imply that with probability $1-2\delta$, $\max_{\alpha \in \mathcal{A}_x(\hat\alpha), \|X_i-x\|\leq h}\norm{\alpha(X_i)-\alpha_0(X_i)}_1\leq C'M\left(\frac{d}{\sqrt{Nh^d}}+dh^2\right)+C'd\sqrt{\frac{M\log(1/\delta)}{Nh^d}}$. Note that $\lambda_x=C\sqrt{\log(p/\delta)/Nh^d}+Ch^2$ under the given condition, Theorem~\ref{thm:main:covariate} implies that with probability $1-C\delta-C\delta_\alpha-\frac{C(\bar{K}_{\max}/\bar{K}_2)^2}{Nh^d}-C\epsilon$,
$$\begin{array}{rl}
\widetilde{\mathcal{R}}(\hat\alpha) &\displaystyle\leq C_1\max_{\alpha \in \mathcal{A}_x(\hat\alpha), \|X_i-x\|\leq h}\|\alpha(X_i)-\alpha_0(X_i)\|_1 \sqrt{\frac{\log(M/\delta)}{Nh^d}}+C_1sdh^2\sqrt{\frac{\log(1/\delta)}{Nh^d}}+C_1s^2d^2h^4\\
&\displaystyle\quad + C_1\max_{\alpha \in \mathcal{A}_x(\hat\alpha), \|X_i-x\|\leq h} \|\alpha(X_i)-\alpha_0(X_i)\|^2+C_1sdh^2\left(\frac{Md}{\sqrt{Nh^{d}}}+Mdh^2\right)\\
&\displaystyle\leq_{(1)}C_2(M+s^2)d^2h^4,
\end{array}$$
where $C,C_1,C_2$ are constants independent of $s,N,M,h,d$ above. Here (1) above uses Assumption~\ref{ass:moment-assumptions}, and the high-probability bounds derived for ${\displaystyle\max_{\alpha \in \mathcal{A}_x(\hat\alpha), \|X_i-x\|\leq h}}\|\alpha(X_i)-\alpha_0(X_i)\|_1$ and ${\displaystyle\max_{\alpha \in \mathcal{A}_x(\hat\alpha), \|X_i-x\|\leq h}} \|\alpha(X_i)-\alpha_0(X_i)\|^2$. Thus, Theorem~\ref{thm:main:covariate} implies $\sqrt{\frac{1}{N}\sum_{i=1}^N K_h(X_i-x)\Delta_{x,i}^{FO}}\leq C\sqrt{\frac{sM+s\log(1/\delta)}{Nh^d}}+C(\sqrt{M}+s)dh^2$. 

Furthermore, Assumption \ref{ass:moment-assumptions} requires $h$ to satisfy 
$\left(\frac{M^3}{N\delta_N^2}\right)^{1/d}\leq h\leq\sqrt{\frac{\delta_N\xi}{sd}}$. By first order condition, we have $h^*=\min\left\{\sqrt{\frac{\delta_N\xi}{sd}},\max\left\{\left(\frac{M^3}{N\delta_N^2}\right)^{1/d},4^{-\frac{2}{d+4}}\left(\frac{sM+s\log(1/\delta)}{N(s+\sqrt{M})^2}\right)^{\frac{1}{d+4}}\right\}\right\}$. Particularly, when $h=h^*$, by direct calculation, we have $\sqrt{\frac{sM+s\log(1/\delta)}{Nh^d}}+(\sqrt{M}+s)dh^2\leq\bar{C}\min\left\{B_{1},B_2\right\}$ for some universal constant $\bar{C}$, where $B_1:=\delta_N\sqrt{\frac{s(M+\log(1/\delta))}{M^3}}+(\sqrt{M}+s)d\left(\frac{M^3}{N\delta_N^2}\right)^{\frac{2}{d}}$,
$B_2:=\sqrt{\frac{s(M+\log(1/\delta))}{N}}\left(\frac{sd}{\delta_N\xi}\right)^{d/4}\!\!+\frac{(\sqrt{M}+s)\delta_N\xi}{s}$.
\proofend

\subsection{Technical Lemmas for Section \ref{Sec:Rates:covariates}}
\begin{lemma}\label{lemma:main:FO-covariate}
For any $x\in \mathcal{X}$, define $\alpha_{*x}(X):= \alpha_0(x)+\nabla_x\alpha_0(x)(X-x)$ and define $r_{*x}(X):= r_0(x)+\nabla_xr_0(x)(X-x)$. Moreover for any $\bar \alpha \in [0,1]^M, \bar y \in \{0,1\}^M$, let 
$$W^{FO}_i(\bar \alpha,\bar{y})=W(\hat \alpha(X_i),\bar y)+\nabla_\alpha W(\hat\alpha(X_i),\bar y)(\bar \alpha-\hat\alpha(X_i)).$$ 
Suppose\\
(i) $\min_{\alpha\in\mathcal{A}_x(\hat \alpha)}Q^{FO}(\alpha,r_{*x})-Q^{FO}(\alpha_{*x},r_{*x}) \geq -\mathcal{R}_x(\hat{\alpha})$, \\
and $\displaystyle\frac{1}{N}\sum_{i=1}^NK_h(X_i-x) \left\{\frac{W(\alpha_0(X_i),Y_i)'(r_0(X_i)-r_{*x}(X_i))}{1+ W(\alpha_0(X_i),Y_i)'r_0(X_i)}\right\}^2 \leq \mathcal{G}(x)$,\\
(ii)$\displaystyle\frac{\lambda_x}{c} \geq \left\|  \mathrm{diag}^{-1}(w_x)\frac{1}{N}\sum_{i=1}^N\frac{  K_h(X_i-x) }{1+W^{FO}(\alpha_{*x}(X_i),Y_i)^\prime r_0(X_i)} \begin{pmatrix} W^{FO}(\alpha_{*x}(X_i),Y_i)\\{\rm vec}( W^{FO}(\alpha_{*x}(X_i),Y_i)\frac{(X_i-x)'}{h} )
\end{pmatrix}\right\|_\infty $\\
(iii) $\displaystyle\left|\frac{1}{N}\sum_{i=1}^N K_h(X_i-x)\frac{ W(\alpha_0(X_i),Y_i)^\prime(r_{*x}(X_i)-r_0(X_i))}{1+W(\alpha_0(X_i),Y_i)^\prime r_0(X_i)}\right|\leq Csdh^2\sqrt{\frac{\log(1/\delta)}{Nh^d}}$\\ 
(iv) $\displaystyle\left\|\frac{ W^{FO}(\alpha_{*x}(X_i),Y_i)}{1+W^{FO}(\alpha_{*x}(X_i),Y_i)^\prime r_0(X_i)}-\frac{ W(\alpha_0(X_i),Y_i)}{1+W(\alpha_0(X_i),Y_i)^\prime r_0(X_i)}\right\|_\infty \leq \widetilde{R}_{i}$. (v) $\alpha_{*x} \in \mathcal{A}_x(\hat\alpha)$. \\
(vi) $\displaystyle \max_{\|X_i-x\|\leq h, j\in [p], \tilde \alpha \in \mathcal{A}_x(\hat\alpha)}|v'\nabla_\alpha W_j(\tilde \alpha(X_i),Y_i)|  +|v'\nabla_\alpha W(\tilde \alpha(X_i),Y_i)'r_0(X_i)| \leq M^{1/2}K_{\alpha,1}\|v\|$, \\
(vii) $\displaystyle\max_{\|X_i-x\|\leq h, j \in [p], \tilde \alpha \in \mathcal{A}_x(\hat\alpha)}|u'\nabla_\alpha^2 W(\tilde\alpha(X_i),Y_i)'r_0(X_i)v|+|u'\nabla_\alpha^2 W_j(\tilde\alpha(X_i),Y_i)v|\leq K_{\alpha,2}\|u\|\|v\|$, and\\
${\displaystyle\max_{\tilde{x}\in\mathcal{X},i\in[N]}}|u'\nabla_{\alpha}W(\alpha_0(\tilde{x}),Y_i)'r_0(X_i)|\leq\bar{K}_w\|u\|_1$, ${\displaystyle\max_{\tilde{x}\in\mathcal{X},i\in[N]}}|u'\nabla_{\alpha}W(\alpha_0(\tilde{x}),Y_i)'v|\leq \bar{K}_w\norm{u}_{\infty}\norm{v}_{1}$. (viii) ${\displaystyle\max_{\|\tilde{x}-x\|\leq h, \alpha \in \mathcal{A}_x(\hat\alpha)}}\|\alpha(x)-\alpha_0(x)\|M^{1/2}K_{\alpha,1}/\xi < 1/4$, ${\displaystyle\max_{\|\tilde{x}-x\|\leq h, \tilde\alpha \in \mathcal{A}_x(\hat\alpha)}}\|\tilde \alpha(x)-\alpha_0(x)\|^2 K_{\alpha,2}/\xi \leq 1/16$. 
(ix) ${\displaystyle\max_{j\in [p], k\in[M], x\in \mathcal{X}}}|v'\nabla_{x}^2 r_{0j}(x)v|+ |v'\nabla^2\alpha_{0k}(x)v|\leq \bar{K}_{x,2}\|v\|^2$. \\
(x) $\delta_N\geq s{\displaystyle\max_{\alpha \in \mathcal{A}_x(\hat\alpha), \|X_i-x\|\leq h}}\|\tilde\alpha(X_i)-\alpha_0(X_i)\|^2/ \{\xi\kappa_{x,\bar c}^2{\displaystyle\min_{j\in[p], k\in[d]}}\{w_{xjk}^2,w_{xj}^2\}\}$. \\
$\delta_N\geq{\displaystyle\max_{\alpha \in \mathcal{A}_x(\hat\alpha), \|X_i-x\|\leq h}}\|\tilde\alpha(X_i)-\alpha_0(X_i)\|/ \{\xi{\displaystyle\min_{j\in[p], k\in[d]}}\{w_{xjk},w_{xj}\}\}$. 

Then for all $x \in \mathcal{X}$, for some universal constant $C$, with probability $1-C\epsilon$, 
$$\sqrt{\frac{1}{N} \sum_{i=1}^NK_h(X_i-x)\left\{\frac{W(\alpha_0(X_i),Y_i)^\prime(\hat{r}^{FO}(X_i,x)-r_{*x}(X_i))}{1+W(\alpha_0(X_i),Y_i)^\prime r_0(X_i)}\right\}^2}\leq C\left(\lambda_x \left( 1 + \frac{1}{c}\right) \frac{\sqrt{s}}{\kappa_{x,\bar{c}}} + \widetilde{\mathcal{R}}(x)^{1/2}\right),$$
where $\displaystyle\widetilde{\mathcal{R}}(x)\leq \mathcal{R}_x(\hat\alpha) + \mathcal{G}(x) + Csdh^2\left(\!\!\sqrt{\frac{\log(1/\delta)}{Nh^d}}\! + \! \frac{1}{N}\sum_{i=1}^NK_h(X_i-x)|\widetilde{R}_i|\!+\!\frac{1}{N}\sum_{i=1}^NK_h(X_i-x)|\widetilde{R}_i|^2\right)$.
\end{lemma}
\begin{proof}[Proof of Lemma \ref{lemma:main:FO-covariate}]
Let $Q^{FO}(\alpha,r_{a,b}):= \frac{1}{N}\sum_{i=1}^N K_h(X_i-x)\log(1+ W^{FO}(\alpha(X_i),Y_i)'r_{a,b}(X_i,x))$. 
Following similar proof steps as in Proposition~\ref{prop:plug-in:no-covariate} and Theorem~\ref{thm:main:FO}, using Assumption~\ref{ass:iid-min-den} and the given conditions, with probability $1-C\epsilon$, we have 
$$1 +  W^{FO}(\alpha_{*x}(X_i),Y_i)'\hat r_{\hat a,\hat b}(X_i))>c_1\xi, 1 + W^{FO}(\alpha_{*x}(X_i),Y_i)' r_0(X_i)>c_1\xi$$ 
for some $c_1\in(0,1)$. Additionally, note that given any $\bar{y}>-1$, there exists a constant $c'=\frac{1}{2(1+\max\{\bar{y},0\})}$ such that $\log(1+y)\leq y-c'y^2$ holds for $y\in(-1,\bar{y}]$. Hence for some universal constant $c_0>0$ we have 
$$\begin{array}{rl}
       &\displaystyle\quad \log ( 1 +  W^{FO}(\alpha_{*x}(X_i),Y_i)'\hat r_{\hat a,\hat b}(X_i))   -\log( 1 + W^{FO}(\alpha_{*x}(X_i),Y_i)' r_0(X_i)) \\
       &\displaystyle=\log\left(1+\frac{ W^{FO}(\alpha_{*x}(X_i),Y_i)^\prime(\hat{r}_{\hat a,\hat b}(X_i,x)-r_0(X_i))}{1+ W^{FO}(\alpha_{*x}(X_i),Y_i)^\prime r_0(X_i)}\right)\\
       &\displaystyle\leq\frac{ W^{FO}(\alpha_{*x}(X_i),Y_i)^\prime(\hat{r}_{\hat a,\hat b}(X_i,x)-r_0(X_i))}{1+W^{FO}(\alpha_{*x}(X_i),Y_i)^\prime r_0(X_i)}-c_0\left\{\frac{W^{FO}(\alpha_{*x}(X_i),Y_i)^\prime(\hat{r}_{\hat a,\hat b}(X_i,x)-r_0(X_i))}{1+W^{FO}(\alpha_{*x}(X_i),Y_i)^\prime r_0(X_i)}\right\}^2,
\end{array}$$
Define $\Delta_{0,ab}^{FO}:=\frac{W^{FO}(\alpha_{*x}(X_i),Y_i)^\prime(\hat{r}_{\hat a, \hat b}(X_i,x)-r_0(X_i))}{1+W^{FO}(\alpha_{*x}(X_i),Y_i)^\prime r_0(X_i)}$, then 
\begin{equation}\label{eq:covariate:quadratic:Q}
\begin{array}{rl} 
&\quad \displaystyle Q^{FO}(\alpha_{*x},\hat r_{\hat a,\hat b})-Q^{FO}(\alpha_{*x}, r_0)  \\
&\displaystyle \leq \frac{1}{N}\sum_{i=1}^N K_h(X_i-x)\left\{\Delta_{0,ab}^{FO}-c_0(\Delta_{0,ab}^{FO})^2\right\}.
\end{array}
\end{equation}
$$\begin{array}{rl}
\mbox{Next note that}&\displaystyle\quad \frac{1}{N}\sum_{i=1}^N \frac{K_h(X_i-x)W^{FO}(\alpha_{*x}(X_i),Y_i)^\prime(\hat{r}_{\hat a,\hat b}(X_i,x)-r_0(X_i))}{1+W^{FO}(\alpha_{*x}(X_i),Y_i)^\prime r_0(X_i)} \\
&\displaystyle = \underbrace{\frac{1}{N}\sum_{i=1}^N\frac{K_h(X_i-x)W^{FO}(\alpha_{*x}(X_i),Y_i)^\prime( \hat a - r_0(x) + \hat b(X_i-x) - \nabla_x r_0(x)(X_i-x))}{1+W^{FO}(\alpha_{*x}(X_i),Y_i)^\prime r_0(X_i)}}_{\textrm{(I)}} \\
&\displaystyle\quad + \underbrace{\frac{1}{N}\sum_{i=1}^N\frac{K_h(X_i-x)W^{FO}(\alpha_{*x}(X_i),Y_i)^\prime(r_{*x}(X_i)-r_0(X_i))}{1+W^{FO}(\alpha_{*x}(X_i),Y_i)^\prime r_0(X_i)}}_{\textrm{(II)}} = \textrm{(I)} + \textrm{(II)}.\ \ \mbox{Note that}
\end{array}$$
$$\begin{array}{rl}
|\textrm{(I)}|
&\displaystyle\leq\left\|\mathrm{diag}^{-1}(w_x)\frac{1}{N}\sum_{i=1}^N\frac{  K_h(X_i-x) }{1+W^{FO}(\alpha_{*x}(X_i),Y_i)^\prime r_0(X_i)} \left( { W^{FO}(\alpha_{*x}(X_i),Y_i)}\atop{ {\rm vec}(W^{FO}(\alpha_{*x}(X_i),Y_i)\frac{(X_i-x)'}{h} )}\right)\right\|_\infty\\
&\displaystyle\quad\quad\times\left\|\left( {\hat a - r_0(x)}\atop{ {\rm vec}\big(h(\hat b - \nabla_xr_0(x)})\big)\right) \right\|_{w_x,1}\\
&\displaystyle\leq\frac{\lambda_x}{c}\left\|\left( {\hat a - r_0(x)}\atop{ {\rm vec}\big(h(\hat b - \nabla_xr_0(x)})\big)\right) \right\|_{w_x,1},
\end{array}$$
where in the above, the first inequality holds by H\"older's inequality and the second inequality holds by condition (ii). Since $r_{*x}(X_i)-r_0(X_i)=r_0(x)+\nabla_x r_0(x)(X_i-x) - r_0(X_i)$, by Taylor expansion, $\left\|  r_{*x}(X_i)-r_0(X_i) \right\|_1=\| r_0(x)+\nabla_x r_0(x)(X_i-x) - r_0(X_i) \|_1\leq\bar{K}sdh^2$ for some universal constant $\bar{K}$ by condition (ix). Define $\Delta^{FO}(X_i,Y_i):=\left\{\frac{W^{FO}(\alpha_{*x}(X_i),Y_i)^\prime}{1+W^{FO}(\alpha_{*x}(X_i),Y_i)^\prime r_0(X_i)}-\frac{W(\alpha_0(X_i),Y_i)^\prime}{1+W(\alpha_0(X_i),Y_i)^\prime r_0(X_i)}\right\}$.
So for some universal constant $C$,
$$\begin{array}{rl}
|\textrm{(II)}|&\displaystyle \leq_{(1)} \left|\frac{1}{N}\sum_{i=1}^N K_h(X_i-x)\frac{ W(\alpha_0(X_i),Y_i)^\prime(r_{*x}(X_i)-r_0(X_i))}{1+W(\alpha_0(X_i),Y_i)^\prime r_0(X_i)}\right|\\
&\displaystyle\quad+\bigg|\frac{1}{N}\sum_{i=1}^N K_h(X_i-x)\Delta^{FO}(X_i,Y_i)(r_{*x}(X_i)-r_0(X_i)) \bigg|\\
&\displaystyle\leq_{(2)} Csdh^2\sqrt{\frac{\log(1/\delta)}{Nh^d}}+ \frac{1}{N}{\displaystyle\sum_{i=1}^N} K_h(X_i-x)\left\|\Delta^{FO}(X_i,Y_i)\right\|_\infty\|r_{*x}(X_i)-r_0(X_i)\|_1 \\
&\displaystyle \leq_{(3)} Csdh^2\sqrt{\frac{\log(1/\delta)}{Nh^d}} +  \frac{Csdh^2}{N}\sum_{i=1}^N K_h(X_i-x)\widetilde{R}_i,
\end{array}$$
where (1) holds by the triangle inequality, the first term of (2) holds by condition (iii), and the second term of (2) holds by H\"older's inequality, (3) holds by condition (iv) and the bound for $\left\|  r_{*x}(X_i)-r_0(X_i) \right\|_1$ above. So the arguments above imply
\begin{equation}\label{eq:bound:I:II}
\begin{array}{rl}
    &\quad\displaystyle\frac{1}{N}\sum_{i=1}^N K_h(X_i-x)\Delta_{0,ab}^{FO}\\
    &\displaystyle\leq Csdh^2\left[\sqrt{\frac{\log(1/\delta)}{Nh^d}} +  \frac{1}{N}\sum_{i=1}^N K_h(X_i-x)\widetilde{R}_i\right]\\
    &\displaystyle\quad+\frac{\lambda_x}{c}\left\|\left( {\hat a - r_0(x)}\atop{ {\rm vec}\big(h(\hat b - \nabla_xr_0(x)})\big)\right) \right\|_{w_x,1}.
\end{array}
\end{equation}
Let $a^*:=r_0(x), b^*:=\nabla r_0(x)$. So $r_{*x}(X_i)=r_{a^*,b^*}(X_i,x)$, and 
$$\left\|\left( {\hat a - r_0(x)}\atop{ {\rm vec}\big(h(\hat b - \nabla_xr_0(x)})\big)\right) \right\|_{w_x,1}=\big\|\big(\hat a - a^*,{\rm vec}\big(h(\hat b - b^*)\big)\big)\big\|_{w_x,1}=\|r_{\hat{a},\hat{b}}-r_{a^*,b^*}\|_{w_x,h,1}.$$
\begin{equation}\label{eq:covariate:main:steps}
\begin{array}{rl}
&\quad\displaystyle Q^{FO}(\alpha_{*x},\hat r_{\hat a,\hat b}) - \lambda_x \|\hat r_{\hat a,\hat b}\|_{w_x,h,1}\\
& \geq_{(1)}\displaystyle \min_{\alpha \in \mathcal{A}_x(\hat\alpha)} Q^{FO}(\alpha,\hat r_{\hat a,\hat b}) - \lambda_x \|\hat r_{\hat a,\hat b}\|_{w_x,h,1} \\
& \displaystyle \geq_{(2)} \min_{\alpha \in \mathcal{A}_x(\hat\alpha)} Q^{FO}(\alpha,r_{*x}) - \lambda_x \|r_{a^*,b^*}\|_{w_x,h,1}\\
&\displaystyle \geq \min_{\alpha \in \mathcal{A}_x(\hat\alpha)} \{Q^{FO}(\alpha,r_{*x})-Q^{FO}(\alpha_{*x},r_{*x})\} \\
&\quad + Q^{FO}(\alpha_{*x},r_{*x})- \lambda_x \|r_{a^*,b^*}\|_{w_x,h,1} \\
&\displaystyle  \geq_{(3)}  - \mathcal{R}_x(\hat\alpha) + Q^{FO}(\alpha_{*x},r_{*x})- \lambda_x \|r_{a^*,b^*}\|_{w_x,h,1} \\
\end{array}
\end{equation}
where (1) holds from condition (v) $\alpha_{*x} \in \mathcal{A}_x(\hat\alpha)$, (2) holds by the optimality of $\hat a,\hat b$, (3) holds since $\min_{\alpha \in \mathcal{A}_x(\hat\alpha)} \{Q^{FO}(\alpha,r_{*x})-Q^{FO}(\alpha_{*x},r_{*x})\} \geq - \mathcal{R}_x(\hat\alpha)$ by condition (i). Hence,
$$\begin{array}{rl}
&\displaystyle\frac{c_0}{N}\sum_{i=1}^NK_h(X_i-x)(\Delta_{0,ab}^{FO})^2\leq_{(1)}\frac{1}{N}\sum_{i=1}^NK_h(X_i-x)(\Delta_{0,ab}^{FO})-\{Q^{FO}(\alpha_{*x},\hat r_{\hat a,\hat b})-Q^{FO}(\alpha_{*x}, r_0)\}\\
&\displaystyle=\frac{1}{N}\sum_{i=1}^NK_h(X_i-x)(\Delta_{0,ab}^{FO})-\{Q^{FO}(\alpha_{*x},\hat r_{\hat a,\hat b})-Q^{FO}(\alpha_{*x}, r_{*x})+Q^{FO}(\alpha_{*x},r_{*x})-Q^{FO}(\alpha_{*x}, r_0)\}\\
&\displaystyle\leq_{(2)}2Csdh^2\left[\sqrt{\frac{\log(1/\delta)}{Nh^d}} +  \frac{1}{N}\sum_{i=1}^N K_h(X_i-x)\widetilde{R}_i\right]+\frac{\lambda_x}{c}\|\hat r_{\hat a, \hat b} - r_{a^*,b^*}\|_{w_x,h,1}\\
&\quad\quad\displaystyle+\lambda_x\|r_{a^*,b^*}\|_{w_x,h,1}-\lambda_x\|\hat r_{\hat a,\hat b}\|_{w_x,h,1} + \mathcal{R}_x(\hat\alpha),\ \mbox{where (1) is due to }\eqref{eq:covariate:quadratic:Q}, \mbox{(2) from \eqref{eq:bound:I:II}, \eqref{eq:covariate:main:steps}}
\end{array}$$
and following from similar proof steps for \eqref{eq:covariate:quadratic:Q}, $-(Q^{FO}(\alpha_{*x},r_{*x})-Q^{FO}(\alpha_{*x}, r_0))\leq|\textrm{(II)}|$. The above inequality is equivalent to $\frac{c_0}{N}\sum_{i=1}^NK_h(X_i-x)\left\{\frac{W^{FO}(\alpha_{*x}(X_i),Y_i)^\prime(\hat{r}_{\hat a,\hat b}(X_i,x)-r_0(X_i))}{1+W^{FO}(\alpha_{*x}(X_i),Y_i)^\prime r_0(X_i)}\right\}^2\leq\lambda_x\|r_{a^*,b^*}\|_{w_x,h,1} -  \lambda_x\|\hat r_{\hat a,\hat b}\|_{w_x,h,1} + \mathcal{R}_x(\hat\alpha) + \frac{\lambda_x}{c}\|\hat r_{\hat a, \hat b} - r_{a^*,b^*}\|_{w_x,h,1}+2Csdh^2\left(\sqrt{\frac{\log(1/\delta)}{Nh^d}}+\frac{1}{N}\sum_{i=1}^N K_h(X_i-x)\widetilde{R}_i\right)$. Define $\Delta_{*x,ab}^{FO}:=\frac{W^{FO}(\alpha_{*x}(X_i),Y_i)^\prime(\hat{r}_{\hat a, \hat b}(X_i,x)-r_{*x}(X_i))}{1+W^{FO}(\alpha_{*x}(X_i),Y_i)^\prime r_0(X_i)}$, $\Delta_0W_i:=\frac{W(\alpha_0(X_i),Y_i)^\prime(r_0(X_i)-r_{*x}(X_i))}{1+W(\alpha_0(X_i),Y_i)^\prime r_0(X_i)}$.
Then for some universal constant $C'$,
$$\begin{array}{rl}
&\displaystyle \sqrt{\frac{c_0}{N}\sum_{i=1}^NK_h(X_i-x)\left\{\Delta_{0,ab}^{FO}\right\}^2}-\sqrt{\frac{c_0}{N} \sum_{i=1}^NK_h(X_i-x)\left\{\Delta_{*x,ab}^{FO}\right\}^2}\\
&\displaystyle \geq - \sqrt{\frac{c_0}{N} \sum_{i=1}^NK_h(X_i-x)\left\{\Delta^{FO}(X_i,Y_i)(r_0(X_i)-r_{*x}(X_i))\right\}^2} - \sqrt{\frac{c_0}{N} \sum_{i=1}^NK_h(X_i-x)\left\{\Delta_0W_i\right\}^2}   \\
&\displaystyle \geq - \sqrt{\frac{c_0}{N} \sum_{i=1}^NK_h(X_i-x)\left\|\Delta^{FO}(X_i,Y_i)\right\|_\infty^2\|r_0(X_i)-r_{*x}(X_i)\|^2_1}-\sqrt{\frac{c_0}{N} \sum_{i=1}^NK_h(X_i-x)\left\{\Delta_0W_i\right\}^2}   \\
&\displaystyle \geq -C'sdh^2\sqrt{\frac{c_0}{N} \sum_{i=1}^NK_h(X_i-x) \widetilde{R}_i^2} - \sqrt{\frac{c_0}{N} \sum_{i=1}^NK_h(X_i-x)\left\{\Delta_0W_i\right\}^2},   
\end{array}$$ 
where the last inequality follows from condition (iv) and the bound for $\left\|  r_{*x}(X_i)-r_0(X_i) \right\|_1$ above. Using the fact that that for non-negative numbers $a, b, c$ we have $a+c\geq b\Rightarrow 2a^2+2c^2\geq b^2$, 
$$\begin{array}{rl}
&\displaystyle\frac{2c_0}{N}\!\!\sum_{i=1}^N\!K_h(X_i-x)\!\!\left\{\!\frac{W^{FO}(\alpha_{*x}(X_i),Y_i)^\prime(\hat{r}_{\hat a, \hat b}(X_i,x)-r_0(X_i))}{1+W^{FO}(\alpha_{*x}(X_i),Y_i)^\prime r_0(X_i)}\right\}^2 \!\!\!\!\!+\!\! 4\!\left\{\!C'sdh^2\sqrt{\frac{c_0}{N} \sum_{i=1}^NK_h(X_i-x) \widetilde{R}_i^2}\right\}^2\\
&\displaystyle + \frac{4c_0}{N} \sum_{i=1}^NK_h(X_i-x)\left\{\frac{W(\alpha_0(X_i),Y_i)^\prime(r_0(X_i)-r_{*x}(X_i))}{1+W(\alpha_0(X_i),Y_i)^\prime r_0(X_i)}\right\}^2 \\
&\displaystyle \geq\frac{c_0}{N} \sum_{i=1}^NK_h(X_i-x)\left\{\frac{W^{FO}(\alpha_{*x}(X_i),Y_i)^\prime(\hat{r}_{\hat a, \hat b}(X_i,x)-r_{*x}(X_i))}{1+W^{FO}(\alpha_{*x}(X_i),Y_i)^\prime r_0(X_i)}\right\}^2.\ \mbox{Next, define }
\end{array}$$
$$\begin{array}{rl}
\widetilde{\mathcal{R}}(x,\hat{\alpha})&\displaystyle:= \mathcal{R}_x(\hat\alpha)  + 2Csdh^2\sqrt{\frac{\log(1/\delta)}{Nh^d}}+2(C')^2s^2d^2h^4\frac{c_0}{N} \sum_{i=1}^NK_h(X_i-x)\widetilde{R}_i^2\\
&\displaystyle \ +  \frac{2Csdh^2}{N}\sum_{i=1}^N K_h(X_i-x)\widetilde{R}_i+\frac{2c_0}{N} \sum_{i=1}^NK_h(X_i-x)\left\{\frac{W(\alpha_0(X_i),Y_i)^\prime(r_0(X_i)-r_{*x}(X_i))}{1+W(\alpha_0(X_i),Y_i)^\prime r_0(X_i)}\right\}^2.
\end{array}$$
The upper bound for $\frac{c_0}{N}\sum_{i=1}^NK_h(X_i-x)\left\{\frac{W^{FO}(\alpha_{*x}(X_i),Y_i)^\prime(\hat{r}_{\hat a,\hat b}(X_i,x)-r_0(X_i))}{1+W^{FO}(\alpha_{*x}(X_i),Y_i)^\prime r_0(X_i)}\right\}^2$ then further implies that $\frac{c_0}{2N} \sum_{i=1}^NK_h(X_i-x)\left\{\frac{W^{FO}(\alpha_{*x}(X_i),Y_i)^\prime(\hat{r}_{\hat a, \hat b}(X_i,x)-r_{*x}(X_i))}{1+W^{FO}(\alpha_{*x}(X_i),Y_i)^\prime r_0(X_i)}\right\}^2\leq \lambda_x\|r_{a^*,b^*}\|_{w_x,h,1} -  \lambda_x\|\hat r_{\hat a,\hat b}\|_{w_x,h,1} + \frac{\lambda_x}{c}\|\hat r_{\hat a,\hat b} - r_{a^*,b^*}\|_{w_x,h,1}   + \widetilde{\mathcal{R}}(x,\hat{\alpha})$. Define $\displaystyle D_{i*}^{\mathrm{FO}}:=\left\{\frac{W^{FO}(\alpha_{*x}(X_i),Y_i)^\prime(\hat{r}_{\hat a, \hat b}(X_i,x)-r_{*x}(X_i))}{1+W^{FO}(\alpha_{*x}(X_i),Y_i)^\prime r_0(X_i)}\right\}^2$. Let $T_x$ denote the support of $r_{a^*,b^*}$. Using the fact that $\|r_{a^*,b^*}\|_{w_x,h,1} - \|(\hat r_{\hat a,\hat b})_{T_x}\|_{w_x,h,1}\leq\|r_{a^*,b^*}-(\hat r_{\hat a,\hat b})_{T_x}\|_{w_x,h,1}$, 
\begin{equation}\label{ineq:*:ineq}
\begin{array}{rl}
&\displaystyle\quad\frac{c_0}{2N} \sum_{i=1}^NK_h(X_i-x)D_{i*}^{\mathrm{FO}}-\widetilde{\mathcal{R}}(x,\hat{\alpha})\\ 
&\displaystyle \leq \lambda_x(1+1/c)\|(\hat r_{\hat a,\hat b})_{T_x} - r_{a^*,b^*}\|_{w_x,h,1}\\
&\quad\displaystyle-\lambda_x(1-1/c)\|(\hat r_{\hat a,\hat b})_{T_x^c}\|_{w_x,h,1}
\end{array}
\end{equation}
Let $\mathcal{W}_x:={\rm diag}(w_x)$. Define $U_x,\hat{U}_x$ as follows: 
$$\begin{array}{rcl}
U_x&:=&\displaystyle\frac{c_0}{2N}\sum_{i=1}^N\displaystyle\frac{K_h(X_i-x)\left( {W(\alpha_0(X_i),Y_i)}\atop{ {\rm vec}( W(\alpha_0(X_i),Y_i) (X_i-x)'/h)}\right)\left( {W(\alpha_0(X_i),Y_i)}\atop{ {\rm vec}( W(\alpha_0(X_i),Y_i) (X_i-x)'/h)}\right)'}{(1+W(\alpha_0(X_i),Y_i)^\prime r_0(X_i))^2},\\
\hat{U}_x&:=&\displaystyle\frac{c_0}{2N}\sum_{i=1}^N\frac{K_h(X_i-x)\left( {W^{FO}(\alpha_{*x}(X_i),Y_i)}\atop{ {\rm vec}( W^{FO}(\alpha_{*x}(X_i),Y_i) (X_i-x)'/h)}\right)\left( {W^{FO}(\alpha_{*x}(X_i),Y_i)}\atop{ {\rm vec}( W^{FO}(\alpha_{*x}(X_i),Y_i) (X_i-x)'/h)}\right)'}{(1+W^{FO}(\alpha_{*x}(X_i),Y_i)^\prime r_0(X_i))^2}.
\end{array}$$ 
Let $\hat v_x = [ (\hat a)', h{\rm vec}(\hat b)' ]' - [ (a^{*})', h{\rm vec}(b^*)']'$. By H\"older's inequality, 
\begin{equation}\label{eq:holder}
\begin{array}{rl}
&\quad\displaystyle\hat v_x'\hat U_x \hat v_x  - \hat v_x' U_x \hat v_x\\
&\displaystyle\geq \! - \|\hat r_{\hat a,\hat b} - r_{a^*,b^*}\|_{w_x,h,1}^2 \! \|\mathcal{W}_x^{-1}\!(U_x-\hat U_x)\!\mathcal{W}_x^{-1}\|_{\infty,\infty}
\end{array}
\end{equation}
We consider two exclusive cases in the following: $\|(\hat r_{\hat a,\hat b})_{T_x^c}\|_{w_x,h,1} \geq 2\frac{c+1}{c-1}\|(\hat r_{\hat a,\hat b})_{T_x} - r_{a^*,b^*}\|_{w_x,h,1}$ and $\|(\hat r_{\hat a,\hat b})_{T_x^c}\|_{w_x,h,1} < 2\frac{c+1}{c-1}\|(\hat r_{\hat a,\hat b})_{T_x} - r_{a^*,b^*}\|_{w_x,h,1}$. 

Firstly suppose \textbf{Case 1}: $\|(\hat r_{\hat a,\hat b})_{T_x^c}\|_{w_x,h,1} \geq 2\frac{c+1}{c-1}\|(\hat r_{\hat a,\hat b})_{T_x} - r_{a^*,b^*}\|_{w_x,h,1}$.
then \eqref{ineq:*:ineq} implies 
$$\frac{c_0}{2N}\sum_{i=1}^NK_h(X_i-x)D_{i*}^{\mathrm{FO}} + \frac{1}{2}\lambda_x\left(1-\frac{1}{c}\right)\|(\hat r_{\hat a,\hat b})_{T^c_x}\|_{w_x,h,1}\leq\widetilde{\mathcal{R}}(x,\hat\alpha),$$ 
which immediately implies $\lambda_x\left(1-\frac{1}{c}\right)\|(\hat r_{\hat a,\hat b})_{T^c_x}\|_{w_x,h,1}\leq2\widetilde{\mathcal{R}}(x,\hat\alpha)$. Then 
$$\begin{array}{rl}
&\quad\displaystyle\hat v_x'\hat U_x \hat v_x  - \hat v_x' U_x \hat v_x \geq  - \|\hat r_{\hat a,\hat b} - r_{a^*,b^*}\|_{w_x,h,1}^2  \|\mathcal{W}_x^{-1}(U_x-\hat U_x)\mathcal{W}_x^{-1}\|_{\infty,\infty}\\
&\displaystyle\geq  - \left(1+\frac{(c-1)}{2(c+1)}\right)^2\| (\hat r_{\hat a, \hat b})_{T_x^c}\|_{w_x,h,1}^2 \|\mathcal{W}_x^{-1}(U_x-\hat U_x)\mathcal{W}_x^{-1}\|_{\infty,\infty}\\
&\displaystyle\geq-\left(1+\frac{(c-1)}{2(c+1)}\right)^2\ \frac{4\widetilde{\mathcal{R}}^2(x,\hat\alpha)}{(1-\frac{1}{c})^2\lambda^2_x}  \|\mathcal{W}_x^{-1}(U_x-\hat U_x)\mathcal{W}_x^{-1}\|_{\infty,\infty}\\
&\displaystyle\geq-\frac{9\widetilde{\mathcal{R}}^2(x,\hat\alpha)}{(1-1/c)^2\lambda_x^2}\|\mathcal{W}_x^{-1}(U_x-\hat U_x)\mathcal{W}_x^{-1}\|_{\infty,\infty}.
\end{array}$$
Rearranging the terms implies $\hat v_x' U_x \hat v_x \leq  \frac{9\widetilde{\mathcal{R}}^2(x,\hat\alpha)}{(1-1/c)^2\lambda_x^2} \|\mathcal{W}_x^{-1}(\hat U_x - U_x)\mathcal{W}_x^{-1} \|_{\infty,\infty} + \hat v_x'\hat U_x \hat v_x$, which implies $\hat v_x' U_x \hat v_x\leq\frac{9\widetilde{\mathcal{R}}^2(x,\hat\alpha)}{(1-1/c)^2\lambda_x^2} \|\mathcal{W}_x^{-1}(\hat U_x - U_x)\mathcal{W}_x^{-1} \|_{\infty,\infty} +\widetilde{\mathcal{R}}(x,\hat\alpha)$. This inequality holds since \eqref{ineq:*:ineq} implies that $\frac{c_0}{2N}\sum_{i=1}^NK_h(X_i-x)D_{i*}^{\mathrm{FO}} + \frac{1}{2}\lambda_x\left(1-\frac{1}{c}\right)\|(\hat r_{\hat a,\hat b})_{T^c_x}\|_{w_x,h,1}\leq\widetilde{\mathcal{R}}(x,\hat\alpha)$, which immediately implies $\hat v_x'\hat U_x \hat v_x=\frac{c_0}{2N}\sum_{i=1}^NK_h(X_i-x)D_{i*}^{\mathrm{FO}}\leq\widetilde{\mathcal{R}}(x,\hat\alpha)$. Thus in Case 1 we have $\sqrt{\hat v_x' U_x \hat v_x}\leq\frac{3\widetilde{\mathcal{R}}(x,\hat\alpha)}{(1-1/c)\lambda_x} \|\mathcal{W}_x^{-1}(\hat U_x - U_x)\mathcal{W}_x^{-1} \|_{\infty,\infty}^{1/2}+\widetilde{\mathcal{R}}(x,\hat\alpha)^{1/2}$. By Taylor expansion, $\|\mathcal{W}_x^{-1}(\hat U_x - U_x)\mathcal{W}_x^{-1} \|_{\infty,\infty}\leq C_1(\max_{i\in[N]}\|\hat{\alpha}(X_i)-\alpha_0(X_i)\|^2+h^2)$ for some universal constant $C_1$ from the conditions (vi) -- (viii). 
Condition (x) further implies that $\|\mathcal{W}_x^{-1}(\hat U_x - U_x)\mathcal{W}_x^{-1} \|_{\infty,\infty}^{1/2}$ is negligible for $N$ sufficiently large. Hence for some universal constant $C_2$, under case 1 we have 
\begin{equation}\label{eq:case1:covariate}
\sqrt{\hat v_x' U_x \hat v_x}\leq C_2\widetilde{\mathcal{R}}(x,\hat\alpha)^{1/2}.
\end{equation}
Next suppose \textbf{Case 2} holds: $\displaystyle\|(\hat r_{\hat a, \hat b})_{T_x^c}\|_{w_x,h,1} \leq 2\frac{c+1}{c-1}\|(\hat r_{\hat a,\hat b} - r_{a^*,b^*})_{T_x}\|_{w_x,h,1}$. Define $$\displaystyle\hat{\Delta}_{ab}^i:=\left\{\frac{W(\alpha_0(X_i),Y_i)^\prime(\hat{r}_{\hat a,\hat b}(X_i,x)-r_{*x}(X_i))}{1+W(\alpha_0(X_i),Y_i)^\prime r_0(X_i)}\right\}^2.$$ 
Then we have
\begin{equation}\label{eq:thm1:main:main:well:covariate}
\begin{array}{rl}
&\displaystyle\quad \hat v_x' U_x \hat v_x=\frac{c_0}{2N} \sum_{i=1}^NK_h(X_i-x)\hat{\Delta}_{ab}^i\\
&\displaystyle\leq_{(1)}\hat v_x' \hat{U}_x \hat v_x+\|\hat r_{\hat a,\hat b} - r_{a^*,b^*}\|_{w_x,h,1}^2\! \|\mathcal{W}_x^{-1}(U_x-\hat U_x)\mathcal{W}_x^{-1}\|_{\infty,\infty}\\
&\displaystyle \leq_{(2)} \lambda_x(1+\frac{1}{c})\|(\hat r_{\hat a,\hat b} - r_{a^*,b^*})_{T_x}\|_{w_x,h,1} + \|\hat r_{\hat a,\hat b} - r_{a^*,b^*}\|_{w_x,h,1}^2\! \|\mathcal{W}_x^{-1}(U_x-\hat U_x)\mathcal{W}_x^{-1}\|_{\infty,\infty}+ \widetilde{\mathcal{R}}(x,\hat\alpha)\\
&\displaystyle\leq_{(3)}\lambda_x(1+\frac{1}{c})\|(\hat r_{\hat a,\hat b} - r_{a^*,b^*})_{T_x}\|_{w_x,h,1}\\
&\displaystyle\quad+2\left[4(\frac{c+1}{c-1})^2+1\right]\!\!\|(\hat{r}_{\hat{a},\hat{b}}-r_{a^*,b^*})_{T_x}\|_{w_x,h,1}^2\|\mathcal{W}_x^{-1}(U_x-\hat U_x)\mathcal{W}_x^{-1}\|_{\infty,\infty}+ \widetilde{\mathcal{R}}(x,\hat\alpha)\\
&\displaystyle\leq_{(4)}\frac{\lambda_x}{\sqrt{c_0}}\left(1+\frac{1}{c}\right) \frac{\sqrt{s}}{\kappa_{x,\bar{c}}}\sqrt{\frac{c_0}{N} \sum_{i=1}^NK_h(X_i-x)\hat{\Delta}_{ab}^i} + \widetilde{\mathcal{R}}(x,\hat\alpha)\\
&\displaystyle\quad+\frac{s\eta_0\|\!\mathcal{W}_x^{-1}\!(U_x-\hat U_x)\!\mathcal{W}_x^{-1}\|_{\infty,\infty}}{\kappa_{x,\bar{c}}^2}\sum_{i=1}^N\frac{K_h(X_i-x)\hat{\Delta}_{ab}^i}{N}
\end{array}
\end{equation}
where (1) of \eqref{eq:thm1:main:main:well:covariate} follows from \eqref{eq:holder}, (2) of \eqref{eq:thm1:main:main:well:covariate} follows from \eqref{ineq:*:ineq} and that $\hat{v}_x\hat{U}_x\hat{v}_x=\frac{c_0}{2N} \sum_{i=1}^NK_h(X_i-x)D_{i*}^{\mathrm{FO}}$, (3) follows since 
$$\begin{array}{rl}
\|\hat{r}_{\hat{a},\hat{b}}-r_{a^*,b^*}\|_{w_x,h,1}^2&\displaystyle\leq2\|(\hat{r}_{\hat{a},\hat{b}}-r_{a^*,b^*})_{T_x}\|_{w_x,h,1}^2+2\|(\hat{r}_{\hat{a},\hat{b}})_{T_x^c}\|_{w_x,h,1}^2\\
&\displaystyle\leq2\left[4(\frac{c+1}{c-1})^2+1\right]\|(\hat{r}_{\hat{a},\hat{b}}-r_{a^*,b^*})_{T_x}\|_{w_x,h,1}^2,
\end{array}$$
(4) follows from the definition of restricted eigenvalue and the fact that 
$$\|(\hat{r}_{\hat{a},\hat{b}}-r_{a^*,b^*})_{T_x}\|_{w_x,h,1}\leq\sqrt{s}\|(\hat{r}_{\hat{a},\hat{b}}-r_{a^*,b^*})_{T_x}\|_{w_x,h,2},$$ 
with $\eta_0=2\left[4\left(\frac{c+1}{c-1}\right)^2+1\right]$.
Note that by condition (x), $$\frac{s\eta_0\|\mathcal{W}_x^{-1}(U_x-\hat U_x)\mathcal{W}_x^{-1}\|_{\infty,\infty}}{\kappa_{x,\bar{c}}^2} \leq c_0/4$$ 
when $N$ is sufficiently large. Further note that $\frac{1}{4}t^2 \leq \frac{\lambda_x}{\sqrt{c_0}}(1+\frac{1}{c})\frac{\sqrt{s}}{\kappa_{x,\bar{c}}} t + \mathcal{\widetilde{R}}(x,\hat \alpha) \Rightarrow t \leq\frac{4}{\sqrt{c_0}}\lambda_x(1+\frac{1}{c})\frac{\sqrt{s}}{\kappa_{x,\bar{c}}} + 2{\mathcal{\widetilde{R}}}^{1/2}(x,\hat \alpha)$. Let $t=\sqrt{2}\sqrt{\hat{v}_x'U_x\hat{v}_x}$, then we have $\sqrt{\hat{v}_x'U_x\hat{v}_x}\leq\frac{2\sqrt{2}}{\sqrt{c_0}}\lambda_x(1+\frac{1}{c})\frac{\sqrt{s}}{\kappa_{x,\bar{c}}} + \sqrt{2}{\mathcal{\widetilde{R}}}^{1/2}(x,\hat \alpha)$ in Case 2. Recall that \eqref{eq:case1:covariate} holds in Case 1. Thus the result holds from the definition of $\widetilde{\mathcal{R}}(x,\hat{\alpha})$ and the second statement of condition (i). 
\end{proof}

Recall from Section~\ref{Sec:Rates:covariates} that $\beta_{N,x}:={\displaystyle\max_{j\in[p],k\in[d]}}\left\{\mathcal{B}_{1,xj},\mathcal{B}_{2,xjk}\right\}$, where for any $j\in[p], k\in[d]$, $\mathcal{B}_{1,xj}:=\frac{1}{w_{xj}}\frac{1}{N}\sum_{i=1}^N\frac{K_h(X_i-x)|W_j(\alpha_0(X_i),Y_i)|}{1+W(\alpha_0(X_i),Y_i)^\prime r_0(X_i)}$, $\mathcal{B}_{2,xjk}:=\frac{1}{w_{xjk}}\frac{1}{N}\sum_{i=1}^N\frac{K_h(X_i-x)|W_j(\alpha_0(X_i),Y_i)(X_{ik}-x_k)/h|}{1+W(\alpha_0(X_i),Y_i)^\prime r_0(X_i)}$. 
\begin{lemma}\label{lemma:choice of lambda:covariate}
Under Assumptions \ref{ass:iid-min-den}, \ref{ass:moment-assumptions}, \ref{ass:smoothness-assumptions}, \ref{ass:local-linear}, 
$$\begin{array}{rl}
&\displaystyle\quad\left\|\mathrm{diag}^{-1}(w_x)\frac{1}{N}\sum_{i=1}^N\frac{  K_h(X_i-x) }{1+W(\hat{\alpha}(X_i),Y_i)^\prime r_0(X_i)} \left( { W(\hat{\alpha}(X_i),Y_i)}\atop{ {\rm vec}( W(\hat{\alpha}(X_i),Y_i)\frac{(X_i-x)'}{h} )}\right)^\prime \right\|_\infty \\
&\displaystyle\leq\left\|\mathrm{diag}^{-1}(w_x)\frac{1}{N}\sum_{i=1}^N\frac{  K_h(X_i-x) }{1+W(\alpha_0(X_i),Y_i)^\prime r_0(X_i)} \left( { W(\alpha_0(X_i),Y_i)}\atop{ {\rm vec}( W(\alpha_0(X_i),Y_i)\frac{(X_i-x)'}{h} )}\right)^\prime \right\|_\infty\\
&\displaystyle\quad+C(1+\beta_{N,x})\max_{\|X_i-x\|\leq h}\norm{\hat{\alpha}(X_i)-\alpha_0(X_i)}\frac{M^{1/2}K_{\alpha,1}}{\xi},
\end{array}$$
$$\begin{array}{rl}
&\displaystyle\left\|\mathrm{diag}^{-1}(w_x)\frac{1}{N}\sum_{i=1}^N\frac{  K_h(X_i-x) }{1+W^{FO}(\alpha_{*x}(X_i),Y_i)^\prime r_0(X_i)} \left( { W^{FO}(\alpha_{*x}(X_i),Y_i)}\atop{ {\rm vec}( W^{FO}(\alpha_{*x}(X_i),Y_i)\frac{(X_i-x)'}{h} )}\right)^\prime \right\|_\infty \\
&\displaystyle\leq\left\|\mathrm{diag}^{-1}(w_x)\frac{1}{N}\sum_{i=1}^N\frac{ K_h(X_i-x) }{1+W(\alpha_0(X_i),Y_i)^\prime r_0(X_i)} \left( { W(\alpha_{*x}(X_i),Y_i)}\atop{ {\rm vec}( W(\alpha_{*x}(X_i),Y_i)\frac{(X_i-x)'}{h} )}\right)^\prime \right\|_\infty\\
&\displaystyle\quad+C(1+\beta_{N,x})\frac{K_{\alpha,2}}{\xi}\max_{\alpha \in \mathcal{A}_x(\hat\alpha), \|X_i-x\|\leq h}\norm{\alpha(X_i)-\alpha_0(X_i)}^2+Ch^2.
\end{array}$$
Moreover, with probability $1-C\delta_{\alpha}-\delta(1+\mathrm{o}(1))$ for some absolute constant $C$,
$$\begin{array}{rl}
&\displaystyle\quad\left\|\mathrm{diag}^{-1}(w_x)\frac{1}{N}\sum_{i=1}^N\frac{  K_h(X_i-x) }{1+W(\alpha_0(X_i),Y_i)^\prime r_0(X_i)} \left( { W(\alpha_0(X_i),Y_i)}\atop{ {\rm vec}( W(\alpha_0(X_i),Y_i)\frac{(X_i-x)'}{h} )}\right)^\prime \right\|_\infty \\
&\displaystyle\leq\frac{\Phi^{-1}(1-\delta/2p(1+d))}{\sqrt{Nh^d}}\max_{j\in[p],k\in\{0\}\cup[d]}\frac{1}{w_{xjk}}\sqrt{\frac{1}{N}\sum_{i=1}^N\frac{K_h(X_i-x)W_j(\alpha_0(X_i),Y_i)^2}{(1+W(\alpha_0(X_i),Y_i)^\prime  r_0(X_i))^2}}.
\end{array}$$
\end{lemma}
\begin{proof}[Proof of Lemma \ref{lemma:choice of lambda:covariate}]
The proof is similar to the no-covariate case, i.e. Lemma \ref{lemma:Score:nocovariate}. 
By Taylor expansion, for each $j\in[p],i\in[N]$, there exists $\tilde{\alpha}^{ji}$, $\tilde{\alpha}$ in the line segment between $\hat{\alpha}(X_i)$ and $\alpha_0(X_i)$, such that 
$W_j(\hat{\alpha}(X_i),Y_i)=W_j(\alpha_0(X_i),Y_i)+(\hat{\alpha}(X_i)-\alpha_0(X_i))^\prime\nabla_{\alpha}W_j(\tilde{\alpha}^{ji},Y_i)$. Thus 
$$
\begin{array}{rl}
&\displaystyle\quad\frac{W_j(\hat\alpha(X_i),Y_i)}{1+W(\hat\alpha(X_i),Y_i)'r_0(X_i)}\\
&\displaystyle= \frac{W_j(\alpha_0(X_i),Y_i) +   (\hat\alpha(X_i)-\alpha_0(X_i))'\nabla_\alpha W_j(\tilde \alpha^{ji},Y_i)}{1+W(\alpha_0(X_i),Y_i)'r_0(X_i) + (\hat\alpha(X_i)-\alpha_0(X_i))'\nabla_\alpha W(\tilde{\alpha},Y_i)'r_0(X_i)} \\
&\displaystyle= \frac{W_j(\alpha_0(X_i),Y_i) }{1+W(\alpha_0(X_i),Y_i)'r_0(X_i)}\!\!\left(\!1\! - \!\frac{(\hat\alpha(X_i)-\alpha_0(X_i))'\nabla_\alpha W(\tilde \alpha,Y_i)'r_0(X_i)}{1+W(\alpha_0(X_i),Y_i)'r_0(X_i) + (\hat\alpha(X_i)-\alpha_0(X_i))'\nabla_\alpha W(\tilde{\alpha},Y_i)'r_0(X_i)}\!\right)\\
&\displaystyle\quad+\frac{(\hat\alpha(X_i)-\alpha_0(X_i))'\nabla_\alpha W_j(\tilde \alpha^{ji},Y_i)}{1+W(\alpha_0(X_i),Y_i)'r_0(X_i) + (\hat\alpha(X_i)-\alpha_0(X_i))'\nabla_\alpha W(\tilde{\alpha},Y_i)'r_0(X_i)}\\
&\displaystyle = \frac{W_j(\alpha_0(X_i),Y_i) }{1+W(\alpha_0(X_i),Y_i)'r_0(X_i)} + \widetilde R(\hat\alpha(X_i),Y_i),
\end{array}$$ 
where $\displaystyle|\widetilde R(\hat\alpha(X_i),Y_i)|\leq C\left(1+\frac{|W_j(\alpha_0(X_i),Y_i)|}{1+W(\alpha_0(X_i),Y_i)^\prime r_0(X_i)}\right)\frac{\norm{\hat{\alpha}(X_i)-\alpha_0(X_i)}M^{1/2}K_{\alpha,1}}{\xi}$ with probability $1-\delta_{\alpha}$ from Assumption~\ref{ass:smoothness-assumptions}. Note that $K_h(X_i-x)>0$ if and only if $\norm{(X_i-x)/h}\leq1$. So the first inequality holds. Now we prove the second inequality. By Taylor expansion, for each $j\in[p],i\in[N]$, there exist $\tilde{\alpha}_{ji},\tilde{\alpha}(X_i)$ on the line segment between $\hat{\alpha}(X_i)$ and $\alpha_0(X_i)$ such that 
$$\begin{array}{rl}
&\quad\displaystyle W^{FO}_j(\alpha_{*x}(X_i),Y_i)= W_j(\hat \alpha(X_i),Y_i) + (\alpha_{*x}(X_i)-\hat\alpha(X_i))'\nabla_\alpha W_j(\hat\alpha(X_i),Y_i)\\
&\displaystyle= W_j(\hat \alpha(X_i),Y_i) + (\alpha_0(X_i)-\hat\alpha(X_i))'\nabla_\alpha W_j(\hat\alpha(X_i),Y_i)+(\alpha_{*x}(X_i)-\alpha_0(X_i))'\nabla_\alpha W_j(\hat\alpha(X_i),Y_i)\\
&\displaystyle = W_j(\alpha_0(X_i),Y_i) - \frac{1}{2}(\alpha_0(X_i)-\hat\alpha(X_i))'\nabla_\alpha^2 W_j(\tilde\alpha_{ji},Y_i)(\alpha_0(X_i)-\hat\alpha(X_i))\\
&\quad\displaystyle +(\alpha_{*x}(X_i)-\alpha_0(X_i))'\nabla_\alpha W_j(\hat\alpha(X_i),Y_i),\ \mbox{and}
\end{array}$$
$\frac{W^{FO}_j(\alpha_{*x}(X_i),Y_i) }{1+W^{FO}(\alpha_{*x}(X_i),Y_i)'r_0(X_i)} =  \frac{W_j(\alpha_0(X_i),Y_i)}{1+W(\alpha_0(X_i),Y_i)'r_0(X_i)} + \widetilde R(\hat\alpha(X_i),\alpha_0(X_i),Y_i)$, where $|\widetilde R(\hat{\alpha}(X_i),\alpha_0(X_i),Y_i)|\leq C\left(1+\frac{|W_j(\alpha_0(X_i),Y_i)| }{1+W(\alpha_0(X_i),Y_i)'r_0(X_i)}\right) \max_{\alpha \in \mathcal{A}_x(\hat\alpha), \|X_i-x\|\leq h}\norm{\alpha(X_i)-\alpha_0(X_i)}^2+Ch^2$ with probability $1-\delta_{\alpha}$ from Assumption~\ref{ass:smoothness-assumptions}, for some universal constant $C$. 

Next, define 
$\tilde X_{ij} := \left(\frac{K_h(X_i-x)W_{j}(\alpha_0(X_i),Y_i)}{1+W(\alpha_0(X_i),Y_i)'r_0(X_i)},\frac{K_h(X_i-x) W_j(\alpha_0(X_i),Y_i)(X_i-x)'/h}{1+W(\alpha_0(X_i),Y_i)'r_0(X_i)}\right)\in\mathbb{R}^{d+1}$, where we denote $\tilde{X}_{ij0}:=\frac{K_h(X_i-x)W_{j}(\alpha_0(X_i),Y_i)}{1+W(\alpha_0(X_i),Y_i)'r_0(X_i)}$, and $\tilde{X}_{ijl}:=\frac{K_h(X_i-x) W_j(\alpha_0(X_i),Y_i)(X_{il}-x_{l})/h}{1+W(\alpha_0(X_i),Y_i)'r_0(X_i)}$ for any $l\in[d]$. So under Assumptions \ref{ass:iid-min-den} and \ref{ass:moment-assumptions}, we have 
$$\begin{array}{rl}
&\displaystyle\quad\min_{j\in[p],l\in\{0\}\cup[d]}\mathbb{E}[\tilde X_{ijl}^2 ]\\
&\displaystyle\geq_{(1)} \min_{j\in[p],k\in[d]}\mathbb{E}\Bigg[\frac{K(\norm{X_i-x}/h)^2\mathbf{1}\{\norm{X_i-x}\leq h\}}{h^{2d}}\frac{(X_{ik}-x_k)^2}{h^2}\left(\frac{W_j(\alpha_0(X_i),Y_i)}{1+W(\alpha_0(X_i),Y_i)'r_0(X_i)}\right)^2\Bigg]\\
&\displaystyle\geq_{(2)}\underline{C}_1\min_{j\in[p],k\in[d]}\mathbb{E}\left[\frac{\mathbf{1}\{\norm{X_i-x}\leq h\}}{h^{2d}}\frac{(X_{ik}-x_k)^2}{h^2}\left(\frac{W_j(\alpha_0(X_i),Y_i)}{1+W(\alpha_0(X_i),Y_i)'r_0(X_i)}\right)^2\right]\\
&\displaystyle\geq_{(3)}\underline{C}_1\bar\psi_2^2\int_{\norm{u-x}\leq h}\frac{(u_k-x_k)^2}{h^{2d}h^2}\rho(u)du\geq_{(4)}\underline{C}\bar\psi_2^2/h^d,
\end{array}$$
where (1) follows because $|X_{ik}-x_k|\leq h$, $K_h(X_i-x)>0\iff\norm{X_i-x}\leq h$ and $K_h(X_i-x)=\frac{1}{h^d}K(\norm{X_i-x}/h)$. (2) follows because $K(\norm{X_i-x}/h)$ is bounded away from zero on its support, (3) follows by definition of $\bar{\psi}_2$ in Assumption \ref{ass:moment-assumptions}, and $\rho(\cdot)$ is the density of covariate $X$, and (4) holds because the volume of the Euclidean ball $\{u:\norm{u-x}\leq h\}$ is equal to $Ch^d$, where $C$ is some constant independent of $h,d$, and that $\rho(\cdot)\geq\rho_0>0$ on its support by Assumption \ref{ass:local-linear}. Also note that  
$$\begin{array}{rl}
&\displaystyle\quad\max_{j\in[p],k\in\{0\}\cup[d]}\left(N^{-1}\sum_{i=1}^N\mathbb{E}[|\tilde X_{ijk}|^3 ]\right)^{1/3}\\ 
&\displaystyle\leq\max_{j\in[p]}\left(N^{-1}\sum_{i=1}^N\mathbb{E}\left[\frac{K(\norm{X_i-x}/h)^3\mathbf{1}\{\norm{X_i-x}\leq h\}}{h^{3d}}\left|\frac{W_j(\alpha_0(X_i),Y_i)}{1+W(\alpha_0(X_i),Y_i)'r_0(X_i)}\right|^3\right]\right)^{1/3}\\
&\displaystyle\leq\bar{C}\bar{K}_3h^{-2d/3},
\end{array}$$
where $\underline{C}$, $\bar{C}$ are absolute constants. So 
$$\begin{array}{rl}
&\displaystyle\mathbb{P} \left(\left| \frac{1}{N}\sum_{i=1}^N \frac{\tilde X_{ijk}}{w_{xjk}} \right| > \frac{\Phi^{-1}(1-\delta/2p(d+1))}{w_{xjk} N^{1/2}} \sqrt{\frac{1}{N}\sum_{i=1}^N \tilde X_{ijk}^2}, \mbox{for some } j\in [p], k\in\{0\}\cup[d] \right)  \\
&\displaystyle \leq  \sum_{j\in [p],k\in\{0\}\cup[d]} \mathbb{P} \left( \left|\frac{1}{N}\sum_{i=1}^N\frac{\tilde X_{ijk}}{w_{xjk}} \right| > \frac{\Phi^{-1}(1-\delta/2p(d+1))}{w_{xjk}N^{1/2}} \sqrt{\frac{1}{N}\sum_{i=1}^N \tilde X_{ijk}^2 } \right)\\
&\displaystyle\leq\sum_{j\in [p],k\in\{0\}\cup[d]}\!\!\!\!\!\!\!\!\!\!\!\!\left[1+A(\bar{K}_3/\bar{\psi}_2)^3N^{-1/2}h^{-d/2}(1+\Phi^{-1}(1-\delta/2p(d+1)))^{3}\right]\{\delta/2p(d+1)\}\\
&\displaystyle\leq \delta \left(1+\mathrm{o}(1)\right),
\end{array}$$
where the first inequality above follows from the union bound, the second inequality follows from Theorem 2.3 in \citet{jing2003self}, where $A$ is an absolute constant, and the last inequality follows since $p=2^M-M-1$ and $\sqrt{\frac{M^3}{Nh^d}}\leq\delta_N$ from Assumption~\ref{ass:moment-assumptions}, so $(1+\Phi^{-1}(1-\delta/(2p(d+1)))^3/\sqrt{Nh^d}\leq C_1\sqrt{\frac{\log^3(p(d+1))}{Nh^d}}\leq C_1\delta_N$ for some absolute constant $C_1$.
Further, for any $j\in[p],k\in\{0\}\cup[d]$, for some absolute constant $C$,
$$\begin{array}{rl}
\displaystyle\sqrt{\frac{1}{N}\sum_{i=1}^N \tilde X_{ijk}^2}&\displaystyle\leq\sqrt{\frac{1}{N}\sum_{i=1}^NK_h(X_i-x)^2\left(\frac{W_j(\alpha_0(X_i),Y_i)}{1+W(\alpha_0(X_i),Y_i)'r_0(X_i)}\right)^2}\\
&\displaystyle=\sqrt{\frac{1}{Nh^{d}}\sum_{i=1}^NK(\norm{X_i-x}/h)K_h(X_i-x)\left(\frac{W_j(\alpha_0(X_i),Y_i)}{1+W(\alpha_0(X_i),Y_i)'r_0(X_i)}\right)^2}\\
&\displaystyle\leq \frac{C}{\sqrt{h^d}}\sqrt{\frac{1}{N}\sum_{i=1}^NK_h(X_i-x)\left(\frac{W_j(\alpha_0(X_i),Y_i)}{1+W(\alpha_0(X_i),Y_i)'r_0(X_i)}\right)^2},
\end{array}$$
Thus the result follows. 
\end{proof}

\begin{lemma}\label{lemma:covariate:score*approx}
Suppose that $Z_i(X_i), i\in [N],$ are independent random variables such that \\
$\mathbb{E}[Z_i(X_i)\mid X_i]=0$,  $\sigma^2 \geq \max_{i\in[N]}\mathbb{E}[Z_i^2(X_i) \mid X_i]$. Then for a given $x\in \mathcal{X}$ we have that with some absolute constants $\bar{C},C_1$, with probability $1-\delta-C_1(\bar{K}/\sigma)^2/(Nh^d)$, where $\bar{K}=\mathbb{E}[\max_{i\in[N]}Z_i(X_i)^2]^{1/2}$, we have $\left|\frac{1}{N}\sum_{i=1}^N K_h(X_i-x) Z_i(X_i) \right|\leq\bar{C}\sigma\sqrt{\frac{\log(1/\delta)}{Nh^d}}$. In particular, under Assumption \ref{ass:iid-min-den}, \ref{ass:moment-assumptions}, with probability $\displaystyle1-\delta-C(\bar{K}_{\max}/\bar{K}_2)^2/(Nh^d)$, for some universal constant $C$ independent of $s,N,h,M,d$, we have $\displaystyle\left|\frac{1}{N}\sum_{i=1}^N K_h(X_i-x)\frac{ W(\alpha_0(X_i),Y_i)^\prime(r_{*x}(X_i)-r_0(X_i))}{1+W(\alpha_0(X_i),Y_i)^\prime r_0(X_i)}\right| \leq C \bar{K}_2sdh^2\sqrt{\frac{\log(1/\delta)}{Nh^d}}$.
\end{lemma}
\begin{proof}[Proof of Lemma \ref{lemma:covariate:score*approx}]
Let $X_1^\prime,\ldots,X_N^\prime$ be i.i.d. copies of $X_1,\ldots,X_N$, and $Z_i(X_i)^\prime$ be i.i.d. copies of $Z_i(X_i)$ for $i\in[N]$. Let $\mathbf{X}(N):=\{X_1,\ldots,X_N\}$ and $\mathbf{Z}(N):=\{Z_1(X_1),\ldots,Z_N(X_N)\}$, define $\displaystyle W = \mathbb{E}\left[\sum_{i=1}^N\left(K_h(X_i-x)Z_i(X_i)-K_h(X_i^\prime-x)Z_i(X_i)^\prime\right)^2\big| \mathbf{X}(N),\mathbf{Z}(N)\right]$.
Denote $Z:=\sum_{i=1}^NK_h(X_i-x)Z_i(X_i)$. 
Then by Theorem 12.3 of \citet{boucheron2003concentration}, for all $t\geq0$, we have $\displaystyle\mathbb{P}\left\{\frac{|Z|}{\sqrt{W}}\geq2\sqrt{t}\right\}\leq4e^{-t/4}$. So for any $\delta>0$, for some absolute constant $\bar{C}_0$, with probability $1-\delta$, we have 
$$\begin{array}{rl}
(*)=\displaystyle\frac{\left|\sum_{i=1}^N K_h(X_i-x) Z_i(X_i) \right|}{\sqrt{\mathbb{E}\left[\sum_{i=1}^N\left(K_h(X_i-x)Z_i(X_i)-K_h(X_i^\prime-x)Z_i(X_i)^\prime\right)^2\big| \mathbf{X}(N),\mathbf{Z}(N)\right]}}\leq \bar{C}_0\sqrt{\log(1/\delta)}.
\end{array}$$
$$\begin{array}{rl}
&\displaystyle\quad\mathbb{E}\left[\sum_{i=1}^N\left(K_h(X_i-x)Z_i(X_i)-K_h(X_i^\prime-x)Z_i(X_i)^\prime\right)^2\big| X_1,\ldots,X_N\right]\\
&\displaystyle\leq\mathbb{E}\left[2\sum_{i=1}^N\left(K_h(X_i-x)Z_i(X_i)\right)^2+2\sum_{i=1}^N\left(K_h(X_i^\prime-x)Z_i(X_i)^\prime\right)^2\big| X_1,\ldots,X_N\right]\\
&\displaystyle\leq_{(1)}2\sum_{i=1}^N\mathbb{E}\left[\left(K_h(X_i'-x)Z_i(X_i)'\right)^2\right]+2\sigma^2\sum_{i=1}^N\mathbb{E}\left[\frac{K((X_i-x)/h)^2\mathbf{1}\{\norm{X_i-x}\leq h\}}{h^{2d}}\big|X_i\right]\\
&\displaystyle\leq_{(2)}2\sigma^2\sum_{i=1}^N\left\{\mathbb{E}\left[\frac{K((X_i-x)/h)^2\mathbf{1}\{\|X_i-x\|\leq h\}}{h^{2d}}\right]+\mathbb{E}\left[\frac{K((X_i-x)/h)^2\mathbf{1}\{\norm{X_i-x}\leq h\}}{h^{2d}}\big|X_i\right]\right\}
\end{array}$$
where (1) follows since $X_i', Z_i(X_i)'$ are independent of $\{X_1,\ldots,X_N\}$ and $\sigma^2 \geq \max_{i\in[N]}\mathbb{E}[Z_i^2(X_i) \mid X_i]$. (2) follows since 
$$\begin{array}{rl}
\mathbb{E}\left[\left(K_h(X_i'-x)Z_i(X_i)'\right)^2\right]&\displaystyle=_{(a)}\mathbb{E}\left[\left(K_h(X_i-x)Z_i(X_i)\right)^2\right]\\
&\displaystyle=_{(b)}\mathbb{E}\{K_h(X_i-x)^2\mathbb{E}\left[Z_i(X_i)^2|X_i\right]\}\\
&\displaystyle\leq_{(c)}\sigma^2\mathbb{E}\left[\frac{K((X_i-x)/h)^2\mathbf{1}\{\|X_i-x\|\leq h\}}{h^{2d}}\right],
\end{array}$$ 
where (a) follows since $X_i', Z_i(X_i)'$ are identically distributed copies of $X_i,Z_i(X_i)$, (b) follows by tower property, and (c) follows $\sigma^2 \geq \max_{i\in[N]}\mathbb{E}[Z_i^2(X_i) \mid X_i]$. Further note that $0\leq K(\cdot)\leq\bar{C}$, and for some absolute constant $C_0$, $\mathbb{P}(\norm{X_i-x}\leq h)=\int_{B_h(x)}\rho(du)=C_0h^d$, hence
{$$\begin{array}{rl}
&\displaystyle\quad\mathbb{E}\left\{\mathbb{E}\left[\sum_{i=1}^N\left(K_h(X_i-x)Z_i(X_i)-K_h(X_i^\prime-x)Z_i(X_i)^\prime\right)^2\big| \mathbf{X}(N),\mathbf{Z}(N)\right]\right\}\\
&\displaystyle\leq\mathbb{E}\left[\frac{4\bar{C}^2\sigma^2}{h^{2d}}\sum_{i=1}^N\mathbf{1}\{\norm{X_i-x}\leq h\}\right]\leq\frac{4NC_0\bar{C}^2\sigma^2}{h^d}.
\end{array}$$}
Let $\displaystyle\bar{M}=\max_{i\in[N]}\mathbb{E}\left[K_h(X_i-x)^2Z_i(X_i)^2|X_1,\ldots,X_N,Z_1(X_1),\ldots,Z_N(X_N)\right]$ and $$\bar{M}^\prime=\max_{i\in[N]}\mathbb{E}\left[K_h(X_i^\prime-x)^2Z_i(X_i)^{\prime 2}|X_1,\ldots,X_N,Z_1(X_1),\ldots,Z_N(X_N)\right].$$ 
Then $\max_{i\in[N]}\mathbb{E}\left[(K_h(X_i-x)Z_i(X_i)-K_h(X_i^\prime-x)Z_i(X_i)^\prime)^2\big|\mathbf{X}(N),\mathbf{Z}(N)\right]\leq2(\bar{M}+\bar{M}^\prime)$.
Thus 
$$\mathbb{E}\left\{\max_{i\in[N]}\mathbb{E}\left[(K_h(X_i-x)Z_i(X_i)-K_h(X_i^\prime-x)Z_i(X_i)^\prime)^2\big|\mathbf{X}(N),\mathbf{Z}(N)\right]\right\}\leq2\mathbb{E}[\bar{M}+\bar{M}^\prime]\leq_{(a)}4\mathbb{E}[\bar{M}],$$
where (a) holds since $\mathbb{E}[\bar{M}']\leq\mathbb{E}[\bar{M}]$. Further note that $\mathbb{E}[\bar{M}]=\mathbb{E}[\max_{i\in[N]}K_h(X_i-x)^2Z_i(X_i)^2]\leq\frac{\bar{C}^2}{h^{2d}}\bar{K}^2$, where $\bar{K}=\mathbb{E}[\max_{i\in[N]}Z_i(X_i)^2]^{1/2}$. Thus using Lemma E.4 of \citet{chernozhukov2017central}, for any $t>0$, for some absolute constant $C_1$, with probability at least $\displaystyle1-C_1\frac{\bar{C}^2\bar{K}^2}{th^{2d}}$, we have 
$$\mathbb{E}\left[\sum_{i=1}^N\left(K_h(X_i-x)Z_i(X_i)-K_h(X_i^\prime-x)Z_i(X_i)^\prime\right)^2\big| \mathbf{X}(N),\mathbf{Z}(N)\right]\leq\frac{8NC_0\bar{C}^2\sigma^2}{h^d}+t,$$
indicating that with some absolute constants $\bar{C}_0$ and $\bar{C}_1$, with probability at least $\displaystyle1-\frac{C_1(\bar{K}/\sigma)^2}{Nh^d}$, $\displaystyle\mathbb{E}\left[\sum_{i=1}^N\left(K_h(X_i-x)Z_i(X_i)-K_h(X_i^\prime-x)Z_i(X_i)^\prime\right)^2\big|\mathbf{X}(N),\mathbf{Z}(N)\right]\leq\bar{C}_0\sigma^2\frac{N}{h^d}$. Thus further using the inequality for $(*)$ derived above, with some absolute constants $\bar{C},C_1$, with probability $1-\delta-\frac{C_1(\bar{K}/\sigma)^2}{Nh^d}$, $\displaystyle\left|\frac{1}{N}\sum_{i=1}^N K_h(X_i-x) Z_i(X_i) \right|\leq\bar{C}\sigma\sqrt{\frac{\log(1/\delta)}{Nh^d}}$. So the first statement holds. The second statement in the Lemma follows immediately as a corollary by letting $\displaystyle Z_i(X_i)=\frac{ W(\alpha_0(X_i),Y_i)^\prime(r_{*x}(X_i)-r_0(X_i))}{1+W(\alpha_0(X_i),Y_i)^\prime r_0(X_i)}$, so $\mathbb{E}[Z_i(X_i)|X_i]=0$. Recall that $\left\|  r_{*xj}(X_i)-r_{0j}(X_i) \right\|_1 \leq \bar{K}_{x,2}dh^2$ by component-wise Taylor expansion. Here we denote $\displaystyle\widetilde{W}_i=\frac{W(\alpha_0(X_i),Y_i)}{1+W(\alpha_0(X_i),Y_i)'r_0(X_i)}$ and let $T_i$ be the support of $r_{*x}(X_i)-r_0(X_i)$, we have $|T_i|\leq 2s$, so with some absolute constant $C$ we have $\mathbb{E}[\max_{i\in[N]}Z_i(X_i)^2]^{1/2}=\mathbb{E}\left[\max_{i\in[N]}\left(\sum_{j\in T_i}\widetilde{W}_{ij}(r_{*xj}(X_i)-r_{0j}(X_i))\right)^2\right]^{1/2}\!\!\!\!\!\!\!\!\leq C\bar{K}_{\max}sdh^2$. Also note that $\max_{i\in[N]}\mathbb{E}[Z_i^2(X_i)|X_i]\leq \max_{i\in[N]}\mathbb{E}\left[\left(\sum_{j\in T_i}\widetilde{W}_{ij}(r_{*xj}(X_i)-r_{0j}(X_i))\right)^2\big|X_i\right]\leq C\bar{K}_{2}^2s^2d^2h^4$. Since $\bar{K}=\mathbb{E}[{\displaystyle\max_{i\in[N]}}Z_i(X_i)^2]^{1/2}\leq C\bar{K}_{\max}sdh^2$, $\sigma^2\leq C\bar{K}_2^2s^2d^2h^4$. So the second statement holds.
\end{proof}

\begin{lemma}\label{lemma:MinMax:Claim1:covariate}
Suppose Assumptions~\ref{ass:iid-min-den}, \ref{ass:moment-assumptions}, \ref{ass:smoothness-assumptions} hold. Then for some universal constant $C$ independent of $N,M,s,d,h$, with probability $1-C\delta-C\epsilon-C\delta_{\alpha}$, we have 
$$\begin{array}{rl}
&\quad\displaystyle \min_{\alpha \in \mathcal{A}_x(\hat\alpha)} Q^{FO}(\alpha,r_*)-Q^{FO}(\alpha_{*x},r_*)\\
& \displaystyle 
 \geq-C\max_{\alpha \in \mathcal{A}_x(\hat\alpha), \|X_i-x\|\leq h}\|\alpha(X_i)-\alpha_0(X_i)\|_1 \sqrt{\frac{\log (M/\delta)}{Nh^d}}\\
&\quad \displaystyle {-} C\max_{\alpha \in \mathcal{A}_x(\hat\alpha), \|X_i-x\|\leq h}\!\!\|\alpha(X_i)\! - \!\alpha_0(X_i)\|^2 \!\!-Csdh^2\left(\frac{M}{\sqrt{Nh^{d}}}+Mdh^2\right),
\end{array}$$
where $\delta\geq\max\{\exp(-\bar{C}_0\{\sqrt{\log M}\sqrt{Nh^d}+Nh^d\}),\log(M)/(\delta_NNh^d),1/2^M\}$. 
\end{lemma}
\begin{proof}[Proof of Lemma~\ref{lemma:MinMax:Claim1:covariate}]
Take $\alpha(\cdot) \in \mathcal{A}_x(\hat\alpha)$, where $\alpha(X_i):=\tilde{\alpha}+\tilde{\beta}(X_i-x)$ for some $\tilde{\alpha}\in\mathbb{R}^M, \tilde{\beta}\in\mathbb{R}^{M\times d}$. Let $t=\frac{(\alpha(X_i) - \alpha_{*x}(X_i))'\nabla_\alpha W(\hat\alpha(X_i),Y_i)'r_{*x}(X_i)}{1+W^{FO}(\alpha_{*x}(X_i),Y_i)'r_{*x}(X_i)}$. Then $Q^{FO}(\alpha,r_*)-Q^{FO}(\alpha_{*x},r_*)=\frac{1}{N}\sum_{i=1}^NK_h(X_i-x) \log\left( 1 + t\right)$. By (1) applying Taylor expansions with respect to $W$, $\nabla_{\alpha}W$, (2) note that $\hat{\alpha}(X_i)\in\mathcal{A}_x(\hat{\alpha})$, (3) $\|r_{*x}(X_i)-r_0(X_i)\|_1\leq\bar{K}sdh^2$ for some universal constant $\bar{K}$, then following similar proof steps in Lemma~\ref{lemma:MinMax:Claim1}, it is straightforward to check that when $N$ is sufficiently large, 
$$\begin{array}{rl}
&\displaystyle\quad\frac{(\alpha(X_i) - \alpha_{*x}(X_i))'\nabla_\alpha W(\hat\alpha(X_i),Y_i)'r_{*x}(X_i)}{1+W^{FO}(\alpha_{*x}(X_i),Y_i)'r_{*x}(X_i)}\\
&\displaystyle=\frac{(\alpha(X_i) - \alpha_{*x}(X_i))'\nabla_\alpha W(\alpha_0(X_i),Y_i)'r_{*x}(X_i)}{1+W(\alpha_0(X_i),Y_i)'r_{*x}(X_i)} +\widetilde R(\alpha,\hat\alpha,Y_i),
\end{array}$$ 
where $\left|\frac{(\alpha(X_i) - \alpha_{*x}(X_i))'\nabla_\alpha W(\alpha_0(X_i),Y_i)'r_{*x}(X_i)}{1+W(\alpha_0(X_i),Y_i)'r_{*x}(X_i)} +\widetilde R(\alpha,\hat\alpha,Y_i)\right|\leq c_0$ for an absolute constant $c_0>0$. Furthermore note that $\log(1+t)\geq t - \frac{t^2}{1+t}$, so 
$Q^{FO}(\alpha,r_*)-Q^{FO}(\alpha_{*x},r_*)\geq \Delta q_1+\Delta q_2-\Delta q_3$, where $\Delta q_1=\frac{1}{N}\sum_{i=1}^NK_h(X_i-x)\frac{(\alpha(X_i) - \alpha_{*x}(X_i))'\nabla_\alpha W(\alpha_0(X_i),Y_i)'r_{*x}(X_i)}{1+W(\alpha_0(X_i),Y_i)'r_{*x}(X_i)}$, $\Delta q_2=\frac{1}{N}\sum_{i=1}^N K_h(X_i-x)\widetilde R(\alpha,\hat\alpha,Y_i)$, $\Delta q_3=\frac{1/c_0}{N}\sum_{i=1}^N K_h(X_i-x)\left|\frac{(\alpha(X_i) - \alpha_{*x}(X_i))'\nabla_\alpha W(\alpha_0(X_i),Y_i)'r_{*x}(X_i)}{1+W(\alpha_0(X_i),Y_i)'r_{*x}(X_i)} +\widetilde R(\alpha,\hat\alpha,Y_i))\right|^2$. In the following we focus on bounding $|\Delta q_1|$, $|\Delta q_2|$ and $|-\Delta q_3|$ respectively. Note that $\Delta q_1=\Delta q_{11}+\Delta q_{12}$, where $\displaystyle\Delta q_{11}:=\frac{1}{N}\sum_{i=1}^N K_h(X_i-x)\frac{(\alpha(X_i) - \alpha_{*x}(X_i))'\nabla_\alpha W(\alpha_0(X_i),Y_i)'r_0(X_i)}{1+W(\alpha_0(X_i),Y_i)'r_0(X_i)}$ and 
$$\begin{array}{rl}
\displaystyle\Delta q_{12}\!:=\!\frac{1}{N}\!\!\sum_{i=1}^N\!\!K_h(X_i-x)(\alpha(X_i) - \alpha_{*x}(X_i))'\!\!\left\{\!\frac{\nabla_\alpha W(\alpha_0(X_i),Y_i)'r_{*x}(X_i)}{1+W(\alpha_0(X_i),Y_i)'r_{*x}(X_i)}\! -\! \frac{ \nabla_\alpha W(\alpha_0(X_i),Y_i)'r_0(X_i)}{1+W(\alpha_0(X_i),Y_i)'r_0(X_i)}\!\right\}
\end{array}$$
So with probability $1-C\delta-C(\bar{K}_{\max}/\bar{K}_2)^2/(Nh^d)$,
$$\begin{array}{rl}
&\quad\displaystyle|\Delta q_{11}|:= \left|\frac{1}{N}\sum_{i=1}^N K_h(X_i-x)\frac{(\alpha(X_i) - \alpha_{*x}(X_i))'\nabla_\alpha W(\alpha_0(X_i),Y_i)'r_0(X_i)}{1+W(\alpha_0(X_i),Y_i)'r_0(X_i)} \right|\\
&\displaystyle\leq_{(1)}\| (\tilde{\alpha}-\alpha_0(x),h\cdot\mathrm{vec}(\tilde{\beta}-\nabla \alpha_0(x))\|_1\left\| \frac{1}{N}\sum_{i=1}^N\frac{K_h(X_i-x)\left( { \nabla_\alpha W(\alpha_{0}(X_i),Y_i)'r_0(X_i)}\atop{ {\rm vec}( \nabla_\alpha W(\alpha_{0}(X_i),Y_i)'r_0(X_i)\frac{(X_i-x)'}{h} )}\right) }{1+W(\alpha_{0}(X_i),Y_i)^\prime r_0(X_i)}\right\|_\infty \\
&\displaystyle\leq_{(2)}C'\left(\norm{\tilde{\alpha}-\alpha_0(x)}_1+h\norm{\tilde{\beta}-\nabla\alpha_0(x)}_{1,1}\right)\sqrt{\frac{\log (M/\delta)}{N{h^d}}}\leq_{(3)}C\left(\frac{Md}{\sqrt{Nh^{d}}}+Mdh^2\right)\sqrt{\frac{\log (M/\delta)}{N{h^d}}}.
\end{array}$$
Next, we show that (1) and (2) hold above. 
To show that (1) holds above, note that $\alpha(X_i)=\tilde{\alpha}+\tilde{\beta}(X_i-x)$ where $\alpha(X_i)\in\mathcal{A}_x(\hat{\alpha})$, and recall that $\alpha_{*x}(X_i)=\alpha_0(x)+\nabla\alpha_0(x)(X_i-x)$, thus 
$$\begin{array}{rl}
&\displaystyle\quad\left|\frac{1}{N}\sum_{i=1}^N K_h(X_i-x)\frac{(\alpha(X_i) - \alpha_{*x}(X_i))'\nabla_\alpha W(\alpha_0(X_i),Y_i)'r_0(X_i)}{1+W(\alpha_0(X_i),Y_i)'r_0(X_i)}\right|\\
&\displaystyle=\left|\frac{1}{N}\sum_{i=1}^N K_h(X_i-x)\frac{[(\tilde{\alpha}-\alpha_0(x))+(\tilde{\beta}-\nabla\alpha_0(x))(X_i-x)]'\nabla_\alpha W(\alpha_0(X_i),Y_i)'r_0(X_i)}{1+W(\alpha_0(X_i),Y_i)'r_0(X_i)}\right|\\
&\displaystyle\leq_{(a)}\| (\tilde{\alpha},h\mathrm{vec}(\tilde{\beta}))-(\alpha_0(x),h\mathrm{vec}(\nabla \alpha_0(x)))\|_1\\ 
&\displaystyle\quad\times\left\| \frac{1}{N}\sum_{i=1}^N\frac{  K_h(X_i-x) }{1+W(\alpha_{0}(X_i),Y_i)^\prime r_0(X_i)} \left( { \nabla_\alpha W(\alpha_{0}(X_i),Y_i)'r_0(X_i)}\atop{ {\rm vec}( \nabla_\alpha W(\alpha_{0}(X_i),Y_i)'r_0(X_i)\frac{(X_i-x)'}{h} )}\right) \right\|_\infty, 
\end{array}$$
where inequality (a) here holds by applying H\"older's inequality. Additionally, inequality (2) holds with probability $1-C\delta-C(\bar{K}_{\max}/\bar{K}_2)^2/(Nh^d)$ by the self-normalized deviation theory and Lemma \ref{lemma:covariate:score*approx}. The constant $C'$ in (2) is a universal constant independent of $N,M,s,d$. And inequality (3) holds by Proposition \ref{prop:concentration:covariate} for $\delta\geq1/2^M$. Denote $\displaystyle\Delta^{*}(X_i,Y_i,x):=\frac{ \nabla_\alpha W(\alpha_0(X_i),Y_i)'r_{*x}(X_i)}{1+W(\alpha_0(X_i),Y_i)'r_{*x}(X_i)} - \frac{ \nabla_\alpha W(\alpha_0(X_i),Y_i)'r_0(X_i)}{1+W(\alpha_0(X_i),Y_i)'r_0(X_i)}$. Then for universal constants $\bar{C},\bar{C}'$, $C$,
$$\begin{array}{rl}
&\displaystyle\quad|\Delta q_{12}|= \left|\frac{1}{N}\sum_{i=1}^N K_h(X_i-x)(\alpha(X_i) - \alpha_{*x}(X_i))'\Delta^{*}(X_i,Y_i,x)\right|\\
&\displaystyle\leq_{(1)}\norm{(\tilde{\alpha},h\mathrm{vec}(\tilde{\beta}))-(\alpha_0(x),h\mathrm{vec}(\nabla\alpha_0(x)))}_1\bigg\| \frac{1}{N}\sum_{i=1}^NK_h(X_i-x)\left({\Delta^{*}(X_i,Y_i,x)}\atop{ {\rm vec}\left(\Delta^{*}(X_i,Y_i,x)\frac{(X_i-x)'}{h} \right)}\right)\bigg\|_\infty\\
&\displaystyle\leq_{(2)}\bar{C}'\left(\frac{Md}{\sqrt{Nh^{d}}}+Mdh^2\right)sdh^2\bigg(1 \!+\!\frac{\sqrt{1/\delta}\log M}{Nh^d}\!\bigg)
\leq_{(3)} Csdh^2\left(\frac{Md}{\sqrt{Nh^{d}}}+Mdh^2\right),
\end{array}$$
where inequality (1) for $|\Delta q_{12}|$ holds by the fact that $|(X_{ik}-x_k)/h|\leq1$ for any $k\in[d]$ and then using similar argument as for inequality (1) of $|\Delta q_{11}|$. Inequality (2) for $|\Delta q_{12}|$ holds by Proposition \ref{prop:concentration:covariate} for $\delta\geq1/2^M$ and Lemma \ref{Lemma:xi-bound}, and (3) holds since $\delta\geq\log(M)/Nh^d$. $|\Delta q_2|$ is bounded via $\frac{1}{N}\sum_{i=1}^N K_h(X_i-x)\widetilde R(\alpha,\hat\alpha,Y_i)\leq\frac{C}{N}\sum_{i=1}^N K_h(X_i-x)\max_{\alpha \in \mathcal{A}_x(\hat\alpha), \|X_i-x\|\leq h}\| \alpha(X_i)-\alpha_{*x}(X_i)\|^2$. Finally, following similar argument as before, $|-\Delta q_3|$ is bounded by $\frac{C'}{N}\sum_{i=1}^N K_h(X_i-x)[\max_{\alpha \in \mathcal{A}_x(\hat\alpha), \|X_i-x\|\leq h}\|\alpha(X_i)-\alpha_0(X_i)\|^2  + |\widetilde R(\alpha,\hat\alpha,Y_i)|^2]\leq C_2\max_{\alpha \in \mathcal{A}_x(\hat\alpha), \|X_i-x\|\leq h}\|\alpha(X_i)-\alpha_0(X_i)\|^2$ for some universal constant $C_2$ according to Assumption~\ref{ass:smoothness-assumptions}. Thus the lemma holds by combining all the results above.
\end{proof}

\begin{lemma}\label{Lemma:xi-bound}
Suppose Assumptions \ref{ass:iid-min-den}, \ref{ass:smoothness-assumptions} hold, and $sdh^2 = \delta_N\xi$ for a sequence $\delta_N$ that goes zero. Then for $N$ sufficiently large, for some universal constant $C$ independent of $s,N,M,d$, with probability $1-C\epsilon-C\delta-C\delta_{\alpha}$, 
$$\begin{array}{rl}
&\displaystyle\quad\left\|\frac{1}{N}\sum_{i=1}^NK_h(X_i-x)\left[\frac{\nabla_{\alpha}W(\alpha_0(X_i),Y_i)'r_{*x}(X_i)}{1+W(\alpha_0(X_i),Y_i)'r_{*x}(X_i)}-\frac{\nabla_{\alpha}W(\alpha_0(X_i),Y_i)'r_0(X_i)}{1+W(\alpha_0(X_i),Y_i)'r_0(X_i)}\right]\right\|_\infty\\
&\displaystyle\leq Csdh^2\bigg(1 \!+\!\frac{\sqrt{1/\delta}\log M}{Nh^d}\!\bigg).
\end{array}$$
\end{lemma}
\begin{proof}[Proof of Lemma \ref{Lemma:xi-bound}]
Firstly note that $K(\cdot)$ is supported on the unit ball, so we only need to consider the covariates $X_i$ such that $\norm{X_i-x}_2\leq h$. The proof of Theorem \ref{thm:main:covariate} implies that  
$$\norm{r_0(X_i)-r_0(x)-\nabla r_0(x)(X_i-x)}_1\leq \bar{K}_{x,2}sdh^2.$$ 
Furthermore, using H\"older's inequality, for any $k\in[M]$ we have 
$$\left|\nabla_{\alpha_k} W(\alpha_0(X_i),Y_i)^\prime\left(r_0(X_i)-r_{*x}(X_i)\right)\right|\leq \bar{K}_{x,2}\bar{K}_wsdh^2,$$
\begin{equation}\label{eq:feasible-r*}
\begin{array}{rl}
&\quad1+W(\alpha_0(X_i),Y_i)^\prime r_{*x}(X_i)\\
&\geq1+W(\alpha_0(X_i),Y_i)^\prime r_0(X_i)\\
&\quad\displaystyle-|W(\alpha_0(X_i),Y_i)^\prime\left(r_0(X_i)-r_{*x}(X_i)\right)|\\
&\geq_{(a)}\xi-\bar{K}_{x,2}\bar{K}_wsdh^2\\
&\geq_{(b)} C_1^\prime\xi,
\end{array}
\end{equation}
where (a) holds with probability $1-\epsilon$ by Assumption \ref{ass:iid-min-den} and (b) holds with $C_1'\in(0,1)$ for $N$ sufficiently large since $sdh^2=\delta_N\xi$ where $\delta_N\rightarrow0$. Given $i\in[N], k\in[M]$, define $\Delta^{*}(X_i,Y_i,x):=\frac{\nabla_{\alpha} W(\alpha_0(X_i),Y_i)' r_{*x}(X_i)}{1+W(\alpha_0(X_i),Y_i)^\prime r_{*x}(X_i)}-\frac{\nabla_{\alpha} W(\alpha_0(X_i),Y_i)'r_0(X_i)}{1+W(\alpha_0(X_i),Y_i)^\prime r_0(X_i)}$, $\Delta_0r:=\frac{r_{*x}(X_i)}{1+W(\alpha_0(X_i),Y_i)^\prime r_{*x}(X_i)}-\frac{r_0(X_i)}{1+W(\alpha_0(X_i),Y_i)^\prime r_0(X_i)}$. Then 
\begin{equation}\label{eq:target:bound}
\begin{array}{rl}
    &\displaystyle\quad\left\|\frac{1}{N}\sum_{i=1}^NK_h(X_i-x)\Delta^{*}(X_i,Y_i,x)\right\|_\infty\\
    &\displaystyle=\left\|\frac{1}{N}\sum_{i=1}^NK_h(X_i-x)\nabla_{\alpha} W(\alpha_0(X_i),Y_i)'\Delta_0r\right\|_\infty.
\end{array}
\end{equation}
Note that $\Delta_0r=\frac{r_{*x}(X_i)-r_0(X_i)}{(1+W(\alpha_0(X_i),Y_i)'\tilde{r}_i)}-\frac{\tilde{r}_iW(\alpha_0(X_i),Y_i)'(r_{*x}(X_i)-r_0(X_i))}{(1+W(\alpha_0(X_i),Y_i)'\tilde{r}_i)^2}$ for some $\tilde{r}_i$ on the line segment between $r_{*x}(X_i), r_0(X_i)$. Following similarly from \eqref{eq:feasible-r*}, $1+W(\alpha_0(X_i),Y_i)'\tilde{r}_i>C_1'\xi$ for some absolute constant $C_1'$ for $N$ sufficiently large. The condition of this lemma implies $\|\nabla_{\alpha}W(\alpha_0(X_i),Y_i)'(r_{*x}(X_i)-r_0(X_i))\|_{\infty}\leq \bar{K}_w\|r_{*x}(X_i)-r_0(X_i)\|_1\leq\bar{K}_{x,2}\bar{K}_wsdh^2$. Define $\Delta_kr:=\nabla_{\alpha_k}W(\alpha_0(X_i),Y_i)'\Delta_0r$. Assumption~\ref{ass:iid-min-den} and Assumption~\ref{ass:smoothness-assumptions} imply that $|\Delta_kr|\leq\bar{K}sdh^2$ for some universal constant $\bar{K}$. Define $\bar \sigma^2:=\max_{k\in[M]}\sum_{i=1}^N\mathbb{E}\left[K_h(X_i-x)^2\{\Delta_kr\}^2\right]$. So for some universal constant $C_2$, $\bar{\sigma}^2\leq C_2Ns^2d^2h^4/h^d$. Define $\bar{M}:=  \max_{i\in[N],k\in[M]}K_h(X_i-x)|\nabla_{\alpha_k}W(\alpha_0(X_i),Y_i)'\Delta_0r|$. Then $\displaystyle\bar{M}\leq C_2sd/h^{d-2}$.
Lemma 8 of \citet{chernozhukov2015comparison} implies $\mathbb{E}\left[\max_{k\in[M]}\left|\sum_{i=1}^NK_h(X_i-x)\Delta_kr-\mathbb{E}[K_h(X_i-x)\Delta_kr]\right|\right]\leq\bar{C}_1\left(\bar{\sigma}\sqrt{\log M}+\mathbb{E}[\bar{M}^2]^{1/2}\log M\right)$ for some universal constant $\bar{C}_1$. Since $|\Delta_kr|\leq\bar{K}sdh^2$, 
$$\mathbb{E}[\max_{k\in[M]}|\sum_{i=1}^NK_h(X_i-x)\Delta_kr|]\leq\bar{C}_2\left(\bar{\sigma}\sqrt{\log M}+\mathbb{E}[\bar{M}^2]^{1/2}\log M+Nsdh^2\right)$$ 
for some universal constant $\bar{C}_2$. Furthermore using Lemma E.4 of \citet{chernozhukov2017central}, for some universal constant $\bar{C}$, with probability $1-\bar{C}\mathbb{E}[\bar M^2]/t^2$, 
\begin{equation}\label{eq:bound:term-1}
\begin{array}{rl}
&\displaystyle\max_{k\in[M]}\left|\frac{1}{N}\sum_{i=1}^NK_h(X_i-x)\Delta_kr\right|\\
&\displaystyle\leq\frac{2\bar{C}_2\bar\sigma \sqrt{\log M}}{N} + \frac{2\bar{C}_2\mathbb{E}[\bar{M}^2]^{1/2}\log(M)}{N}\\
&\displaystyle\quad+2\bar{C}_2sdh^2 + \frac{t}{N}\\
&\displaystyle \leq\bar{C}sdh^2 + \frac{t}{N}.
\end{array}
\end{equation}
Now we take $t$ in (\ref{eq:bound:term-1}) such that $\bar{C}\mathbb{E}[\bar M^2]/t^2=\delta$, i.e. $t=\sqrt{\bar{C}/\delta}\mathbb{E}[\bar{M}^2]^{1/2}$. So by (\ref{eq:bound:term-1}), with probability $1-\delta$, 
\begin{equation}\label{eq:term-1:bound:new}
\begin{array}{rl}
&\displaystyle\max_{k\in[M]}\left|\frac{1}{N}\sum_{i=1}^NK_h(X_i-x)\Delta_kr\right|\\
&\displaystyle\leq Csdh^2\bigg(1 \!+\!\frac{\sqrt{1/\delta}\log M}{Nh^d}\!\bigg). 
\end{array}
\end{equation}
Recall $\displaystyle\Delta^{*}(X_i,Y_i,x)=\frac{ \nabla_\alpha W(\alpha_0(X_i),Y_i)'r_{*x}(X_i)}{1+W(\alpha_0(X_i),Y_i)'r_{*x}(X_i)} - \frac{ \nabla_\alpha W(\alpha_0(X_i),Y_i)'r_0(X_i)}{1+W(\alpha_0(X_i),Y_i)'r_0(X_i)}$.  (\ref{eq:target:bound}), (\ref{eq:term-1:bound:new}) imply that with probability $1-\delta-\epsilon$, $\displaystyle\left\|\frac{1}{N}\sum_{i=1}^NK_h(X_i-x)\Delta^{*}(X_i,Y_i,x)\right\|_\infty\!\!\!\!\!\leq Csdh^2\bigg(1 \!+\!\frac{\sqrt{1/\delta}\log M}{Nh^d}\!\bigg)$.
\end{proof}

\section{Rates of Convergence for Marginal Probabilities}\label{appendix:marginal}
In this section, we provide rates of convergence for the nuisance parameters or functions, i.e., marginal probabilities. Proposition \ref{prop:concentration:no covariate} and Proposition~\ref{prop:concentration:covariate} provide results the no-covariate case and with-covariate case respectively. The proofs for the following two propositions are standard by applying the concentration inequalities for $\hat{\alpha}$ defined as \eqref{eq:SAA}, $\hat{\alpha}(x)$ defined as \eqref{eq:truncate-local-linear}, $\tilde\beta(x)$ defined as in Section~\ref{sec:marginals}, following from \citep{boucheron2003concentration,einmahl2005uniform,hansen2008uniform,fan1993local}, so we omit them here. 
\begin{proposition}\label{prop:concentration:no covariate} In the case of no covariates, suppose Assumption \ref{ass:iid-min-den} holds. Let $\hat{\alpha}$ and $\mathcal{A}(\hat\alpha)$ be the estimator and confidence set defined in \eqref{eq:SAA} and $\delta \in (0,1)$. Then with probability at least $1-\delta$, we have $\norm{\hat{\alpha}-\alpha_0}_{\infty}\leq c_1\sqrt{\frac{\log(M/\delta)}{N}}$, $\norm{\hat\alpha-\alpha_0}_1\leq 
c_1\left(\frac{M}{\sqrt{N}}+\sqrt{\frac{M\log(1/\delta)}{N}}\right)$, $\norm{\hat\alpha-\alpha_0}_2\leq 
c_1\left(\sqrt{\frac{M}{N}}+\sqrt{\frac{\log(1/\delta)}{N}}\right)$, 
where $c_1$ is an absolute constant independent of $M,N,s$. 
\end{proposition}

\begin{proposition}\label{prop:concentration:covariate}
Suppose Assumption \ref{ass:local-linear} holds and let $x$ be a given data point of in the interior of $\mathcal{X}$. Let $\hat{\alpha}(x)$ be the truncated local linear estimator defined by (\ref{eq:truncate-local-linear}), and let 
$\delta\geq\exp(-\bar{C}_0\{\sqrt{\log M}\sqrt{Nh^d}+Nh^d\})$. Then for some absolute constants $\bar{C}_0$, $\bar{C}_1$ which depend on $x$ while are independent of $N,d,h,s,M$, with probability $1-\delta$, we have\\ $\norm{\hat{\alpha}(x)-\alpha_0(x)}_1\leq\bar{C}_0\left(\frac{M}{\sqrt{Nh^d}}+Mdh^2+\sqrt{M\left(\frac{1}{Nh^d}+h^4d^2\right)\log(\frac{1}{\delta})}\right)$, \\
$\norm{\hat{\alpha}(x)-\alpha_0(x)}_2\!\leq\!\bar{C}_0\left(\sqrt{\frac{M}{Nh^d}}+\sqrt{M}dh^2+\sqrt{\left(\frac{1}{Nh^d}+h^4d^2\right)\log(\frac{1}{\delta})}\right)$, \\
$\norm{\hat{\alpha}(x)-\alpha_{0}(x)}_{\infty}\leq\bar{C}_0\left(dh^2+\sqrt{\frac{\log(M/\delta)}{Nh^d}}\right)$, \\
$\norm{\tilde{\beta}(x)-\nabla\alpha_0(x)}_{1,1}\leq\bar{C}_1d\left(\frac{M}{\sqrt{Nh^{d+2}}}+\!\!Mdh^2\!\!+\!\!\sqrt{M\left(\frac{1}{Nh^{d+2}}+h^4d^2\right)\log(\frac{1}{\delta})}\right)$,\\
$\norm{\tilde{\beta}(x)-\nabla\alpha_0(x)}_{2,2}\leq\bar{C}_1\sqrt{d}\left(\sqrt{\frac{M}{Nh^{d+2}}}+\sqrt{M}dh^2+\!\!\sqrt{\left(\frac{1}{Nh^{d+2}}+h^4d^2\right)\log(\frac{1}{\delta})}\right)$.
\end{proposition}

\section{Proof of Theorem \ref{thm:causal-inference:AIPW}}\label{sec:results:ATE}
For any fold $k\in[K]$ and any two treatment levels $t,t'$, define 
$$\begin{array}{rcl}
\displaystyle\hat{\eta}_{(t)}^{\mathcal{I}_k}&:=&\displaystyle\frac{1}{|\mathcal{I}_k|}\sum_{i\in\mathcal{I}_k}\left(\hat{\mu}_{(t)}^{-k}(X_i)+\mathbf{1}\{T_i=t\}\frac{O_i-\hat{\mu}_{(t)}^{-k}(X_i)}{\hat{e}_t^{-k}(X_i)}\right),\\
\displaystyle\hat{\eta}_{(t')}^{\mathcal{I}_k}&:=&\displaystyle\frac{1}{|\mathcal{I}_k|}\sum_{i\in\mathcal{I}_k}\left(\hat{\mu}_{(t')}^{-k}(X_i)+\mathbf{1}\{T_i=t'\}\frac{O_i-\hat{\mu}_{(t')}^{-k}(X_i)}{\hat{e}_{t'}^{-k}(X_i)}\right),\\
\displaystyle\hat{\eta}_{(t)}^{\mathcal{I}_k,*}&:=&\displaystyle\frac{1}{|\mathcal{I}_k|}\sum_{i\in\mathcal{I}_k}\left({\mu}_{(t)}(X_i)+\mathbf{1}\{T_i=t\}\frac{O_i-{\mu}_{(t)}(X_i)}{{e}_t(X_i)}\right),\\
\displaystyle\hat{\eta}_{(t')}^{\mathcal{I}_k,*}&:=&\displaystyle\frac{1}{|\mathcal{I}_k|}\sum_{i\in\mathcal{I}_k}\left({\mu}_{(t')}(X_i)+\mathbf{1}\{T_i=t'\}\frac{O_i-{\mu}_{(t')}(X_i)}{{e}_{t'}(X_i)}\right).
\end{array}$$
Define $\hat{\tau}^{\mathcal{I}_k}(t,t'):=\hat{\eta}_{(t)}^{\mathcal{I}_k}-\hat{\eta}_{(t')}^{\mathcal{I}_k}$, $\hat{\tau}^{\mathcal{I}_k,*}(t,t'):=\hat{\eta}_{(t)}^{\mathcal{I}_k,*}-\hat{\eta}_{(t')}^{\mathcal{I}_k,*}$, $\hat{\tau}_{\textrm{AIPW}}^*:=\sum_{k\in[K]}\frac{|\mathcal{I}_k|}{N}\hat{\tau}^{\mathcal{I}_k,*}(t,t')$. Note that $\hat{\tau}_{\textrm{AIPW}}=\sum_{k\in[K]}\frac{|\mathcal{I}_k|}{N}\hat{\tau}^{\mathcal{I}_k}(t,t')$, so $\hat{\tau}_{\textrm{AIPW}}^*=\frac{1}{N}\sum_{i=1}^N \mu_{(t)}(X_i)-\mu_{(t')}(X_i)+ \frac{1\{T_i=t\}}{e_t(X_i)}(O_i - \mu_{(t)}(X_i)) - \frac{1\{T_i=t'\}}{e_{t'}(X_i)}(O_i - \mu_{(t')}(X_i))$. To show $\sqrt{N}(\hat{\tau}_{\textrm{AIPW}}-\hat{\tau}_{\textrm{AIPW}}^*)=\mathrm{o}_{\mathrm{P}}(1)$, it suffices to show that for any $k\in[K]$, $\sqrt{N}(\hat{\tau}^{\mathcal{I}_k}(t,t')-\hat{\tau}^{\mathcal{I}_k,*}(t,t'))=\mathrm{o}_{\mathrm{P}}(1)$. Hence it suffices to show that for any $t\in\{0,1\}^M$, we have $\sqrt{N}(\hat{\eta}_{(t)}^{\mathcal{I}_k}-\hat{\eta}_{(t)}^{\mathcal{I}_k,*})=\mathrm{o}_{\mathrm{P}}(1)$. Note that  $\hat{\eta}_{(t)}^{\mathcal{I}_k}-\hat{\eta}_{(t)}^{\mathcal{I}_k,*}=(a)+(b)+(c)$, where $(a)=\frac{1}{|\mathcal{I}_k|}\sum_{i\in\mathcal{I}_k}\big\{\big(\hat{\mu}_{(t)}^{-k}(X_i)-\mu_t(X_i)\big)\big(1-\frac{\mathbf{1}\{T_i=t\}}{e_t(X_i)}\big)\big\}$, $(b)=\frac{1}{|\mathcal{I}_k|}\sum_{i\in\mathcal{I}_k}\mathbf{1}\{T_i=t\}\left((O_i-\mu_{(t)}(X_i))\left(\frac{1}{\hat{e}_t^{-k}(X_i)}-\frac{1}{e_t(X_i)}\right)\right)$, $(c)=-\frac{1}{|\mathcal{I}_k|}\sum_{i\in\mathcal{I}_k}\mathbf{1}\{T_i=t\}\left(\left(\hat{\mu}_{(t)}^{-k}(X_i)-\mu_t(X_i)\right)\left(\frac{1}{\hat{e}_t^{-k}(X_i)}-\frac{1}{e_t(X_i)}\right)\right)$. Note that 
$$\begin{array}{rl}
&\displaystyle\mathbb{E}\bigg[\bigg(\frac{1}{|\mathcal{I}_k|}\sum_{i\in\mathcal{I}_k}\bigg\{\bigg(\hat{\mu}_{(t)}^{-k}(X_i)-\mu_t(X_i)\bigg)\bigg(1-\frac{\mathbf{1}\{T_i=t\}}{e_t(X_i)}\bigg)\bigg\}\bigg)^2\bigg]\\
&\displaystyle=\frac{1}{|\mathcal{I}_k|}\mathbb{E}\left[\mathbb{E}\left[\left(\hat{\mu}_{(t)}^{-k}(X_i)-\mu_t(X_i)\right)^2\left(\frac{1}{e_t(X_i)}-1\right)\big|\mathcal{I}^{-k}\right]\right]\\
&\displaystyle\leq\frac{\mathbb{E}\big[\big(\hat{\mu}_{(t)}^{-k}(X_i)-\mu_t(X_i)\big)^2\big]/(c|\mathcal{I}_k|)}{\inf_{x\in\mathcal{X}}\prod_{j=1}^M \alpha_{0j}^{t_j}(x)(1-\alpha_{0j}(x))^{1-t_j}}\leq\frac{\tilde{C}\epsilon_N^2N^{-3/2}}{\inf_{x\in\mathcal{X}}\prod_{j=1}^M \alpha_{0j}^{t_j}(x)(1-\alpha_{0j}(x))^{1-t_j}},
\end{array}$$
where the last inequality follows from condition (iii) of Assumption~\ref{ass:causal:extra}. Further, define 
$$\Delta E:=\mathbb{E}\left[\left(\frac{1}{|\mathcal{I}_k|}\sum_{i\in\mathcal{I}_k}\mathbf{1}\{T_i=t\}\bigg(O_i-\mu_{(t)}(X_i)\bigg)\bigg(\frac{1}{\hat{e}_t^{-k}(X_i)}-\frac{1}{e_t(X_i)}\bigg)\right)^2\right].$$ Conditional on the out-of-fold sample, the fold-k summands are independent and have mean zero since $\mathbb{E}[O_i-\mu_{(t)}(X_i)|X_i,T_i=t]=0$, so
it is straightforward to check that $\Delta E\leq\frac{C}{|\mathcal{I}_k|}\mathbb{E}\left[\left(1-\frac{e_t(X_i)}{\hat{e}_t^{-k}(X_i)}\right)^2\frac{1}{e_t(X_i)}\right]$ for some absolute constant $C$. For any $x\in\mathcal{X}$, $\hat{e}_t(x)-e_t(x)=\textrm{(I)+(II)+(III)}$, where 
$$\begin{array}{rcl}
\textrm{(I)}&=&\displaystyle\left[\Big(1+W(\hat\alpha(x),t)'\hat r(x,x)\Big)-\Big(1+W(\alpha_0(x),t)'\hat r(x,x)\Big)\right]\prod_{j=1}^M\hat\alpha_j(x)^{t_j}(1-\hat\alpha_j(x))^{1-t_j},\\
\textrm{(II)}&=&\displaystyle\left[\Big(1+W(\alpha_0(x),t)'\hat r(x,x)\Big)-\Big(1+W(\alpha_0(x),t)'r_0(x)\Big)\right]\prod_{j=1}^M\hat\alpha_j(x)^{t_j}(1-\hat\alpha_j(x))^{1-t_j},\\
\textrm{(III)}&=&\displaystyle\Big(1+W(\alpha_0(x),t)'r_0(x)\Big)\left[\prod_{j=1}^M\hat\alpha_j(x)^{t_j}(1-\hat\alpha_j(x))^{1-t_j}-\prod_{j=1}^M\alpha_{0j}(x)^{t_j}(1-\alpha_{0j}(x))^{1-t_j}\right].
\end{array}$$
For any $j\in[M]$, define $\tilde{\Delta}_j\alpha_0(x):=\alpha_{0j}(x)^{t_j}(1-\alpha_{0j}(x))^{1-t_j}$, $\tilde{\Delta}_j\hat{\alpha}(x):=\hat{\alpha}_j(x)^{t_j}(1-\hat{\alpha}_j(x))^{1-t_j}$, then $\prod_{j=1}^M\tilde{\Delta}_j\hat{\alpha}(x)-\prod_{j=1}^M\tilde{\Delta}_j\alpha_0(x)=[\tilde{\Delta}_1\hat{\alpha}(x)-\tilde{\Delta}_1\alpha_0(x)]\prod_{j=2}^M\tilde{\Delta}_j\hat{\alpha}(x)+\tilde{\Delta}_1\alpha_0(x)\bigg\{\prod_{j=2}^M\tilde{\Delta}_j\hat{\alpha}(x)-\prod_{j=2}^M\tilde{\Delta}_j\alpha_0(x)\bigg\}$. Denote $\Delta_k := \prod_{j=k}^M\hat{\alpha}_j(x)^{t_j}(1-\hat{\alpha}_j(x))^{1-t_j}-\prod_{j=k}^M\alpha_{0j}(x)^{t_j}(1-\alpha_{0j}(x))^{1-t_j}$, $\hat{\delta}_j:=\hat{\alpha}_j(x)^{t_j}(1-\hat{\alpha}_j(x))^{1-t_j}-\alpha_{0j}(x)^{t_j}(1-\alpha_{0j}(x))^{1-t_j}$, $\hat{\pi}_k:=\prod_{j=k}^M\hat{\alpha}_j(x)^{t_j}(1-\hat{\alpha}_j(x))^{1-t_j},m_j:=\alpha_{0j}(x)^{t_j}(1-\alpha_{0j}(x))^{1-t_j}$. So by expanding the recursive formula of $\prod_{j=1}^M\tilde{\Delta}_j\hat{\alpha}(x)-\prod_{j=1}^M\tilde{\Delta}_j\alpha_0(x)$, we have $\Delta_1=\prod_{j=1}^M\hat{\alpha}_j(x)^{t_j}(1-\hat{\alpha}_j(x))^{1-t_j}-\prod_{j=1}^M\alpha_{0j}(x)^{t_j}(1-\alpha_{0j}(x))^{1-t_j}=\hat{\delta}_1\hat{\pi}_2+m_1\Delta_2=\hat{\delta}_1\hat{\pi}_2+m_1\hat{\delta}_2\hat{\pi}_3+m_1m_2\Delta_3=\ldots=\hat{\delta}_1\hat{\pi}_2+\sum_{j=2}^{M-1}\hat{\delta}_j\hat{\pi}_{j+1}\prod_{k=1}^{j-1}m_k+\hat{\delta}_M\prod_{k=1}^{M-1}m_k$. By noting that $0\leq\hat{\pi}_k\leq 1$ and $0\leq m_j\leq 1$ for any $j,k\in[M]$, and recall that $t_j\in\{0,1\}$, thus $|\Delta_1|\leq\sum_{j=1}^M|\hat{\delta}_j|=\sum_{j=1}^M|\hat{\alpha}_j(x)-\alpha_{0j}(x)|=\norm{\hat{\alpha}(x)-\alpha_0(x)}_1\leq CMdh^2+C\frac{M}{\sqrt{Nh^d}}$ with probability $1-\delta$ for $\delta\geq\max\{\exp(-C\{\sqrt{\log M}\sqrt{Nh^d}+Nh^d\}),1/2^M\}$ for some absolute constant $C$ according to Proposition~\ref{prop:concentration:covariate}. The above inequality and Taylor expansion imply $|\textrm{(I)}|\leq |(\hat{\alpha}(x)-\alpha_0(x))'\nabla_\alpha W(\tilde{\alpha}(x),t)'\hat{r}(x,x)|\prod_{j=1}^M\hat\alpha_j(x)^{t_j}(1-\hat\alpha_j(x))^{1-t_j}$. Since $\prod_{j=1}^M\hat\alpha_j(x)^{t_j}(1-\hat\alpha_j(x))^{1-t_j}\leq\prod_{j=1}^M\alpha_{0j}(x)^{t_j}(1-\alpha_{0j}(x))^{1-t_j}+\norm{\hat{\alpha}(x)-\alpha_0(x)}_1$, 
$$\begin{array}{rl}
|\textrm{(I)}|&\displaystyle\leq_{(1)} C'\left(\norm{\hat{\alpha}(x)-\alpha_0(x)}_1^2+\norm{\hat{\alpha}(x)-\alpha_0(x)}_1\sup_{x\in\mathcal{X}}\prod_{j=1}^M\alpha_{0j}(x)^{t_j}(1-\alpha_{0j}(x))^{1-t_j}\right)\\
&\displaystyle\leq C'\left(\frac{M}{\sqrt{Nh^d}}+Mdh^2\right)^2\!\!\!+C'\left(\frac{M}{\sqrt{Nh^d}}+Mdh^2\right)\sup_{x\in\mathcal{X}}\prod_{j=1}^M\alpha_{0j}(x)^{t_j}(1-\alpha_{0j}(x))^{1-t_j}\leq_{(2)}C\psi_N,
\end{array}$$ 
here (1) follows from Assumption \ref{ass:smoothness-assumptions}, the fact that $\hat{r}(x,x)=\hat{r}(x,x)-r_0(x)+r_0(x)$ and $\hat{r}(x,x)-r_0(x)$ is negligible by our theoretical result for the with-covariate estimator; (2) follows from (\ref{consistency:ATE:condition}) and (\ref{consistency:ATE:2}). To control (II), note $|\textrm{(II)}|\leq |W(\alpha_0(x),t)'(\hat{r}(x,x)-r_0(x))|\prod_{j=1}^M\hat\alpha_j(x)^{t_j}(1-\hat\alpha_j(x))^{1-t_j}$, where the right hand side is equal to $|W(\alpha_0(x),t)'(\hat{r}(x,x)-r_0(x))|\left[\prod_{j=1}^M\alpha_{0j}(x)^{t_j}(1-\alpha_{0j}(x))^{1-t_j}+\Delta_1\right]$. Let $\delta\geq\max\{\exp(-C'\{\sqrt{\log(M)Nh^d}+Nh^d\}),1/2^M\}$. Then according to Proposition~\ref{prop:concentration:covariate}, with probability $1-\delta$, we have $|\Delta_1|\leq C'Mdh^2+C'\frac{M}{\sqrt{Nh^d}}$. Further using H\"older's inequality, Corollary \ref{cor:FO:covariate}, Assumption~\ref{ass:smoothness-assumptions} and $r_0(x)=r_{*x}(x)$, with probability $1-\delta$, we have 
$$|\textrm{(II)}|\leq C_1'\sqrt{s}\left(\!\!\sqrt{\frac{sM}{Nh^d}}+(\sqrt{M}+s)dh^2\right)\!\left(\prod_{j=1}^M\alpha_{0j}(x)^{t_j}(1-\alpha_{0j}(x))^{1-t_j}\!+\!Mdh^2\!+\!\frac{M}{\sqrt{Nh^d}}\right)$$ for some universal constant $C_1'$ and $\delta\geq\log(M)/(\delta_NNh^d)$. Additionally, following from (\ref{consistency:ATE:condition}) and \eqref{consistency:ATE:2}, $\textrm{(II)}\leq C_2\psi_N$ for an absolute constant $C_2$. Further by Assumption \ref{ass:iid-min-den}, we have $|\textrm{(III)}|\leq C_3\norm{\hat{\alpha}(x)-\alpha_0(x)}_1\leq C_3\psi_N$ for an absolute constant $C_3$, hence $\displaystyle\sup_{x\in\mathcal{X}}|\hat{e}_t(x)-e_t(x)|\leq C\psi_N$, where $C$ is some constant independent of $N,d,h,M,s$. So the upper bound for $\Delta E$ derived earlier implies $$\begin{array}{rl}
&\quad\displaystyle\mathbb{E}\left[\left(\frac{1}{|\mathcal{I}_k|}\sum_{i\in\mathcal{I}_k}\mathbf{1}\{T_i=t\}(O_i-\mu_{(t)}(X_i))\left(\frac{1}{\hat{e}_t^{-k}(X_i)}-\frac{1}{e_t(X_i)}\right)\right)^2\right]\\
&\displaystyle\leq\frac{CN^{-1}\psi_N^2}{[\inf_{x\in\mathcal{X}}\prod_{j=1}^M \alpha_{0j}^{t_j}(x)(1-\alpha_{0j}(x))^{1-t_j}]^3}.
\end{array}$$ 
Recall that $\hat{\eta}_{(t)}^{\mathcal{I}_k}-\hat{\eta}_{(t)}^{\mathcal{I}_k,*}=(a)+(b)+(c)$, where (a), (b), (c) are defined as before. Hence by Chebyshev's inequality, for an absolute constant $\tilde{C}$, we have 
$$\mathbb{P}(\sqrt{N}|(a)|>\tilde{C} N^{-\gamma})\leq\frac{\tilde{C}\epsilon_N^2}{\inf_{x\in\mathcal{X}}\prod_{j=1}^M \alpha_{0j}^{t_j}(x)(1-\alpha_{0j}(x))^{1-t_j}}\leq\frac{\tilde{C}\epsilon_N^2}{c^M},$$ $$\mathbb{P}(\sqrt{N}|(b)|>\tilde{C}\sqrt{\psi_N})\leq\frac{\tilde{C}\psi_N}{[\inf_{x\in\mathcal{X}}\prod_{j=1}^M \alpha_{0j}^{t_j}(x)(1-\alpha_{0j}(x))^{1-t_j}]^3}\leq\frac{\tilde{C}\psi_N}{c^{3M}}.$$ Further, 
$$\begin{array}{rl}
&\displaystyle\quad\frac{1}{|\mathcal{I}_k|}\sum_{i\in\mathcal{I}_k}\mathbf{1}\{T_i=t\}\left(\left(\hat{\mu}_{(t)}^{-k}(X_i)-\mu_t(X_i)\right)\left(\frac{1}{\hat{e}_t^{-k}(X_i)}-\frac{1}{e_t(X_i)}\right)\right)\\
&\displaystyle\leq\sqrt{\frac{1}{|\mathcal{I}_k|}\sum_{i:i\in\mathcal{I}_k,T_i=t}\left(\hat{\mu}_{(t)}^{-k}(X_i)-\mu_t(X_i)\right)^2}\sqrt{\frac{1}{|\mathcal{I}_k|}\sum_{i:i\in\mathcal{I}_k,T_i=t}\left(\frac{1}{\hat{e}_t^{-k}(X_i)}-\frac{1}{e_t(X_i)}\right)^2}\\
&\displaystyle\leq_{(1)} \frac{\tilde{C}\epsilon_NN^{-1/2+\gamma}\psi_N}{[\inf_{x\in\mathcal{X}}\prod_{j=1}^M \alpha_{0j}(x)^{t_j}(1-\alpha_{0j}(x))^{1-t_j}]^{2}},\mbox{where (1) holds with probability $1-\epsilon_N$}
\end{array}$$
by Assumption~\ref{ass:causal:extra}. Thus $\sqrt{N}|(c)|\leq\frac{\tilde{C}\epsilon_NN^{\gamma}\psi_N}{[\inf_{x\in\mathcal{X}}\prod_{j=1}^M \alpha_{0j}(x)^{t_j}(1-\alpha_{0j}(x))^{1-t_j}]^2}$ with probability $1-\epsilon_N$, where $N^{\gamma}\psi_N=\mathrm{o}(1)$. Since $M=\mathrm{o}(\min\{\log(1/\psi_N),\log(1/\epsilon_N)\})$, so $\frac{\epsilon_N^2}{c^M}=\mathrm{o}(1), \frac{\psi_N}{c^{3M}}=\mathrm{o}(1), \frac{\epsilon_N}{c^{2M}}=\mathrm{o}(1)$, hence for any $k\in[K]$ we have $\sqrt{N}\left|\hat{\eta}_{(t)}^{\mathcal{I}_k}-\hat{\eta}_{(t)}^{\mathcal{I}_k,*}\right|=\mathrm{o}_p(1)$. Thus the result holds. 
\proofend

\end{document}